\newtheorem{theorem}{Theorem}[section]
\newtheorem{proposition}[theorem]{Proposition}
\newtheorem{lemma}[theorem]{Lemma}
\newtheorem{corollary}[theorem]{Corollary}
\newtheorem{definition}[theorem]{Definition}
\newtheorem{example}[theorem]{Example}
\newtheorem{remark}[theorem]{Remark}
\newtheorem{assumption}[theorem]{Assumption}
\numberwithin{equation}{section}
\begin{document}
\title[Quasi-linear elliptic boundary value problems at resonance]{
Nonlinear elliptic boundary value problems at resonance with nonlinear
Wentzell boundary conditions}
\author{Ciprian G. Gal}
\address{C. G. Gal, Department of Mathematics, University of
Missouri,Columbia, MO 65211 (USA)}
\email{galc@missouri.edu}
\author{Mahamadi Warma}
\address{M.~Warma, University of Puerto Rico, Department of Mathematics (Rio
Piedras Campus), PO Box 70377 San Juan PR 00936-8377 (USA)}
\email{mjwarma@gmail.com, warma@uprrp.edu}
\keywords{Nonlinear Wentzell-Robin boundary conditions, necessary and
sufficient conditions for existence of weak solutions, subdifferentials, a
priori estimates, boundary value problems at resonance.}
\subjclass[2010]{35J65, 35D30, 35B45}
\dedicatory{Dedicated to the 70th birthday of Jerome A. Goldstein}
\date{\today }

\begin{abstract}
In the first part of the article, we give necessary and sufficient
conditions for the solvability of a class of nonlinear elliptic boundary
value problems with nonlinear boundary conditions involving the $q$%
-Laplace-Beltrami operator. In the second part, we give some additional
results on existence and uniqueness and we study the regularity of the weak
solutions for these classes of nonlinear problems. More precisely, we show
some global a priori estimates for these weak solutions in an $L^{\infty }$%
-setting.
\end{abstract}

\maketitle

\section{Introduction}

Let $\Omega \subset \mathbf{R}^{N},$ $N\geq 1,$ be a bounded domain with a
Lipschitz boundary $\partial \Omega $ and consider the following nonlinear
boundary value problem with nonlinear second order boundary conditions: 
\begin{equation}
\begin{cases}
-\Delta _{p}u+\alpha _{1}\left( u\right) =f\left( x\right) , & \text{ in }%
\Omega , \\ 
&  \\ 
b\left( x\right) \left\vert \nabla u\right\vert ^{p-2}\partial _{\mathbf{n}%
}u-\rho b\left( x\right) \Delta _{q,\Gamma }u+\alpha _{2}\left( u\right)
=g\left( x\right) , & \text{ on }\partial \Omega ,%
\end{cases}
\label{1.1}
\end{equation}%
where $b\in L^{\infty }\left( \partial \Omega \right) ,$ $b(x)\geq b_{0}>0,$
for some constant $b_{0}$, $\rho $ is either $0$ or $1,$ and $\alpha _{1},$ $%
\alpha _{2}\in C\left( \mathbb{R},\mathbb{R}\right) $ are monotone
nondecreasing functions such that $\alpha _{i}\left( 0\right) =0$. Moreover, 
$\Delta _{p}u=\mbox{div}(\left\vert \nabla u\right\vert ^{p-2}\nabla u)$ is
the $p$-Laplace operator, $p\in \left( 1,+\infty \right) $ and $f\in
L^{2}\left( \Omega ,dx\right) ,$ $g\in L^{2}(\partial \Omega ,\sigma )$ are
given real-valued functions. Here, $dx$ denotes the usual $N$-dimensional
Lebesgue measure in $\Omega $ and $\sigma $ denotes the restriction to $%
\partial \Omega $ of the $(N-1)$-dimensional Hausdorff measure. Recall that $%
\sigma $ coincides with the usual Lebesgue surface measure since $\Omega $
has a Lipschitz boundary, and $\partial _{\mathbf{n}}u$ denotes the normal
derivative of $u$ in direction of the outer normal vector $\overrightarrow{%
\mathbf{n}}$. Furthermore, $\Delta _{q,\Gamma }$ is defined as the
generalized $q$-Laplace-Beltrami operator on $\partial \Omega ,$ that is, $%
\Delta _{q,\Gamma }u=\mbox{div}_{\Gamma }(\left\vert \nabla _{\Gamma
}u\right\vert ^{q-2}\nabla _{\Gamma }u),$ $q\in \left( 1,+\infty \right) $.
In particular, $\Delta _{2}=\Delta $ and $\Delta _{2,\Gamma }=\Delta
_{\Gamma }$ become the well-known Laplace and Laplace-Beltrami operators on $%
\Omega $ and $\partial \Omega ,$ respectively. Here, for any real valued
function $v,$%
\begin{equation*}
\mbox{div}_{\Gamma }v=\sum_{i=1}^{N-1}\partial _{\tau _{i}}v,
\end{equation*}%
where $\partial _{\tau _{i}}v$ denotes the directional derivative of $v$
along the tangential directions $\tau _{i}$ at each point on the boundary,
whereas $\nabla _{\Gamma }v=\left( \partial _{\tau _{1}}v,...,\partial
_{\tau _{N-1}}v\right) $ denotes the tangential gradient at $\partial \Omega 
$. It is worth mentioning again that when $\rho =0$ in (\ref{1.1}), the
boundary conditions are of lower order than the order of the $p$ -Laplace
operator, while for $\rho =1,$ we deal with boundary conditions which have
the same differential order as the operator acting in the domain $\Omega .$
Such boundary conditions arise in many applications, such as
phase-transition phenomena (see, e.g., \cite{GM, GM2} and the references
therein) and have been studied by several authors (see, e.g., \cite{ADN,
GGGR, H, Pe, V}).\newline

In a recent paper \cite{GGGR}, the authors have formulated necessary and
sufficient conditions for the solvability of (\ref{1.1}) when $p=q=2,$ by
establishing a sort of "nonlinear Fredholm alternative" for such elliptic
boundary value problems. We shall now state their main result. Defining two
real parameters $\lambda _{1},$ $\lambda _{2}\in \mathbb{R}_{+}$ by 
\begin{equation}
\lambda _{1}=\int_{\Omega }dx,\text{ }\lambda _{2}=\int_{\partial \Omega }%
\frac{d\sigma }{b},  \label{1.2}
\end{equation}%
this result reads that a necessary condition for the existence of a weak
solution of (\ref{1.1}) is that 
\begin{equation}
\int_{\Omega }f\left( x\right) dx+\int_{\partial \Omega }g\left( x\right) 
\frac{d\sigma }{b\left( x\right) }\in \left( \lambda _{1}\mathcal{R}\left(
\alpha _{1}\right) +\lambda _{2}\mathcal{R}\left( \alpha _{2}\right) \right)
,  \label{1.3}
\end{equation}%
while a sufficient condition is 
\begin{equation}
\int_{\Omega }f\left( x\right) dx+\int_{\partial \Omega }g\left( x\right) 
\frac{d\sigma }{b\left( x\right) }\in \mbox{int}\left( \lambda _{1}\mathcal{R%
}\left( \alpha _{1}\right) +\lambda _{2}\mathcal{R}\left( \alpha _{2}\right)
\right) ,  \label{1.4}
\end{equation}%
where $\mathcal{R}(\alpha _{j})$ denotes the range of $\alpha _{j}$, $j=1,2$
and $\mbox{int}(G)$ denotes the interior of the set $G$.

Relation (\ref{1.3}) turns out to be both necessary and sufficient if either
of the sets $\mathcal{R}\left( \alpha _{1}\right) $ or $\mathcal{R}\left(
\alpha _{2}\right) $ is an open interval. This particular result was
established in \cite[Theorem 3]{GGGR}, by employing methods from convex
analysis involving subdifferentials of convex, lower semicontinuous
functionals on suitable Hilbert spaces. As an application of our results, we
can consider the following boundary value problem%
\begin{equation}
\left\{ 
\begin{array}{ll}
-\Delta u+\alpha _{1}\left( u\right) =f\left( x\right) , & \text{in }\Omega ,
\\ 
b\left( x\right) \partial _{n}u=g\left( x\right) , & \text{on }\partial
\Omega ,%
\end{array}%
\right.  \label{1.5}
\end{equation}%
which is only a special case of (\ref{1.1}) (i.e., $\rho =0,$ $\alpha
_{2}\equiv 0$ and $p=2$). According to \cite[Theorem 3]{GGGR} (see also %
\eqref{1.4}), this problem has a weak solution if 
\begin{equation}
\int_{\Omega }f\left( x\right) dx+\int_{\partial \Omega }g\left( x\right) 
\frac{d\sigma }{b\left( x\right) }\in \mbox{int}\left( \lambda _{1}\mathcal{R%
}\left( \alpha _{1}\right) \right) ,  \label{1.6}
\end{equation}%
which yields the result of Landesman and Lazer \cite{LL} for $g\equiv 0$.
This last condition is both necessary and sufficient when the interval $%
\mathcal{R}\left( \alpha _{1}\right) $ is open. This was put into an
abstract context and significantly extended by Brezis and Haraux \cite{BH}.
Their work was much further extended by Brezis and Nirenberg \cite{BN}. The
goal of the present article is comparable to that of \cite{GGGR} since we
want to establish similar conditions to (\ref{1.4}) and (\ref{1.6})\ for the
existence of solutions to (\ref{1.1}) when $p,q\neq 2,$ with main emphasis
on the generality of the boundary conditions.

Recall that $\lambda _{1}$ and $\lambda _{2}$ are given by (\ref{1.2}). Let $%
\mathbb{I}$ be the interval $\lambda _{1}\mathcal{R}\left( \alpha
_{1}\right) +\lambda _{2}\mathcal{R}\left( \alpha _{2}\right) .$ Our first
main result is as follows (see Section 4 also).

\begin{theorem}
\label{main*}Let $\alpha _{j}:\mathbb{R}\rightarrow \mathbb{R}$ $(j=1,2)$ be
odd, monotone nondecreasing, continuous function such that $\alpha _{j}(0)=0$%
. Assume that the functions $\Lambda _{j}(t):=\int_{0}^{|t|}\alpha _{j}(s)ds$
satisfy%
\begin{equation}
\Lambda _{j}(2t)\leq C_{j}\Lambda _{j}(t),\;\mbox{ for all }\;t\in \mathbb{R}%
,
\end{equation}%
for some constants $C_{j}>1$, $j=1,2$. If $u$ is a weak solution of (\ref%
{1.1}) (in the sense of Definition \ref{def-weak-sol} below), then%
\begin{equation}
\int_{\Omega }f\left( x\right) dx+\int_{\partial \Omega }g\left( x\right) 
\frac{d\sigma }{b\left( x\right) }\in \mathbb{I}.  \label{1.8}
\end{equation}%
Conversely, if%
\begin{equation}
\int_{\Omega }f\left( x\right) dx+\int_{\partial \Omega }g\left( x\right) 
\frac{d\sigma }{b\left( x\right) }\in int\left( \mathbb{I}\right) ,
\label{1.8bis}
\end{equation}%
then (\ref{1.1}) has a weak solution.
\end{theorem}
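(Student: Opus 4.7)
I would test the weak formulation of (\ref{1.1}) with $v\equiv 1$. The principal-part contributions $\int_\Omega|\nabla u|^{p-2}\nabla u\cdot\nabla v\,dx$ and $\rho\int_{\partial\Omega}|\nabla_\Gamma u|^{q-2}\nabla_\Gamma u\cdot\nabla_\Gamma v\,d\sigma$ both vanish, leaving
\begin{equation*}
\int_\Omega\alpha_1(u)\,dx+\int_{\partial\Omega}\frac{\alpha_2(u)}{b}\,d\sigma=\int_\Omega f\,dx+\int_{\partial\Omega}\frac{g}{b}\,d\sigma.
\end{equation*}
Because $\alpha_j$ is continuous, odd, and monotone, $\mathcal{R}(\alpha_j)$ is an interval symmetric about $0$, either $(-M_j,M_j)$ or $[-M_j,M_j]$ with $M_j\in(0,+\infty]$. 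A direct averaging argument (in the bounded open case, $\alpha_j(u)<M_j$ a.e.\ forces $\lambda_j^{-1}\!\int\alpha_j(u)\,d\mu_j<M_j$ strictly, symmetrically for the lower bound; otherwise the containment is automatic) yields $\lambda_j^{-1}\!\int\alpha_j(u)\,d\mu_j\in\mathcal{R}(\alpha_j)$ for the appropriate measure $\mu_j$, and summing establishes (\ref{1.8}).

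\textbf{Sufficient direction.} I would proceed by penalization and compactness. Set $\alpha_j^\varepsilon(t):=\alpha_j(t)+\varepsilon t$, which is odd, strictly monotone, and surjective. The perturbed problem admits a unique weak solution $u_\varepsilon$, obtained as the minimizer of the resulting strictly convex, coercive energy on an Orlicz--Sobolev-type energy space; reflexivity of the zero-order Orlicz pieces is guaranteed by the $\Delta_2$-condition imposed on $\Lambda_j$.

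\emph{The heart of the proof} is a uniform-in-$\varepsilon$ estimate on $u_\varepsilon$. Testing with $u_\varepsilon$ and applying Young's inequality controls $\|\nabla u_\varepsilon\|_{L^p(\Omega)}$ (and, if $\rho=1$, $\|\nabla_\Gamma u_\varepsilon\|_{L^q(\partial\Omega)}$). Writing $u_\varepsilon=c_\varepsilon+w_\varepsilon$ with $c_\varepsilon:=(\lambda_1+\lambda_2)^{-1}\bigl(\int_\Omega u_\varepsilon\,dx+\int_{\partial\Omega}u_\varepsilon\,d\sigma/b\bigr)$ and $\int_\Omega w_\varepsilon\,dx+\int_{\partial\Omega}w_\varepsilon\,d\sigma/b=0$, a Poincar\'e--Wirtinger-type inequality bounds $w_\varepsilon$ uniformly in the relevant Sobolev/trace norm. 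The \textbf{main obstacle} is to bound $|c_\varepsilon|$. Arguing by contradiction: if $c_\varepsilon\to+\infty$ along a subsequence, then compactness of the embedding forces $w_\varepsilon\to w_0$ a.e.\ (up to a subsequence), so $u_\varepsilon\to+\infty$ a.e., hence $\alpha_j(u_\varepsilon)\to\sup\mathcal{R}(\alpha_j)$. The $\Delta_2$-condition supplies the uniform integrability required to pass to the limit in
\begin{equation*}
\int_\Omega\alpha_1^\varepsilon(u_\varepsilon)\,dx+\int_{\partial\Omega}\frac{\alpha_2^\varepsilon(u_\varepsilon)}{b}\,d\sigma=\int_\Omega f\,dx+\int_{\partial\Omega}\frac{g}{b}\,d\sigma,
\end{equation*}
yielding $\int_\Omega f\,dx+\int_{\partial\Omega}g/b\,d\sigma=\sup\mathbb{I}$, contradicting (\ref{1.8bis}). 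The case $c_\varepsilon\to-\infty$ is symmetric, so $|c_\varepsilon|$ is uniformly bounded.

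With $u_\varepsilon$ uniformly bounded in the energy space, I would extract a weakly convergent subsequence $u_\varepsilon\rightharpoonup u$. The Minty--Browder monotonicity trick transfers the $p$- and $q$-Laplace-Beltrami terms to the limit, while the zero-order nonlinearities $\alpha_j(u_\varepsilon)$ converge a.e.\ (via the compact Sobolev and trace embeddings) and in $L^{\Lambda_j^\ast}$ (via the growth bound provided by the $\Delta_2$-condition). The limit $u$ is then the desired weak solution of (\ref{1.1}).
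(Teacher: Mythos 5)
Your necessary-condition argument is the paper's own: test the weak formulation with $v\equiv 1$ and observe that the resulting average of $(\alpha_1(u),\alpha_2(u))$ lies in $\lambda_1\mathcal{R}(\alpha_1)+\lambda_2\mathcal{R}(\alpha_2)$; this part is fine. For sufficiency, however, you take a genuinely different route from the paper, which never builds approximate solutions: it works in $\mathbb{X}_2$, proves a quasilinear Fredholm alternative $\mathcal{R}(\partial\mathcal{J}_\rho)=\{F:\int_{\overline{\Omega}}F\,d\mu=0\}$ (Theorem \ref{QFA}), identifies the zero-order part with the subdifferential $\partial\mathcal{J}_2$ (Lemma \ref{sub-1}, where the $\Delta_2$-condition is used only to make $D(\mathcal{J}_2)$ a vector space), and then applies the Brezis--Haraux theorem (Theorem \ref{T2}) to $B_3=\partial(\mathcal{J}_\rho+\mathcal{J}_2)$, writing $F=(F-C)+C$ with $C=(c_1,c_2)\in\mathcal{R}(\alpha_1)\times\mathcal{R}(\alpha_2)$ matching the average; condition \eqref{1.8bis} then places $F$ in $\mbox{int}(\mathcal{R}(B_1)+\mathcal{R}(B_2))\subset\mathcal{R}(B_3)$. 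No a priori estimates, compactness, or limit passage is needed.

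Your penalization scheme could in principle replace this, but two load-bearing steps are not substantiated. First, testing the perturbed equation with $u_\varepsilon$ does \emph{not} give an $\varepsilon$-uniform bound on $\Vert\nabla u_\varepsilon\Vert_{p}$: the right-hand side contains $c_\varepsilon\int_{\overline{\Omega}}F\,d\mu$, so all you obtain before $c_\varepsilon$ is controlled is $\Vert\nabla u_\varepsilon\Vert_p^p\leq C(1+|c_\varepsilon|)$; this weaker bound does suffice to show $w_\varepsilon/c_\varepsilon\to 0$ and hence $u_\varepsilon\to+\infty$ a.e.\ along the contradiction subsequence, but your argument should be restructured to acknowledge the dependence, since as written it is circular. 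Second, and more seriously, the limit passage in the contradiction step is unjustified precisely in the cases where $\sup\mathcal{R}(\alpha_j)=+\infty$, which the theorem allows (cf.\ Example \ref{e4.5}): the assertion that ``the $\Delta_2$-condition supplies the uniform integrability'' has no available estimate behind it, because every natural quantity ($\int_\Omega\Lambda_1(u_\varepsilon)\,dx$, $\int_\Omega\alpha_1(u_\varepsilon)u_\varepsilon\,dx$, $\Vert\alpha_1(u_\varepsilon)\Vert_{\widetilde{\Lambda}_1}$) is only bounded by $O(|c_\varepsilon|)$ along that subsequence; what must actually be controlled is $\int_{\{u_\varepsilon<0\}}|\alpha_j(u_\varepsilon)|$, and no bound for it is offered. (When both ranges are bounded, dominated convergence settles the limit and no uniform integrability is needed at all --- but then $\Delta_2$ plays no role, so the hard case is exactly the one glossed over.) Two further slips: you pass to the limit in the identity containing $\varepsilon\int_{\overline{\Omega}}u_\varepsilon\,d\mu=\varepsilon(\lambda_1+\lambda_2)c_\varepsilon$, but $\varepsilon c_\varepsilon$ need not tend to $0$; one should drop this nonnegative term and argue via Fatou to the inequality $\int_{\overline{\Omega}}F\,d\mu\geq\sup\mathbb{I}$, which still contradicts \eqref{1.8bis}. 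And in the final step, convergence of $\alpha_j(u_\varepsilon)$ in the complementary Orlicz space (needed for Minty--Browder) requires an argument beyond a growth bound, e.g.\ weak compactness in $L^{\widetilde{\Lambda}_j}$ plus a.e.\ identification of the limit. So the strategy is viable, but several of its key estimates are missing at exactly the points the paper's subdifferential route is designed to avoid.
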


Our second main result of the paper deals with a modified version of (\ref%
{1.1}) which is obtained by replacing the functions $\alpha _{1}\left(
s\right) ,$ $\alpha _{2}\left( s\right) $ in (\ref{1.1}) by $\overline{%
\alpha }_{1}\left( s\right) +\left\vert s\right\vert ^{p-2}s$ and $\overline{%
\alpha }_{2}\left( s\right) +\rho b\left\vert u\right\vert ^{q-2}u$,
respectively, and also allowing $\overline{\alpha }_{1},$ $\overline{\alpha }%
_{2}$ to depend on $x\in \overline{\Omega }$. Under additional assumptions
on $\overline{\alpha }_{1},\overline{\alpha }_{2}$ and under higher
integrability properties for the data $\left( f,g\right) $, the next theorem
provides us with conditions for unique solvability results for solutions to
such boundary value problems. Then, we obtain some regularity results for
these solutions. In addition to these results, the continuous dependence of
the solution to (\ref{1.1}) with respect to the data $\left( f,g\right) $
can be also established. In particular, we prove the following

\begin{theorem}
\label{main*2}Let all the assumptions of Theorem \ref{main*} be satisfied
for the functions $\overline{\alpha }_{1},$ $\overline{\alpha }_{2}$.
Moreover, for each $j=1,2$, assume that $\overline{\alpha }_{j}\left(
t\right) /t\rightarrow 0,$ as $t\rightarrow 0$ and $\overline{\alpha }%
_{j}\left( t\right) /t\rightarrow \infty ,$ as $t\rightarrow \infty $,
respectively.

(a) Then, for every $\left( f,g\right) \in L^{p_{1}}(\Omega )\times
L^{q_{1}}(\partial \Omega )$ with%
\begin{equation*}
p_{1}>\max \left\{ 1,\frac{N}{p}\right\} ,\text{ }q_{1}>\left\{ 
\begin{array}{ll}
\max \left\{ 1,\frac{N-1}{p-1}\right\} , & \text{if }\rho \in \left\{
0,1\right\} , \\ 
\max \left\{ 1,\frac{N-1}{p}\right\} , & \text{if }\rho =1\text{ and }p=q,%
\end{array}%
\right.
\end{equation*}
there exists a unique weak solution to problem (\ref{1.1}) (in the sense of
Definition \ref{def-form} below) which is bounded.

(b) Let $\overline{\alpha }_{j},$ $j=1,2,$ be such that%
\begin{equation*}
c_{j}\left\vert \overline{\alpha }_{j}(\xi -\eta )\right\vert \leq
\left\vert \overline{\alpha }_{j}(\xi )-\overline{\alpha }_{j}(\eta
)\right\vert ,\text{ }\mbox{ for
all }\;\xi ,\eta \in \mathbb{R},
\end{equation*}%
for some constants $c_{j}\in (0,1]$. Then, the weak (bounded)\ solution of
problem (\ref{1.1}) depends continuously on the data $\left( f,g\right) $.
Precisely, let us indicate by $u_{F_{j}}$ the unique solution corresponding
to the data $F_{j}:=\left( f_{j},g_{j}\right) \in L^{p_{1}}(\Omega )\times
L^{q_{1}}(\partial \Omega ),$ for each $j=1,2$. Then, the following estimate
holds:%
\begin{equation*}
\Vert u_{F_{1}}-u_{F_{2}}\Vert _{L^{\infty }(\Omega )}+\Vert
u_{F_{1}}-u_{F_{2}}\Vert _{L^{\infty }(\partial \Omega )}\leq Q\left( \Vert
f_{1}-f_{2}\Vert _{L^{p_{1}}(\Omega )},\Vert g_{1}-g_{2}\Vert
_{L^{q_{1}}(\partial \Omega )}\right) ,
\end{equation*}%
for some nonnegative function $Q:\mathbb{R}_{+}^{2}\rightarrow \mathbb{R}%
_{+} $, $Q\left( 0,0\right) =0$, which can be computed explicitly.
\end{theorem}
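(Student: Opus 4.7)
The plan is to split the proof into existence/uniqueness, global $L^\infty$-regularity, and continuous dependence, and to reduce as much as possible to Theorem \ref{main*} by exploiting the coercivity provided by the added terms $|s|^{p-2}s$ and $\rho b(x)|s|^{q-2}s$ together with the superlinear growth $\overline{\alpha}_j(t)/t\to\infty$.

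For existence in part (a), I would first observe that the augmented nonlinearities $\widetilde{\alpha}_1(x,s):=\overline{\alpha}_1(x,s)+|s|^{p-2}s$ and $\widetilde{\alpha}_2(x,s):=\overline{\alpha}_2(x,s)+\rho b(x)|s|^{q-2}s$ are strictly monotone in $s$ and, since $|s|^{p-2}s$ alone is a bijection $\mathbb{R}\to\mathbb{R}$, their ranges (at each $x$) equal all of $\mathbb{R}$. Consequently the admissibility condition \eqref{1.8bis} of Theorem \ref{main*} is vacuous for every $(f,g)$, and Theorem \ref{main*} yields a weak solution $u$ (after checking that the $\Delta_2$-condition and odd/monotone hypotheses carry over to the $\widetilde{\alpha}_j$ uniformly in $x$). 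Uniqueness follows by subtracting two solutions $u_1,u_2$, testing with $u_1-u_2$ together with its boundary trace, and invoking the standard inequality $\langle|\xi|^{p-2}\xi-|\eta|^{p-2}\eta,\xi-\eta\rangle\geq c_p|\xi-\eta|^{\max(p,2)}$ applied to both gradients and scalars; the terms involving $\overline{\alpha}_j(u_1)-\overline{\alpha}_j(u_2)$ are sign-definite by monotonicity and can be discarded.

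The $L^\infty$-bound is the technical heart of part (a); I would obtain it by a De Giorgi/Stampacchia truncation applied simultaneously on $\Omega$ and $\partial\Omega$. For $k\geq 0$, testing with $(u-k)_+$ (whose trace is $(u|_{\partial\Omega}-k)_+$), the bulk $p$-Laplace and boundary $q$-Laplace terms contribute $\int_{A(k)}|\nabla(u-k)_+|^p\,dx+\rho\int_{A(k)\cap\partial\Omega}|\nabla_\Gamma(u-k)_+|^q\,d\sigma$ where $A(k)=\{u>k\}$, while the monotone lower-order terms are nonnegative and the added coercive terms $|u|^{p-2}u(u-k)_+$ and $\rho b|u|^{q-2}u(u-k)_+$ supply further positivity. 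Estimating the right-hand side via H\"older and invoking the Sobolev embeddings $W^{1,p}(\Omega)\hookrightarrow L^{p^{*}}(\Omega)$ and $W^{1,q}(\partial\Omega)\hookrightarrow L^{q^{*}}(\partial\Omega)$, the thresholds $p_1>\max\{1,N/p\}$ and $q_1>\max\{1,(N-1)/(p-1)\}$ (respectively $(N-1)/p$ in the Wentzell case $\rho=1,p=q$) produce a recurrence of Stampacchia type
\[
\int_{A(k)}(u-k)_+\,dx+\int_{A(k)\cap\partial\Omega}(u-k)_+\,d\sigma \leq C\bigl(|A(h)|+\sigma(A(h)\cap\partial\Omega)\bigr)^{1+\varepsilon}(k-h)^{-\nu},\quad h<k,
\]
for some $\varepsilon,\nu>0$. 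The classical Stampacchia lemma then yields a uniform bound, and the symmetric argument applied to $(-u-k)_+$ completes the two-sided estimate $\|u\|_{L^\infty(\Omega)}+\|u\|_{L^\infty(\partial\Omega)}\leq M(\|f\|_{L^{p_1}},\|g\|_{L^{q_1}})$ with $M$ explicit.

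For part (b), set $w:=u_{F_1}-u_{F_2}$ and subtract the two weak formulations. The hypothesis $c_j|\overline{\alpha}_j(\xi-\eta)|\leq|\overline{\alpha}_j(\xi)-\overline{\alpha}_j(\eta)|$ together with monotonicity implies that $\overline{\alpha}_j(u_1)-\overline{\alpha}_j(u_2)$ has the same sign as $w$ and dominates $c_j\overline{\alpha}_j(|w|)$ in absolute value, so truncations $(w-k)_+$ keep all lower-order contributions sign-definite. Running exactly the same Stampacchia iteration as above, now with data $f_1-f_2$ and $g_1-g_2$, yields an estimate of the claimed form with $Q$ explicit and $Q(0,0)=0$. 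The principal obstacle throughout is closing the truncation argument with both interior and boundary embeddings active at once (this is precisely why, in the Wentzell regime $\rho=1, p=q$, the threshold on $q_1$ relaxes to $(N-1)/p$); a secondary subtlety is verifying that the $x$-dependence of $\overline{\alpha}_j$ remains compatible with the hypotheses of Theorem \ref{main*} uniformly on $\overline{\Omega}$.
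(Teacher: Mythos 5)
Your treatment of the $L^\infty$ bound and of part (b) is essentially the paper's: a level-set truncation of the difference of two solutions, positivity of all lower-order terms via monotonicity and the hypothesis $c_j|\overline{\alpha}_j(\xi-\eta)|\leq|\overline{\alpha}_j(\xi)-\overline{\alpha}_j(\eta)|$, H\"older with the embeddings $W^{1,p}(\Omega)\hookrightarrow L^{p_s}(\Omega)$, $W^{1,p}(\Omega)\hookrightarrow L^{q_s}(\partial\Omega)$ (resp.\ $W^{1,q}(\partial\Omega)\hookrightarrow L^{q_t}(\partial\Omega)$ in the case $\rho=1$, $p=q$, which is exactly where the threshold on $q_1$ relaxes), and the iteration lemma of Stampacchia type (Lemma \ref{lem:fallend}); this is Theorem \ref{sol-bounded}, and part (b) is precisely that theorem applied to $U=u_{F_1}$, $V=u_{F_2}$ with data $F_1-F_2$, giving $Q(s,t)=\big(C(s+t)\big)^{1/(p-1)}$.

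The genuine gap is in your existence step, which you propose to obtain by reducing to Theorem \ref{main*}. This reduction fails for three concrete reasons. First, Theorem \ref{main*} is formulated in the Hilbert space $\mathbb{X}_{2}$ and requires $(f,g)\in L^{2}(\Omega)\times L^{2}(\partial\Omega)$, whereas the admissible exponents of Theorem \ref{main*2} do not guarantee this: e.g.\ for $N=3$, $p=2$ one may take $p_1\in(3/2,2)$, so $f\in L^{p_1}(\Omega)\setminus L^{2}(\Omega)$ is allowed, and then Theorem \ref{main*} simply does not apply. Second, Theorem \ref{main*} (and the whole subdifferential machinery of Section 4, in particular Lemmas \ref{vector-space} and \ref{sub-1}) is proved for $x$-independent nonlinearities, while Theorem \ref{main*2} allows $\overline{\alpha}_j=\overline{\alpha}_j(x,\cdot)$; making that machinery work "uniformly in $x$" is not a routine check but would amount to redoing Section 4 in the Musielak--Orlicz setting. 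Third, even when it applies, Theorem \ref{main*} produces a weak solution only in the sense of Definition \ref{def-weak-sol} (test functions in $W^{1,p}(\Omega)\cap C(\overline{\Omega})$, only $\alpha_j(u)\in L^1$), whereas Theorem \ref{main*2} asserts solvability in the sense of Definition \ref{def-form}; in particular neither $u\in L_{\Lambda_1}(\Omega)$, $u|_{\partial\Omega}\in L_{\Lambda_2}(\partial\Omega)$ nor the admissibility of $u_1-u_2$ as a test function (which your uniqueness and truncation arguments require) comes for free. The paper sidesteps all of this by arguing directly in the reflexive Musielak--Orlicz space $\mathcal{V}$: Lemma \ref{lem-hemi-2} shows the form $\mathcal{A}$ is bounded, strictly monotone, hemicontinuous and coercive (coercivity coming from Proposition \ref{prop:coercive} and Corollary \ref{cor:coercive}, which is where the hypotheses $\overline{\alpha}_j(t)/t\to 0,\infty$ and the $(\triangle_2)$-conditions enter), Browder's surjectivity theorem gives $A(\mathcal{V})=\mathcal{V}'$, and $(f,g)\in L^{p_1}(\Omega)\times L^{q_1}(\partial\Omega)$ defines an element of $\mathcal{V}'$ by duality through the Sobolev and trace embeddings (Theorem \ref{sol-1}); uniqueness is strict monotonicity. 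You should replace your reduction by this argument (note also that the quantitative inequalities \eqref{ine-BW} used both in your uniqueness step and in the truncation argument require $p,q\geq2$).
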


We organize the paper as follows. In Section \ref{preli}, we introduce some
notations and recall some well-known results about Sobolev spaces, maximal
monotone operators and Orlicz type spaces which will be needed throughout
the article. In Section \ref{aux}, we show that the subdifferential of a
suitable functional associated with problem (\ref{1.1}) satisfies a sort of
"quasilinear" version of the Fredholm alternative (cf. Theorem \ref{QFA}),
which is needed in order to obtain the result in Theorem \ref{main*}.
Finally, in Sections \ref{main} and \ref{priori}, we provide detailed proofs
of Theorem \ref{main*} and Theorem \ref{main*2}. We also illustrate the
application of these results with some examples.

\section{Preliminaries and notations}

\label{preli}

In this section we put together some well-known results on nonlinear forms,
maximal monotone operators and Sobolev spaces. For more details on maximal
monotone operators, we refer to the monographs \cite{BC,Br73,Min1,Min2,Scho}%
. We will also introduce some notations.

\subsection{Maximal monotone operators}

Let $H$ be a real Hilbert space with scalar product $(\cdot,\cdot)_H$.

\begin{definition}
Let $A:\;D(A)\subset H\to H$ be a closed (nonlinear) operator. The operator $%
A$ is said to be:

\begin{enumerate}
\item[(i)] \emph{monotone}, if for all $u,v\in D(A)$ one has%
\begin{equation*}
(Au-Av,u-v)_{H}\geq 0.
\end{equation*}

\item[(ii)] \emph{maximal monotone}, if it is monotone and the operator $I+A$
is invertible.
\end{enumerate}
\end{definition}

Next, let $V$ be a real reflexive Banach space which is densely and
continuously embedded into the real Hilbert space $H$, and let $V^{\prime}$
be its dual space such that $V\hookrightarrow H\hookrightarrow V^{\prime}$.

\begin{definition}
\label{def-11} Let $\mathcal{A}:\;V\times V\to \mathbb{R}$ be a continuous
map.

\begin{enumerate}
\item The map $\mathcal{A}:\;V\times V\to \mathbb{R}$ is called a \emph{%
nonlinear form} on $H$ if for all $u\in V$ one has $\mathcal{A}(u,\cdot)\in
V^{\prime}$, that is, if $\mathcal{A}$ is linear and bounded in the second
variable.

\item The nonlinear form $\mathcal{A}:\;V\times V\to \mathbb{R}$ is said to
be:

\begin{enumerate}
\item[(i)] \emph{monotone} if 
$\mathcal{A}(u,u-v)-\mathcal{A}(v,u-v)\ge 0\;\mbox{ for all }\;u,v\in V$; 

\item[(ii)] \emph{hemicontinuous} if 
$\displaystyle\lim_{t\downarrow 0} \mathcal{A}(u+tv,w)=\mathcal{A}%
(u,w),\;\;\forall\;u,v,w\in V$ ; 

\item[(iii)] \emph{coercive}, if 
$\displaystyle\lim_{\|v\|_V\to+\infty}\frac{\mathcal{A}(v,v)}{\|v\|_V}%
=+\infty$. 
\end{enumerate}
\end{enumerate}
\end{definition}

Now, let $\varphi :\;H\rightarrow (-\infty ,+\infty ]$ be a proper, convex,
lower semicontinuous functional with effective domain 
\begin{equation*}
D(\varphi ):=\{u\in H:\;\varphi (u)<\infty \}.
\end{equation*}%
The subdifferential $\partial \varphi $ of the functional $\varphi $ is
defined by 
\begin{equation*}
\begin{cases}
\displaystyle D(\partial \varphi ) & :=\{u\in D(\varphi ):\;\exists \;w\in
H\;\forall \;v\in D(\varphi ):\;\varphi (v)-\varphi (u)\geq (w,v-u)_{H}\};
\\ 
&  \\ 
\displaystyle\partial \varphi (u) & :=\{w\in H:\;\forall \;v\in D(\varphi
):\;\varphi (v)-\varphi (u)\geq (w,v-u)_{H}\}.%
\end{cases}%
\end{equation*}
By a classical result of Minty \cite{Min1} (see also \cite{Br73,Min2}), $%
\partial\varphi$ is a maximal monotone operator.

\subsection{Functional setup}

Let $\Omega \subset \mathbb{R}^{N}$ be a bounded domain with a Lipschitz
boundary $\partial \Omega $. For $1<p<\infty $, we let $W^{1,p}(\Omega )$ be
the first order Sobolev space, that is, 
\begin{equation*}
W^{1,p}(\Omega )=\{u\in L^{p}(\Omega ):\;\nabla u\in (L^{p}(\Omega ))^{N}\}.
\end{equation*}%
Then $W^{1,p}(\Omega ),$ endowed with the norm%
\begin{equation*}
\Vert u\Vert _{W^{1,p}(\Omega )}:=\left( \Vert u\Vert _{\Omega ,p}^{p}+\Vert
\nabla u\Vert _{\Omega ,p}^{p}\right) ^{1/p}
\end{equation*}%
is a Banach space, where we have set%
\begin{equation*}
\Vert u\Vert _{\Omega ,p}^{p}:=\int_{\Omega }|u|^{p}\;dx.
\end{equation*}%
Since $\Omega $ has a Lipschitz boundary, it is well-known that there exists
a constant $C>0$ such that%
\begin{equation}
\Vert u\Vert _{\Omega ,p_{s}}\leq C\Vert u\Vert _{W^{1,p}(\Omega )},\;\text{%
for all }u\in W^{1,p}(\Omega ),  \label{sob-ine1}
\end{equation}%
where $p_{s}=\frac{pN}{N-p}$ if $p<N,$ and $1\leq p_{s}<\infty $ if $N=p$.
Moreover the trace operator $\mbox{Tr}(u):=u_{|_{\partial \Omega }}$
initially defined for $u\in C^{1}(\bar{\Omega})$ has an extension to a
bounded linear operator from $W^{1,p}(\Omega )$ into $L^{q_{s}}(\partial
\Omega )$ where $q_{s}:=\frac{p(N-1)}{N-p}$ if $p<N$, and $1\leq
q_{s}<\infty $ if $N=p$. Hence, there is a constant $C>0$ such that 
\begin{equation}
\Vert u\Vert _{\partial \Omega ,q_{s}}\leq C\Vert u\Vert _{W^{1,p}(\Omega
)},\;\text{for all}\;u\in W^{1,p}(\Omega ).  \label{trace}
\end{equation}%
Throughout the remainder of this article, for $1<p<N$, we let 
\begin{equation}
p_{s}:=\frac{pN}{N-p}\;\mbox{ and }\;q_{s}:=\frac{p(N-1)}{N-p}.
\label{ps-qs}
\end{equation}

If $p>N$, one has that 
\begin{equation}
W^{1,p}(\Omega )\hookrightarrow C^{0,1-\frac{N}{p}}(\bar{\Omega}),
\label{cont-inj}
\end{equation}%
that is, the space $W^{1,p}(\Omega )$ is continuously embedded into $C^{0,1-%
\frac{N}{p}}(\bar{\Omega})$. For more details, we refer to \cite[Theorem 4.7]%
{necas} (see also \cite[Chapter 4]{MP}).

For $1<q<\infty $, we define the Sobolev space $W^{1,q}(\partial \Omega )$
to be the completion of the space $C^{1}(\partial \Omega )$ with respect to
the norm 
\begin{equation*}
\Vert u\Vert _{W^{1,q}(\partial \Omega )}:=\left( \int_{\partial \Omega
}|u|^{q}\;d\sigma +\int_{\partial \Omega }|\nabla _{\Gamma }u|^{q}\;d\sigma
\right) ^{1/q},
\end{equation*}%
where we recall that $\nabla _{\Gamma }u$ denotes the tangential gradient of
the function $u$ at the boundary $\partial \Omega $. It is also well-known
that $W^{1,q}(\partial \Omega )$ is continuously embedded into $%
L^{q_{t}}(\partial \Omega )$ where $q_{t}:=\frac{q(N-1)}{N-1-q}$ if $%
1<q<N-1, $ and $1\leq q_{t}<\infty $ if $q=N-1$. Hence, for $1<q\le N-1$,
there exists a constant $C>0$ such that%
\begin{equation}
\Vert u\Vert _{q_{t},\partial \Omega }\leq C\Vert u\Vert _{W^{1,q}(\partial
\Omega )},\text{ for all }u\in W^{1,q}(\partial \Omega ).  \label{qt}
\end{equation}

Let $\lambda _{N}$ denote the $N$-dimensional Lebesgue measure and let the
measure $\mu :=\lambda _{N}|_{\Omega }\oplus \sigma $ on $\overline{\Omega }$
be defined for every measurable set $A\subset \overline{\Omega }$ by%
\begin{equation*}
\mu (A):=\lambda _{N}(\Omega \cap A)+\sigma (A\cap \partial \Omega ).
\end{equation*}%
For $p,q\in \lbrack 1,\infty ],$ we define the Banach space 
\begin{equation*}
X^{p,q}(\overline{\Omega },\mu ):=\{F=(f,g):\;f\in L^{p}(\Omega )\mbox{ and }%
g\in L^{q}(\partial \Omega )\}
\end{equation*}%
endowed with the norm 
\begin{equation*}
\Vert F\Vert _{X^{p,q}(\overline{\Omega })}=\Vert |F\Vert |_{p,q}:=\Vert
f\Vert _{\Omega ,p}+\Vert g\Vert _{\partial \Omega ,q},
\end{equation*}%
if $1\leq p,q<\infty ,$ and 
\begin{equation*}
\Vert F\Vert _{X^{\infty ,\infty }(\overline{\Omega },\mu )}=\Vert |F\Vert
|_{\infty }:=\max \{\Vert f\Vert _{\Omega ,\infty },\Vert g\Vert _{\partial
\Omega ,\infty }\}.
\end{equation*}%
If $p=q$, we will simply denote $\Vert |F\Vert |_{p,p}=\Vert |F\Vert |_{p}$.

Identifying each function $u\in W^{1,p}(\Omega )$ with $U=(u,u|_{\partial
\Omega })$, we have that $W^{1,p}(\Omega )$ is a subspace of $X^{p,p}(%
\overline{\Omega },\mu )$.

For $1<p,q<\infty ,$ we endow%
\begin{equation*}
\mathcal{V}_{1}:=\{U:=(u,u|_{\partial \Omega }),u\in W^{1,p}(\Omega
),\;u|_{\partial \Omega }\in W^{1,q}(\partial \Omega )\}
\end{equation*}%
with the norm 
\begin{equation*}
\Vert U\Vert _{\mathcal{V}_{1}}:=\Vert u\Vert _{W^{1,p}(\Omega )}+\Vert
u\Vert _{W^{1,q}(\partial \Omega )},
\end{equation*}%
while%
\begin{equation*}
\mathcal{V}_{0}:=\{U=(u,u|_{\partial \Omega }):\;u\in W^{1,p}(\Omega )\}
\end{equation*}%
is endowed with the norm 
\begin{equation*}
\Vert U\Vert _{\mathcal{V}_{0}}:=\Vert u\Vert _{W^{1,p}(\Omega )}.
\end{equation*}%
It follows from \eqref{sob-ine1}-\eqref{trace} that $\mathcal{V}_{0}$ is
continuously embedded into $X^{p_{s},q_{s}}(\overline{\Omega },\mu ),$ with $%
p_{s}$ and $q_{s}$ given by \eqref{ps-qs}, for $1<p<N$. Moreover, by %
\eqref{sob-ine1} and \eqref{qt}, $\mathcal{V}_{1}$ is continuously embedded
into $X^{p_{s},q_{t}}(\overline{\Omega },\mu )$.

\subsection{Musielak-Orlicz type spaces}

For the convenience of the reader, we introduce the Orlicz and
Musielak-Orlicz type spaces and prove some properties of these spaces which
will be frequently used in the sequel (see Section 5).

\begin{definition}
\label{def-24} Let $(X,\Sigma,\nu)$ be a complete measure space. We call a
function $B:X\times\mathbb{R}\to[0,\infty]$ a \emph{Musielak-Orlicz function}
on $X$ if

\begin{enumerate}
\item $B(x,\cdot)$ is non-trivial, even, convex for $\nu$-a.e. $x\in X$;

\item $B(x,\cdot)$ is vanishing and continuous at $0$ for $\nu$-a.e. $x\in X$%
;

\item $B(x,\cdot )$ is left continuous on $[0,\infty );$

\item $B(\cdot ,t)$ is $\Sigma $-measurable for all $t\in \lbrack 0,\infty )$%
;

\item $\displaystyle\lim_{t\rightarrow \infty }\frac{B(x,t)}{t}=\infty $.
\end{enumerate}
\end{definition}

The \emph{complementary Musielak-Orlicz function} $\widetilde{B}$ is defined
by 
\begin{equation*}
\widetilde{B}(x,t):=\sup \{s|t|-B(x,s):s>0\}.
\end{equation*}%
It follows directly from the definition that for $t,s\geq 0$ (and hence for
all $t,s\in \mathbb{R}$) 
\begin{equation*}
st\leq B(x,t)+\widetilde{B}(x,s).
\end{equation*}

\begin{definition}
\label{Musielak-Orlicz} We say that a Musielak-Orlicz function $B$ satisfies
the $(\triangle _{\alpha }^{0})$-condition $(\alpha >1)$ if there exists a
set $X_{0}$ of $\nu $-measure zero and a constant $C_{\alpha }>1$ such that 
\begin{equation*}
B(x,\alpha t)\leq C_{\alpha }B(x,t),
\end{equation*}%
for all $t\in \mathbb{R}$ and every $x\in X\setminus X_{0}$.

We say that $B$ satisfies the $(\nabla _{2}^{0})$-condition if there is a
set $X_{0}$ of $\nu $-measure zero and a constant $c>1$ such that 
\begin{equation*}
B(x,t)\leq \frac{1}{2c}B(x,ct),
\end{equation*}%
for all $t\in \mathbb{R}$ and all $x\in X\setminus X_{0}$.
\end{definition}

\begin{definition}
\label{N-func} A function $\Phi :\mathbb{R}\rightarrow \lbrack 0,\infty )$
is called an ${\mathcal{N}}$-function if

\begin{itemize}
\item $\Phi $ is even, strictly increasing and convex;

\item $\Phi (t)=0$ if and only if $t=0$;

\item $\displaystyle\lim_{t\rightarrow 0}\frac{\Phi (t)}{t}=0$ and $%
\displaystyle\lim_{t\rightarrow \infty }\frac{\Phi (t)}{t}=\infty $.
\end{itemize}

We say that an ${\mathcal{N}}$-function $\Phi $ satisfies the $(\triangle
_{2})$-condition if there exists a constant $C_{2}>1$ such that 
\begin{equation*}
\Phi (2t)\leq C_{2}\Phi (t),\;\;\;\mbox{ for all }\;t\in \mathbb{R},
\end{equation*}%
and it satisfies the $(\nabla _{2})$-condition if there is a constant $c>1$
such that 
\begin{equation*}
\Phi (t)\leq \Phi (ct)/(2c),\;\;\mbox{ for all }\;t\in \mathbb{R}.
\end{equation*}%
For more details on ${\mathcal{N}}$-functions, we refer to the monograph of
Adams \cite[Chapter VIII]{Adam} (see also \cite[Chapter I]{RR-1}, \cite[%
Chapter I]{RR}).
\end{definition}

\begin{remark}
\label{rem-N-func} \emph{For an ${\mathcal{N}}$-function $\Phi $, we let $%
\varphi $ be its left-sided derivative. Then $\varphi $ is left continuous
on $(0,\infty )$ and nondecreasing. Let $\psi $ be given by}%
\begin{equation*}
\psi \left( s\right) :=\inf \left\{ t>0:\varphi \left( t\right) >s\right\} .
\end{equation*}%
\emph{Then 
\begin{equation*}
\Phi (t)=\int_{0}^{|t|}\varphi (s)\;ds;\qquad \Psi (t):=\int_{0}^{|t|}\psi
(s)\;ds=\sup \{|t|s-\Phi (s):s>0\}.
\end{equation*}%
As before for all $s,t\in \mathbb{R}$, 
\begin{equation}
st\leq \Phi (t)+\Psi (s).  \label{st}
\end{equation}%
Moreover, if $s=\varphi (t)$ or $t=\psi (s)$ then we have equality, that is, 
\begin{equation}
\Psi (\varphi (t))=t\varphi (t)-\Phi (t).  \label{tt}
\end{equation}%
The function $\Psi $ is called the complementary ${\mathcal{N}}$-function of 
$\Phi $. It is also known that an ${\mathcal{N}}$-function $\Phi $ satisfies
the $(\triangle _{2})$-condition if and only if 
\begin{equation}
ct\varphi (t)\leq \Phi (t)\leq t\varphi (t),  \label{delta-2}
\end{equation}%
for some constant $c\in (0,1]$ and for all $t\in \mathbb{R}$, where $\varphi 
$ is the left-sided derivative of $\Phi $.}
\end{remark}

\begin{lemma}
Let $\Phi$ be an ${\mathcal{N}}$-function which satisfies the $(\triangle_2)$%
-condition with the constant $C_2>1$ and let $\Psi$ be its complementary ${%
\mathcal{N}}$-function. Then $\Psi$ satisfies the $(\nabla_2)$-condition
with the constant $c:=2^{C_2-1}$.
\end{lemma}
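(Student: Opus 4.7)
The plan is to reduce the claim to a logarithmic differential inequality for $\Psi$, derived from Young's equality together with the $(\triangle_2)$-condition on $\Phi$. First, I would fix $s>0$, set $u:=\psi(s)$, and observe that by the very definition $\psi(s)=\inf\{t>0:\varphi(t)>s\}$ one has $\varphi(r)>s$ for every $r>u$. Integrating this observation over $(u,2u)$ and invoking $(\triangle_2)$ for $\Phi$ gives the pointwise estimate
\[
s\,\psi(s) \;\leq\; \int_{u}^{2u}\varphi(r)\,dr \;=\; \Phi(2u)-\Phi(u) \;\leq\; (C_2-1)\,\Phi(\psi(s)).
\]

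Next I would combine this with Young's equality in its dual form, namely $\Psi(s)=s\psi(s)-\Phi(\psi(s))$, which follows from (\ref{st}) and the equality case (\ref{tt}) used at $t=\psi(s)$ and holds for every $s>0$. Substituting the lower bound on $\Phi(\psi(s))$ from the first step yields
\[
\Psi(s) \;\leq\; s\psi(s)\left(1-\frac{1}{C_2-1}\right) \;=\; \frac{C_2-2}{C_2-1}\,s\psi(s),
\]
equivalently $s\psi(s)\geq \beta\Psi(s)$ with $\beta:=(C_2-1)/(C_2-2)$. Here $C_2>2$ necessarily: convexity of $\Phi$ and $\Phi(0)=0$ force $\Phi(2t)\geq 2\Phi(t)$ for all $t$, and since the $\mathcal{N}$-function $\Phi$ is genuinely nonlinear, the $(\triangle_2)$-constant must strictly exceed $2$, so $\beta$ is well-defined and positive.

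Third, I would pass to the logarithmic derivative. Since $\Psi$ is convex with $\Psi(0)=0$ and $\Psi>0$ on $(0,\infty)$, the map $\log\Psi$ is absolutely continuous on every compact subinterval of $(0,\infty)$ with $(\log\Psi)'(s)=\psi(s)/\Psi(s)$ almost everywhere. The pointwise bound of Step~2 yields $(\log\Psi)'(s)\geq \beta/s$ a.e., so integrating from $t$ to $ct$ for any $t>0$ and $c>1$ gives
\[
\log\frac{\Psi(ct)}{\Psi(t)} \;\geq\; \beta\log c, \qquad\text{i.e.,}\qquad \Psi(ct)\;\geq\;c^{\beta}\,\Psi(t).
\]

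Finally I would verify that the specified choice $c=2^{C_2-1}$ satisfies $c^{\beta}\geq 2c$, which is precisely the $(\nabla_2)$-inequality $\Psi(t)\leq \Psi(ct)/(2c)$. A brief algebraic check gives $(C_2-1)\beta=(C_2-1)^2/(C_2-2)=C_2+1/(C_2-2)>C_2$, hence $c^{\beta}=2^{(C_2-1)\beta}>2^{C_2}=2c$, with some room to spare. I expect the main subtle point to be Step~1: one needs the pointwise bound $s\psi(s)\leq (C_2-1)\Phi(\psi(s))$ for \emph{every} $s>0$, which could be delicate in the presence of jumps or flat pieces of $\varphi$ and $\psi$; the infimum characterization of $\psi$ sidesteps this by ensuring the strict inequality $\varphi(r)>s$ on the full open interval $(\psi(s),2\psi(s))$, with no exceptional set to dodge.
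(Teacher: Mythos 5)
Your proof is correct and follows essentially the same route as the paper's: both establish a pointwise bound of the form $s\psi(s)\ge\kappa\,\Psi(s)$ with $\kappa>1$ and then integrate $\psi(s)/\Psi(s)\ge\kappa/s$ over $[t,ct]$ to check that $c=2^{C_2-1}$ gives $\Psi(ct)\ge c^{\kappa}\Psi(t)\ge 2c\,\Psi(t)$. The only difference is how the pointwise bound is obtained: the paper uses $t\varphi(t)\le\Phi(2t)\le C_2\Phi(t)$ together with Young's equality at $s=\varphi(t)$ and a monotonicity transfer via $\varphi(\psi(s))\ge s$, yielding $\kappa=C_2/(C_2-1)$, whereas you integrate $\varphi>s$ over $(\psi(s),2\psi(s))$ and use the equality case at $t=\psi(s)$, yielding the slightly sharper $\kappa=(C_2-1)/(C_2-2)$ at the cost of the extra (correct, and correctly justified) observation that necessarily $C_2>2$ for an ${\mathcal{N}}$-function.
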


\begin{proof}
We have 
\begin{equation*}
t\varphi (t)\leq \int_{t}^{2t}\varphi (s)\;ds\leq \int_{0}^{2t}\varphi
(s)\;ds=\Phi (2t)\leq C_{2}\Phi (t).
\end{equation*}%
Since $\varphi (\psi (s))\geq s$ for all $s\geq 0$ and $s/\Psi (s)$ and $%
s/(s-1)$ are decreasing, we get for $t:=\psi (s),$ that%
\begin{equation*}
\frac{s\psi (s)}{\Psi (s)}\geq \frac{\varphi (\psi (s))\psi (s)}{\Psi
(\varphi (\psi (s)))}=\frac{t\varphi (t)}{\Psi (\varphi (t))}=\frac{t\varphi
(t)}{t\varphi (t)-\Phi (t)}\geq \frac{C_{2}}{C_{2}-1}.
\end{equation*}%
Now let $c:=2^{C_{2}-1}$. Then for $t\geq 0,$%
\begin{align*}
\ln \left( \frac{\Psi (ct)}{\Psi (t)}\right) & =\int_{t}^{ct}\frac{\psi (s)}{%
\Psi (s)}\;ds\geq \int_{t}^{ct}\frac{C_{2}}{s(C_{2}-1)}\;ds \\
& =\frac{C_{2}}{C_{2}-1}\ln (c)=C_{2}\log (2)=\ln (2\cdot 2^{C_{2}-1}).
\end{align*}%
Hence, $\Psi (t)2c\leq \Psi (ct)$.
\end{proof}

\begin{corollary}
Let $B$ be a Musielak-Orlicz function such that $B(x,\cdot)$ is an ${%
\mathcal{N}}$-function for $\nu$-a.e. $x$. If $B$ satisfies the $%
(\triangle_2^0)$-condition, then $\widetilde B$ satisfies the $(\nabla_2^0)$%
-condition.
\end{corollary}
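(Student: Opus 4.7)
The plan is to reduce the corollary to the single-variable lemma by applying it pointwise in $x$, using that the constant produced by the lemma depends only on the $(\triangle_{2})$-constant and not on the particular $\mathcal{N}$-function.

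First, I would invoke the $(\triangle_{2}^{0})$-condition (with $\alpha=2$) for $B$ to obtain a $\nu$-null set $X_{0}\subset X$ and a constant $C_{2}>1$ such that
\begin{equation*}
B(x,2t)\leq C_{2}B(x,t),\qquad \text{for all }t\in \mathbb{R}\text{ and every }x\in X\setminus X_{0}.
\end{equation*}
Enlarging $X_{0}$ by a null set if necessary, I may also assume that for every $x\in X\setminus X_{0}$ the function $B(x,\cdot)$ is an $\mathcal{N}$-function (this is part of the hypothesis). Then for each such fixed $x$, $B(x,\cdot)$ satisfies the $(\triangle_{2})$-condition with the \emph{same} constant $C_{2}$ that appears in the $(\triangle_{2}^{0})$-condition.

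Next, for each $x\in X\setminus X_{0}$, I apply the preceding lemma to the $\mathcal{N}$-function $\Phi:=B(x,\cdot)$. The lemma produces its complementary $\mathcal{N}$-function $\Psi$, which is precisely $\widetilde{B}(x,\cdot)$ by the Legendre-type formula defining the complementary Musielak-Orlicz function. The lemma then yields
\begin{equation*}
\widetilde{B}(x,t)\,2c\leq \widetilde{B}(x,ct),\qquad \text{for all }t\in \mathbb{R},
\end{equation*}
where $c:=2^{C_{2}-1}>1$. The decisive point, and the reason the argument goes through uniformly, is that $c$ depends only on $C_{2}$ (not on $x$), because the constant $C_{2}$ in the $(\triangle_{2}^{0})$-condition is global.

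Since the exceptional set $X_{0}$ is independent of $t$ and $c$ is independent of $x$, rearranging the last display gives
\begin{equation*}
\widetilde{B}(x,t)\leq \frac{1}{2c}\widetilde{B}(x,ct)\qquad \text{for all }t\in \mathbb{R}\text{ and all }x\in X\setminus X_{0},
\end{equation*}
which is exactly the $(\nabla_{2}^{0})$-condition for $\widetilde{B}$. The only issue requiring a remark is that $\widetilde{B}$ is itself a Musielak-Orlicz function, i.e., that the measurability condition in Definition \ref{def-24}(4) is preserved under the Legendre transform $B\mapsto \widetilde{B}$; this is standard (the supremum defining $\widetilde{B}(x,t)$ can be taken over rationals $s>0$, which turns it into a countable supremum of $\Sigma$-measurable functions), so the main substance of the corollary is the transfer of constants carried out above.
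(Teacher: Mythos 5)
Your argument is correct and is exactly the intended one: the paper states this corollary without proof as the immediate pointwise consequence of the preceding lemma, relying on the fact that the constant $C_{2}$ in the $(\triangle_{2}^{0})$-condition (and hence $c=2^{C_{2}-1}$) is uniform in $x$ outside a single null set. Your additional remarks on identifying $\widetilde{B}(x,\cdot)$ with the complementary $\mathcal{N}$-function of $B(x,\cdot)$ and on measurability are fine but not needed beyond what the paper tacitly assumes.
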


\begin{definition}
Let $B$ be a Musielak-Orlicz function. Then the Musielak-Orlicz space $%
L^{B}(X)$ associated with $B$ is defined by 
\begin{equation*}
L^{B}(X):=\{u:X\rightarrow \mathbb{R}\text{ measurable }:\rho _{B}(u/\alpha
)<\infty \text{ for some }\alpha >0\},
\end{equation*}%
where%
\begin{equation*}
\rho _{B}(v):=\int_{X}B(x,v(x))\;d\nu (x).
\end{equation*}%
On this space we consider the Luxemburg norm $\Vert \cdot \Vert _{X,B}$
defined by 
\begin{equation*}
\Vert u\Vert _{X,B}:=\inf \{\alpha >0:\rho _{B}(u/\alpha )\leq 1\}.
\end{equation*}
\end{definition}

\begin{proposition}
\label{prop:coercive} Let $B$ be a Musielak-Orlicz function which satisfies
the $(\nabla _{2}^{0})$ -condition. Then 
\begin{equation*}
\lim_{\Vert u\Vert _{X,B}\rightarrow +\infty }\frac{\rho _{B}(u)}{\Vert
u\Vert _{X,B}}=+\infty .
\end{equation*}
\end{proposition}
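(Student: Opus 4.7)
The plan is to argue by contradiction, exploiting the strictly superlinear scaling encoded in the $(\nabla_2^0)$-condition. Suppose the conclusion fails; then there exist $M>0$ and a sequence $(u_n)\subset L^B(X)$ with $\lambda_n:=\|u_n\|_{X,B}\to\infty$ while $\rho_B(u_n)\le M\lambda_n$ for every $n$. The goal is to show that this hypothesis forces $\lambda_n$ to remain bounded, yielding the desired contradiction.

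The first step is to extract a contraction estimate from $(\nabla_2^0)$. Writing the condition in the equivalent form $B(x,s/c)\le (2c)^{-1}B(x,s)$ for $\nu$-a.e.\ $x$ and every $s\in\mathbb{R}$ (obtained by the substitution $t=s/c$), and iterating $k$ times yields
\begin{equation*}
B(x,s/c^k)\le (2c)^{-k}B(x,s).
\end{equation*}
Integrating in $x$ with $s=u_n(x)$ gives the modular estimate $\rho_B(u_n/c^k)\le (2c)^{-k}\rho_B(u_n)\le M\lambda_n(2c)^{-k}$, valid for every $k\in\mathbb{N}$.

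The next step is to choose $k_n:=\lceil\log_{2c}(M\lambda_n)\rceil$, which guarantees $\rho_B(u_n/c^{k_n})\le 1$. By the definition of the Luxemburg norm together with its positive homogeneity, this forces $\|u_n/c^{k_n}\|_{X,B}\le 1$, i.e., $\lambda_n\le c^{k_n}$. Combined with $k_n\le 1+\log_{2c}(M\lambda_n)$, one obtains $\lambda_n\le c(M\lambda_n)^{\gamma}$ where $\gamma:=\log_{2c}c=\ln c/(\ln 2+\ln c)$, and hence $\lambda_n^{1-\gamma}\le cM^{\gamma}$.

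The heart of the argument — and the main conceptual point to watch — is that the strict inequality $\gamma<1$ now bounds $\lambda_n$ by a finite constant independent of $n$, contradicting $\lambda_n\to\infty$. This strict inequality is precisely the quantitative signature of the extra factor of $2$ appearing in $(\nabla_2^0)$: without it one would only recover the tautological convexity bound $B(x,s/c)\le c^{-1}B(x,s)$, which yields the trivial estimate $\lambda_n\le\lambda_n$ and no contradiction. I expect the only mild subtlety in executing this plan will be bookkeeping the passage from the pointwise iterated inequality to the modular bound (handled by monotone convergence or dominated convergence, and the fact that $(\nabla_2^0)$ is required to hold off a $\nu$-null set independent of $k$, which is automatic by taking a countable union of null sets over $k\in\mathbb{N}$).
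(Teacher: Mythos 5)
Your proof is correct. It rests on the same engine as the paper's argument, namely iterating the $(\nabla_2^0)$-condition to get $B(x,s/c^k)\le (2c)^{-k}B(x,s)$ off a fixed null set, but the two proofs are packaged differently. The paper argues directly: from the iterated inequality it extracts, for each $\lambda>0$, an $\alpha>0$ with $B(x,\alpha t)\le \frac{\alpha}{\lambda}B(x,t)$, and then combines this with the lower bound $\rho_B(\alpha u)\ge \alpha\Vert u\Vert_{X,B}$ valid whenever $\alpha\Vert u\Vert_{X,B}>1$ (the more delicate direction of the modular--norm comparison, proved there by a small limiting convexity argument using the Luxemburg infimum) to conclude $\rho_B(u)\ge \lambda\Vert u\Vert_{X,B}$ for $\Vert u\Vert_{X,B}>1/\alpha$. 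You instead argue by contradiction, iterate the condition an explicitly quantified number of times $k_n$, and invoke only the easy direction $\rho_B(v)\le 1\Rightarrow \Vert v\Vert_{X,B}\le 1$, which yields the power bound $\lambda_n\le c(M\lambda_n)^{\gamma}$ with $\gamma=\ln c/\ln(2c)<1$ and hence boundedness of $\lambda_n$. What your route buys is that it bypasses the paper's estimate (B12) entirely, replacing it by trivial membership of $c^{k_n}$ in the Luxemburg admissible set, at the cost of being non-quantitative in form (contradiction rather than an explicit threshold $1/\alpha(\lambda)$, though one could easily extract one). Two cosmetic remarks: the passage from the pointwise iterated inequality to the modular bound is just integration of a pointwise inequality between nonnegative measurable functions, so no monotone or dominated convergence is needed; and since the definition of $(\nabla_2^0)$ already provides a single null set valid for all $t$, no countable union over $k$ is required. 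Finally, your choice $k_n=\lceil\log_{2c}(M\lambda_n)\rceil$ tacitly assumes $M\lambda_n\ge 1$, which holds for all large $n$ since $\lambda_n\to\infty$, so the argument is sound.
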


\begin{proof}
If $B$ satisfies the $(\nabla _{2}^{0})$-condition, then there exists a set $%
X_{0}\subset X$ of measure zero such that for every $\varepsilon >0$ there
exists $\alpha =\alpha (\varepsilon )>0$,%
\begin{equation}
B(x,\alpha t)\leq \alpha \varepsilon B(x,t),  \label{B11}
\end{equation}%
for all $t\in \mathbb{R}$ and all $x\in X\backslash X_{0}$. Let $\lambda \in
(0,\infty )$ be fixed. For $\varepsilon :=1/\lambda $ there exists $\alpha
>0 $ satisfying the above inequality. We will show that $\rho _{B}(u)\geq
\lambda \Vert u\Vert _{X,B}$ whenever $\Vert u\Vert _{X,B}>1/\alpha $.
Assume that $\Vert u\Vert _{X,B}>1/\alpha $ and let $\delta >0$ be such that 
$\alpha =(1+\delta )/\Vert u\Vert _{X,B}$. Then 
\begin{align*}
\rho _{B}(\alpha u)& =\int_{X}B(x,u(1+\delta )/\Vert u\Vert _{X,B})\;d\mu \\
& \geq (1+\delta )^{1-1/n}\int_{X}B(x,u(1+\delta )^{1/n}/\Vert u\Vert
_{X,B})\;d\mu \geq (1+\delta )^{1-1/n},
\end{align*}%
for all $n\in \mathbb{N}$. If we assume that the last inequality does not
hold, then 
\begin{equation*}
\Vert u\Vert _{X,B}/(1+\delta )\in \{\alpha >0:\rho (u/\alpha )\leq 1\},
\end{equation*}%
and this clearly contradicts the definition of $\Vert u\Vert _{X,B}$.
Therefore, we must have%
\begin{equation}
\rho _{B}(\alpha u)\geq 1+\delta =\alpha \Vert u\Vert _{X,B}.  \label{B12}
\end{equation}%
From (\ref{B11}), (\ref{B12}), we obtain%
\begin{equation*}
\rho _{B}(u)=\int_{X}B(x,u(x))\;d\mu \geq \frac{\lambda }{\alpha }%
\int_{X}B(x,\alpha u(x))\;d\mu =\frac{\lambda }{\alpha }\rho _{B}(\alpha
u)\geq \lambda \Vert u\Vert _{X,B}.
\end{equation*}

The proof is finished.
\end{proof}

\begin{corollary}
\label{cor:coercive} Let $B$ be a Musielak-Orlicz function such that $%
B(x,\cdot)$ is an ${\mathcal{N}}$-function for $\nu$-a.e. $x$. If its
complementary ${\mathcal{N}}$-function $\widetilde B$ satisfies the $%
(\triangle_2^0)$-condition, then $B$ satisfies the $(\nabla_2^0)$-condition
and 
\begin{equation*}
\lim_{\|u\|_{X,B}\to+\infty} \frac{\rho_B(u)}{\|u\|_{X,B}}=+\infty.
\end{equation*}
\end{corollary}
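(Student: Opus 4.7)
The plan is to reduce this corollary to two earlier results in the excerpt: the immediately preceding Corollary (which transfers $(\triangle_2^0)$ for $B$ into $(\nabla_2^0)$ for $\widetilde B$) and Proposition \ref{prop:coercive} (which gives the coercivity conclusion from $(\nabla_2^0)$). The only genuinely new ingredient is the pointwise identity $\widetilde{\widetilde{B(x,\cdot)}}=B(x,\cdot)$ for $\mathcal{N}$-functions.

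First I would argue that $B$ satisfies $(\nabla_2^0)$. Since $B(x,\cdot)$ is an $\mathcal{N}$-function for $\nu$-a.e.\ $x$, and the complementary operation on $\mathcal{N}$-functions is involutive (this is a standard fact, following from the description of $\Phi$ and $\Psi$ as Legendre transforms of one another, or from the representation via $\varphi$ and $\psi$ in Remark \ref{rem-N-func}, since $\psi$ is the generalized inverse of $\varphi$ and vice versa). Hence $\widetilde{\widetilde{B}}=B$ pointwise outside a $\nu$-null set. The previous corollary, applied with $\widetilde{B}$ in the role of $B$, says that if $\widetilde{B}$ satisfies $(\triangle_2^0)$, then its complementary function $\widetilde{\widetilde{B}}=B$ satisfies $(\nabla_2^0)$. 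This is the first conclusion.

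Second, with $B$ now satisfying $(\nabla_2^0)$ and remaining a Musielak-Orlicz function (the hypotheses of Proposition \ref{prop:coercive} are exactly these), the proposition directly yields
\begin{equation*}
\lim_{\|u\|_{X,B}\to+\infty}\frac{\rho_B(u)}{\|u\|_{X,B}}=+\infty,
\end{equation*}
which finishes the proof.

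The only step that requires any thought is the involutive property $\widetilde{\widetilde{B}}=B$; once that is invoked, the result is immediate from the two preceding statements. I expect no real obstacle beyond citing (or, if desired, briefly verifying pointwise in $x$) the standard fact that the Young complementary operation is an involution on $\mathcal{N}$-functions.
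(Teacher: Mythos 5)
Your proposal is correct and follows exactly the route the paper intends (the corollary is stated there without proof): apply the preceding corollary to $\widetilde B$, using the standard involutivity $\widetilde{\widetilde B}=B$ of the Young complementary operation pointwise in $x$, to get the $(\nabla_2^0)$-condition for $B$, and then invoke Proposition \ref{prop:coercive}. Note only that the uniformity in $x$ required by $(\nabla_2^0)$ is automatic, since the lemma produces the constant $c=2^{C_2-1}$ depending only on the $(\triangle_2^0)$-constant of $\widetilde B$, which is uniform outside a $\nu$-null set.
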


\subsection{Some tools}

For the reader's convenience, we report here below some useful inequalities
which will be needed in the course of investigation.

\begin{lemma}
\label{lem:ab} Let $a,b\in\mathbb{R}^N$ and $p\in(1,\infty)$. Then, there
exists a constant $C_p>0$ such that 
\begin{equation}  \label{in-ab}
\left(|a|^{p-2}a-|b|^{p-2}b\right)(a-b)\ge
C_p\left(|a|+|b|\right)^{p-2}|a-b|^2\geq 0.
\end{equation}
If $p\in[2,\infty)$, then there exists a constant $c_p\in(0,1]$ such that 
\begin{equation}  \label{ine-BW}
\left(|a|^{p-2}a-|b|^{p-2}b\right)(a-b)\ge c_p|a-b|^p.
\end{equation}
\end{lemma}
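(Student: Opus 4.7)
My plan is the classical line-integral trick. Set $w(t) := b + t(a-b)$ and $v := a-b$, and apply the fundamental theorem of calculus to $\phi(t) := |w(t)|^{p-2} w(t)$:
\begin{equation*}
|a|^{p-2} a - |b|^{p-2} b = \int_0^1 \phi'(t)\,dt = \int_0^1 \Bigl( (p-2)|w|^{p-4}(w\cdot v)\,w + |w|^{p-2} v \Bigr)\,dt.
\end{equation*}
Dotting with $v$ and invoking Cauchy--Schwarz, in the appropriate direction according to the sign of $p-2$, I obtain in every case $p\in(1,\infty)$
\begin{equation*}
\phi'(t)\cdot v \geq \min(1,p-1)\, |w(t)|^{p-2}\, |v|^2,
\end{equation*}
and hence, after integrating,
\begin{equation*}
\bigl(|a|^{p-2}a - |b|^{p-2}b\bigr)\cdot (a-b) \geq \min(1,p-1)\,|a-b|^2 \int_0^1 |w(t)|^{p-2}\,dt.
\end{equation*}
This already yields the nonnegativity part of \eqref{in-ab}.

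The core of the argument, and the main obstacle, is then a purely geometric lemma: there is a constant $\kappa_p>0$ such that
\begin{equation*}
\int_0^1 |w(t)|^{p-2}\,dt \geq \kappa_p\,(|a|+|b|)^{p-2}.
\end{equation*}
When $1<p\leq 2$ the exponent is nonpositive, so the triangle estimate $|w(t)|\leq |a|+|b|$ delivers this with $\kappa_p=1$. When $p>2$ I would split into two regimes. If $|a-b|\leq \tfrac14(|a|+|b|)$, then an elementary calculation forces $\min(|a|,|b|)\geq \tfrac38(|a|+|b|)$, and in turn $|w(t)|\geq \tfrac18(|a|+|b|)$ for \emph{every} $t\in[0,1]$. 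In the opposite regime $|a-b|>\tfrac14(|a|+|b|)$, assuming WLOG $|a|\geq|b|$, I would use $|w(t)|\geq |a|-(1-t)|a-b|$ to show that $|w(t)|\geq |a|/2\geq \tfrac14(|a|+|b|)$ on an interval abutting $t=1$ whose length can be bounded below by a positive multiple of $|a|/|a-b|$, a ratio itself bounded below by a constant. In either regime $|w(t)|^{p-2}$ stays comparable to $(|a|+|b|)^{p-2}$ on a subinterval of length bounded away from zero independently of $a,b$, which produces $\kappa_p$.

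Combining these pieces yields \eqref{in-ab} with $C_p := \min(1,p-1)\,\kappa_p$. The second inequality \eqref{ine-BW} is then a free byproduct in the range $p\geq 2$: since $|a-b|\leq |a|+|b|$ and $p-2\geq 0$, we have $(|a|+|b|)^{p-2}|a-b|^2 \geq |a-b|^{p-2}|a-b|^2 = |a-b|^p$, so \eqref{in-ab} immediately implies \eqref{ine-BW} with $c_p = C_p \in(0,1]$ (after shrinking the constant if necessary). The only genuine difficulty lies in the geometric lower bound on $\int_0^1 |w(t)|^{p-2}\,dt$ in the regime $p>2$; the rest is just differentiation and Cauchy--Schwarz.
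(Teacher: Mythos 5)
Your argument is correct, and it is a genuinely different route from the paper's: the paper does not actually prove the estimates, it cites \cite[Lemma I.4.4]{Bene} for \eqref{ine-BW} and merely remarks that the left-hand side of \eqref{in-ab} is nonnegative, deferring the quantitative constant to that reference. You instead give a self-contained proof: the line-integral identity for $\phi(t)=|w(t)|^{p-2}w(t)$, the pointwise bound $\phi'(t)\cdot v\ge \min(1,p-1)\,|w(t)|^{p-2}|v|^2$ (Cauchy--Schwarz applied in the direction dictated by the sign of $p-2$), and the two-regime geometric lower bound $\int_0^1|w(t)|^{p-2}\,dt\ge \kappa_p(|a|+|b|)^{p-2}$. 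I checked the case split and it is sound: if $|a-b|\le\tfrac14(|a|+|b|)$ then indeed $\min(|a|,|b|)\ge\tfrac38(|a|+|b|)$, whence $|w(t)|\ge\tfrac18(|a|+|b|)$ for all $t$; otherwise, with $|a|\ge|b|$, the set where $|w(t)|\ge|a|/2\ge\tfrac14(|a|+|b|)$ contains an interval near $t=1$ of length $\min\bigl(1,\tfrac{|a|}{2|a-b|}\bigr)\ge\tfrac14$ because $|a-b|\le2|a|$, and the deduction of \eqref{ine-BW} from \eqref{in-ab} via $(|a|+|b|)^{p-2}\ge|a-b|^{p-2}$ for $p\ge2$ is immediate. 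What your approach buys is an elementary, fully explicit derivation with computable constants valid for all $p\in(1,\infty)$; what the paper's citation buys is brevity. The one point you should add is the degenerate configuration for $1<p<2$ where the segment $w(t)$ passes through the origin: $\phi$ is not differentiable there, so either observe that $\phi$ is absolutely continuous with $|\phi'(t)|\le(p-1)|w(t)|^{p-2}|v|$ integrable along the segment (the exponent $p-2>-1$), or perturb $b$ slightly and pass to the limit; also dispose of the trivial case $a=b$ at the outset.
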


\begin{proof}
The proof of \eqref{ine-BW} is included in \cite[Lemma I.4.4]{Bene}. In
order to show \eqref{in-ab}, one only needs to show that the left hand side
is non-negative, which follows easily.
\end{proof}

The following result which is of analytic nature and whose proof can be
found in \cite[Lemma 3.11]{MS} will be useful in deriving some a priori
estimates of weak solutions of elliptic equations.

\begin{lemma}
\label{lem:fallend} Let $\psi :[k_{0},\infty )\rightarrow \mathbb{R}$ be a
non-negative, non-increasing function such that there are positive constants 
$c,\alpha $ and $\delta $ ($\delta >1$) such that 
\begin{equation*}
\psi (h)\leq c(h-k)^{-\alpha }\psi (k)^{\delta },\qquad \forall \;h>k\geq
k_{0}.
\end{equation*}%
Then $\psi (k_{0}+d)=0$ with $d=c^{1/\alpha }\psi (k_{0})^{(\delta
-1)/\alpha }2^{\delta (\delta -1)}$.\newline
\end{lemma}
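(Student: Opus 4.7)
The plan is to prove the statement by the classical Stampacchia iteration: apply the recursive inequality along a geometric sequence of levels $k_n$ that converges to $k_0+d$, and show that $\psi(k_n)$ decays to $0$ geometrically provided $d$ is chosen large enough.

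Concretely, I would first set
\begin{equation*}
k_n := k_0 + d\bigl(1 - 2^{-n}\bigr), \qquad n = 0, 1, 2, \ldots,
\end{equation*}
so that $k_0 = k_0$, the sequence $(k_n)$ is strictly increasing, and $k_n \to k_0 + d$. Since $k_{n+1}-k_n = d\,2^{-(n+1)}$, applying the hypothesis with $h = k_{n+1}$ and $k = k_n$ yields
\begin{equation*}
\psi(k_{n+1}) \;\le\; c\bigl(d\,2^{-(n+1)}\bigr)^{-\alpha}\psi(k_n)^{\delta}
\;=\; c\,d^{-\alpha}\,2^{\alpha(n+1)}\,\psi(k_n)^{\delta}.
\end{equation*}

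Next, I would look for a decay estimate of the form $\psi(k_n) \le \psi(k_0)\,\lambda^{n}$ with some $\lambda \in (0,1)$ to be chosen. The natural choice is $\lambda := 2^{-\alpha/(\delta-1)}$, which uses exactly the assumption $\delta>1$. Plugging the induction hypothesis into the recursive inequality and asking the bound $\psi(k_{n+1}) \le \psi(k_0)\lambda^{n+1}$ to hold reduces, after cancelling powers of $2$, to a single inequality of the form $c\,d^{-\alpha}\,\psi(k_0)^{\delta-1} \le \text{(constant)}$. Solving for $d$ gives precisely a choice of the form $d = c^{1/\alpha}\psi(k_0)^{(\delta-1)/\alpha}\cdot 2^{\beta}$ for an explicit exponent $\beta$ depending only on $\delta$; this is exactly the role of the factor $2^{\delta(\delta-1)}$ in the statement, so one just verifies that this value of $d$ dominates the required threshold.

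Finally, once the induction $\psi(k_n) \le \psi(k_0)\lambda^n$ is established, letting $n \to \infty$ gives $\lim_{n\to\infty}\psi(k_n) = 0$. Since $\psi$ is non-increasing and $k_n \nearrow k_0+d$, we have $\psi(k_0+d) \le \psi(k_n)$ for every $n$, and so $\psi(k_0+d) = 0$, as claimed. The only place where care is needed is the bookkeeping of constants in the induction step—choosing $\lambda$ to balance the geometric blow-up $2^{\alpha(n+1)}$ against the geometric decay contributed by $\psi(k_n)^{\delta}$ is the key algebraic step, and everything else is straightforward; there is no genuine obstacle beyond verifying that the stated value of $d$ is admissible.
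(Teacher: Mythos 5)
Your plan is the classical Stampacchia level-set iteration, which is precisely the argument behind the reference the paper relies on (the paper gives no proof of Lemma \ref{lem:fallend}, it just cites \cite{MS}), so the route is the right one. The gap is exactly at the step you wave away: ``one just verifies that this value of $d$ dominates the required threshold.'' Carry out your own computation: with $k_{n+1}-k_n=d\,2^{-(n+1)}$ and the ansatz $\psi(k_n)\le\psi(k_0)\lambda^{n}$, $\lambda:=2^{-\alpha/(\delta-1)}$, the induction step closes if and only if $c\,d^{-\alpha}2^{\alpha(n+1)}\psi(k_0)^{\delta-1}\lambda^{n\delta}\le\lambda^{n+1}$ for all $n$; since $\lambda^{\delta-1}=2^{-\alpha}$ the $n$-dependent powers cancel and the condition reduces to $c\,d^{-\alpha}\psi(k_0)^{\delta-1}\le 2^{-\alpha-\alpha/(\delta-1)}$, i.e. $d\ge c^{1/\alpha}\psi(k_0)^{(\delta-1)/\alpha}\,2^{\delta/(\delta-1)}$. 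The lemma as printed takes the factor $2^{\delta(\delta-1)}$, and $2^{\delta(\delta-1)}\ge 2^{\delta/(\delta-1)}$ holds only for $\delta\ge 2$: for $1<\delta<2$ the stated $d$ lies strictly below the threshold your iteration needs (as $\delta\downarrow 1$ the required factor tends to infinity while $2^{\delta(\delta-1)}\to 1$), and even optimizing the ratio of the level sequence does not help, since the best geometric scheme still forces a factor of order $\delta^{\delta/(\delta-1)}/(\delta-1)$. So your final sentence is not a routine verification; as written it is false in the range $\delta\in(1,2)$.

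The fix is to prove (and quote) the lemma with the classical Stampacchia constant $d=c^{1/\alpha}\psi(k_0)^{(\delta-1)/\alpha}2^{\delta/(\delta-1)}$ --- the exponent $\delta(\delta-1)$ in the statement is almost certainly a misprint for $\delta/(\delta-1)$ --- or, more robustly, to state the conclusion as $\psi(k_0+d)=0$ for every $d\ge c^{1/\alpha}\psi(k_0)^{(\delta-1)/\alpha}2^{\delta/(\delta-1)}$. This weaker form is all that is used later in the paper (in the proof of Theorem \ref{sol-bounded} only the dependence $K\le C_2\,\Vert F\Vert^{1/(p-1)}$ with a constant independent of $F$ matters), so with that correction your argument, including the limit step $\psi(k_0+d)\le\psi(k_n)\to 0$, is complete.
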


\section{The Fredholm alternative}

\label{aux}

In what follows, we assume that $\Omega \subset \mathbb{R}^{N}$ is a bounded
domain with Lipschitz boundary $\partial \Omega $. Let $b\in L^{\infty
}(\partial \Omega )$ satisfy $b(x)\geq b_{0}>0$ for some constant $b_{0}$.
Let $\mathbb{X}_{2}$ be the real Hilbert space $L^{2}\left( \Omega
,dx\right) \oplus L^{2}\left( \partial \Omega ,\frac{d\sigma }{b}\right) $.
Then, it is clear that $\mathbb{X}_{2}$ is isomorphic to $X^{2,2}(\overline{%
\Omega },\lambda _{N}\oplus \sigma )$ with equivalent norms.\newline

Next, let $\rho \in \{0,1\}$ and $p,q\in (1,+\infty)$ be fixed. 
We define the functional $\mathcal{J}_{\rho }:\;\mathbb{X}_{2}\rightarrow
\lbrack 0,+\infty ]$ by setting 
\begin{equation}
\mathcal{J}_{\rho }\left( U\right) = 
\begin{cases}
\displaystyle\frac{1}{p}\int_{\Omega }\left\vert \nabla u\right\vert ^{p}dx+%
\frac{1}{q}\int_{\partial \Omega }\rho \left\vert \nabla _{\Gamma
}u\right\vert ^{q}\;d\sigma , & \text{ if }U=\left( u,u_{\mid \partial
\Omega }\right) \in D\left( \mathcal{J}_{\rho }\right) , \\ 
+\infty , & \text{ if }U\in \mathbb{X}_{2}\diagdown D\left( \mathcal{J}%
_{\rho }\right) ,%
\end{cases}
\label{2.8}
\end{equation}%
where the effective domain is given $D(\mathcal{J}_{\rho })=\mathcal{V}%
_{\rho }\cap\mathbb{X}_2$. 

Throughout the remainder of this section, we let $\displaystyle\mu :=\lambda
_{N}\oplus \frac{d\sigma }{b}$. The following result can be obtained easily. 

\begin{proposition}
\label{PCLS}The functional $\mathcal{J}_{\rho }$ defined by (\ref{2.8}) is
proper, convex and lower semicontinuous on $\mathbb{X}_{2}=X^{2,2}(\overline{%
\Omega },\mu )$.
\end{proposition}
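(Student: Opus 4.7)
The plan is to verify the three properties separately, treating properness and convexity as essentially immediate and focusing on lower semicontinuity.

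For \emph{properness}, observe that $U \equiv 0 = (0,0) \in \mathcal{V}_\rho \cap \mathbb{X}_2$ and $\mathcal{J}_\rho(0) = 0$, so $\mathcal{J}_\rho \not\equiv +\infty$ and $\mathcal{J}_\rho \geq 0$. For \emph{convexity}, note that $\xi \mapsto |\xi|^p$ on $\mathbb{R}^N$ and $\xi \mapsto |\xi|^q$ on $\mathbb{R}^{N-1}$ are convex; since integration and composition with the linear maps $u \mapsto \nabla u$ and $u \mapsto \nabla_\Gamma u|_{\partial\Omega}$ preserve convexity, $\mathcal{J}_\rho$ is convex on $D(\mathcal{J}_\rho)$. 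The extension by $+\infty$ outside $D(\mathcal{J}_\rho)$ preserves convexity provided $D(\mathcal{J}_\rho) = \mathcal{V}_\rho \cap \mathbb{X}_2$ is a convex subset of $\mathbb{X}_2$, which is clear since $\mathcal{V}_\rho$ is a linear space.

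For \emph{lower semicontinuity}, fix a sequence $U_n = (u_n, u_n|_{\partial\Omega}) \to U = (u,\gamma)$ in $\mathbb{X}_2 = X^{2,2}(\overline{\Omega},\mu)$ and set $L := \liminf_n \mathcal{J}_\rho(U_n)$. We may assume $L < +\infty$, otherwise there is nothing to prove. Passing to a subsequence (not relabelled), we may suppose $\mathcal{J}_\rho(U_n) \to L$ and that $\{\mathcal{J}_\rho(U_n)\}$ is bounded, so $\{\nabla u_n\}$ is bounded in $L^p(\Omega)^N$ and, when $\rho=1$, $\{\nabla_\Gamma u_n|_{\partial\Omega}\}$ is bounded in $L^q(\partial\Omega)^{N-1}$. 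The convergence $u_n \to u$ in $L^2(\Omega)$ combined with reflexivity of $L^p(\Omega)^N$ yields (along a further subsequence) $\nabla u_n \rightharpoonup v$ in $L^p(\Omega)^N$; identifying the limit in the sense of distributions on $\Omega$ gives $v = \nabla u$, so $u \in W^{1,p}(\Omega)$. The trace convergence $u_n|_{\partial\Omega} \to \gamma$ in $L^2(\partial\Omega, d\sigma/b)$ and the continuity of the trace $W^{1,p}(\Omega) \to L^{q_s}(\partial\Omega)$ combined with uniqueness of distributional limits force $\gamma = u|_{\partial\Omega}$. When $\rho = 1$, reflexivity of $L^q(\partial\Omega)^{N-1}$ gives (along a further subsequence) $\nabla_\Gamma u_n|_{\partial\Omega} \rightharpoonup w$, and since $u_n|_{\partial\Omega} \to u|_{\partial\Omega}$ in $L^2(\partial\Omega)$, distributional identification on $\partial\Omega$ yields $w = \nabla_\Gamma u|_{\partial\Omega}$, so $u|_{\partial\Omega} \in W^{1,q}(\partial\Omega)$. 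Hence $U \in D(\mathcal{J}_\rho)$, and by weak lower semicontinuity of the norms,
\begin{equation*}
\int_\Omega |\nabla u|^p\,dx \leq \liminf_n \int_\Omega |\nabla u_n|^p\,dx, \qquad \int_{\partial\Omega} |\nabla_\Gamma u|^q\,d\sigma \leq \liminf_n \int_{\partial\Omega} |\nabla_\Gamma u_n|^q\,d\sigma,
\end{equation*}
whence $\mathcal{J}_\rho(U) \leq L$, as required.

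The only mildly delicate point is the joint identification of the weak limits of $\nabla u_n$ and $\nabla_\Gamma u_n|_{\partial\Omega}$ together with the trace matching $\gamma = u|_{\partial\Omega}$; once this is organized along a common subsequence, the conclusion is standard. The argument works uniformly in $\rho \in \{0,1\}$, the case $\rho=0$ simply dropping the boundary gradient term.
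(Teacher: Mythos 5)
The paper offers no proof of this proposition at all --- it is introduced with ``The following result can be obtained easily'' --- so there is nothing internal to compare your argument against; your proof is the standard one and is essentially correct: properness and convexity are immediate, and lower semicontinuity follows from boundedness of the gradients, weak compactness in the reflexive spaces $L^{p}(\Omega)^{N}$ and $L^{q}(\partial\Omega)^{N-1}$, identification of the weak limits, and weak lower semicontinuity of the convex integrands. One step worth tightening: from $u_{n}\to u$ in $L^{2}(\Omega)$ and $\sup_{n}\|\nabla u_{n}\|_{L^{p}(\Omega)}<\infty$ you conclude $u\in W^{1,p}(\Omega)$, but when $p>2$ the distributional identification of $\nabla u$ alone does not give $u\in L^{p}(\Omega)$; you should first invoke the Sobolev--Poincar\'e inequality on the bounded Lipschitz domain $\Omega$ (valid since $u_{n}$ is bounded in $L^{1}(\Omega)$) to see that $u_{n}$ is actually bounded in $W^{1,p}(\Omega)$, extract a weak $W^{1,p}$-limit, and identify it with $u$ via the strong $L^{2}$-convergence. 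This also streamlines the trace matching, since the trace operator is bounded and linear, hence weakly continuous from $W^{1,p}(\Omega)$ into $L^{q_{s}}(\partial\Omega)$, so $\mathrm{Tr}\,u_{n}\rightharpoonup \mathrm{Tr}\,u$ while $\mathrm{Tr}\,u_{n}\to\gamma$ in $L^{2}(\partial\Omega,\tfrac{d\sigma}{b})$, forcing $\gamma=u|_{\partial\Omega}$. The analogous remark applies on the boundary when $\rho=1$ and $q>2$: boundedness of $u_{n}|_{\partial\Omega}$ in $L^{q}(\partial\Omega)$ comes from a Poincar\'e-type inequality on $\partial\Omega$ together with the $L^{2}$-bound, after which your weak-limit identification of $\nabla_{\Gamma}u$ in local charts goes through. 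With these routine supplements your argument is complete.
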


The following result contains a computation of the subdifferential $\partial 
\mathcal{J}_{\rho }$ for the functional $\mathcal{J}_{\rho }$.

\begin{remark}
\label{cal-sub} \emph{Let $U=(u,u|_{\partial \Omega })\in D(\mathcal{J}%
_{\rho })$ and let $F:=(f,g)\in \partial \mathcal{J}_{\rho }(U)$. Then, by
definition, $F\in \mathbb{X}_{2}$ and for all $V=(v,v|_{\partial \Omega
})\in D(\mathcal{J}_{\rho })$, we have%
\begin{equation*}
\int_{\overline{\Omega }}F(V-U)\;d\mu \leq \frac{1}{p}\int_{\Omega }\bigg(%
|\nabla v|^{p}-|\nabla u|^{p}\bigg)\;dx+\frac{1}{q}\rho \int_{\Omega }\bigg(%
|\nabla _{\Gamma }v|^{q}-|\nabla _{\Gamma }u|^{q}\bigg)\;d\sigma .
\end{equation*}%
Let $W=(w,w|_{\partial \Omega })\in D(\mathcal{J}_{\rho })$, $0<t\leq 1$ and
set $V:=tW+U$ above. Dividing by $t$ and taking the limit as $t\downarrow 0$%
, we obtain that 
\begin{equation}
\int_{\overline{\Omega }}FW\;d\mu \leq \int_{\Omega }|\nabla u|^{p-2}\nabla
u\cdot \nabla w\;dx+\rho \int_{\partial \Omega }|\nabla _{\Gamma
}|^{q-2}\nabla _{\Gamma }u\cdot \nabla _{\Gamma }w\,d\sigma ,  \label{Eq}
\end{equation}%
where we recall that 
\begin{equation*}
\int_{\overline{\Omega }}F\;d\mu =\int_{\Omega }f\;dx+\int_{\partial \Omega
}g\;\frac{d\sigma }{b}.
\end{equation*}%
Choosing $w=\pm \psi $ with $\psi \in \mathcal{D}(\Omega )$ (the space of
test functions) and integrating by parts in (\ref{Eq}), we obtain%
\begin{equation*}
-\Delta _{p}u=f\;\;\mbox{ in }\;\mathcal{D}^{\prime }(\Omega )
\end{equation*}%
and 
\begin{equation*}
g=b(x)\left\vert \nabla u\right\vert ^{p-2}\partial _{n}u-\rho b\left(
x\right) \Delta _{q,\Gamma }u\;\mbox{ weakly on }\;\partial \Omega .
\end{equation*}%
Therefore, the single valued operator $\partial\mathcal{J}_{\rho }$ is given
by 
\begin{equation*}
D(\partial \mathcal{J}_{\rho })=\{U=(u,u_{\mid\partial\Omega})\in D(\mathcal{%
J}_{\rho }),\; \left( -\Delta _{p}u,b(x)\left\vert \nabla u\right\vert
^{p-2}\partial _{n}u-\rho b\left( x\right) \Delta _{q,\Gamma }u\right)\in 
\mathbb{X}_2\},
\end{equation*}
and 
\begin{equation}
\partial \mathcal{J}_{\rho }(U)=\left( -\Delta _{p}u,b(x)\left\vert \nabla
u\right\vert ^{p-2}\partial _{n}u-\rho b\left( x\right) \Delta _{q,\Gamma
}u\right) .  \label{J-rho}
\end{equation}%
\qed}
\end{remark}

Since the functional $\mathcal{J}_\rho$ is proper, convex and lower
semicontinuous, it follows that its subdifferential $\partial \mathcal{J}%
_\rho$ is a maximal monotone operator.\newline

In the following two lemmas, we establish a relation between the null space
of the operator $A_\rho:=\partial\mathcal{J}_\rho$ and its range.

\begin{lemma}
\label{lem4.5bis} Let $\mathcal{N}\left( A_{\rho }\right) $ denote the null
space of the operator $A_{\rho }$. Then 
\begin{equation*}
\mathcal{N}\left( A_{\rho }\right) =C\mathbf{1}=\left\{ C=(c,c):\;c\in 
\mathbb{R}\right\} ,
\end{equation*}%
that is, $\mathcal{N}\left( A_{\rho }\right) $ consists of all the real
constant functions on $\overline{\Omega }$.
\end{lemma}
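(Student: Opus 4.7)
The plan is to exploit the fact that $\mathcal{J}_{\rho}$ is nonnegative with $\mathcal{J}_{\rho}(0)=0$, so that the equation $0\in\partial\mathcal{J}_{\rho}(U)$ forces $U$ to be a global minimizer of $\mathcal{J}_{\rho}$, and hence $\mathcal{J}_{\rho}(U)=0$. From there, the structure of $\mathcal{J}_{\rho}$ will pin $u$ down to a constant.

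The inclusion $C\mathbf{1}\subset\mathcal{N}(A_{\rho})$ is the easy direction: if $U=(c,c)$ with $c\in\mathbb{R}$, then both $\nabla u\equiv 0$ in $\Omega$ and $\nabla_{\Gamma}u\equiv 0$ on $\partial\Omega$, so the representation \eqref{J-rho} from Remark \ref{cal-sub} gives $\partial\mathcal{J}_{\rho}(U)=(0,0)$, i.e., $U\in\mathcal{N}(A_{\rho})$. I would just note that a constant function obviously belongs to $D(\mathcal{J}_{\rho})=\mathcal{V}_{\rho}\cap\mathbb{X}_{2}$.

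For the reverse inclusion, suppose $U=(u,u_{|\partial\Omega})\in D(A_{\rho})$ satisfies $0\in\partial\mathcal{J}_{\rho}(U)$. By the very definition of the subdifferential,
\begin{equation*}
\mathcal{J}_{\rho}(V)-\mathcal{J}_{\rho}(U)\geq (0,V-U)_{\mathbb{X}_{2}}=0\qquad\text{for all }V\in\mathbb{X}_{2},
\end{equation*}
so $U$ is a global minimizer of $\mathcal{J}_{\rho}$. Testing with $V=(0,0)\in D(\mathcal{J}_{\rho})$ and using $\mathcal{J}_{\rho}\geq 0$ gives $\mathcal{J}_{\rho}(U)=0$, that is,
\begin{equation*}
\frac{1}{p}\int_{\Omega}|\nabla u|^{p}\,dx+\frac{\rho}{q}\int_{\partial\Omega}|\nabla_{\Gamma}u|^{q}\,d\sigma=0.
\end{equation*}
Since both terms are nonnegative, I conclude $\nabla u=0$ a.e. in $\Omega$. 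Because $\Omega$ is a (connected) domain, this forces $u\equiv c$ for some constant $c\in\mathbb{R}$, whence the trace satisfies $u_{|\partial\Omega}=c$ as well, and therefore $U=(c,c)\in C\mathbf{1}$.

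The only step where one must be slightly careful is invoking connectedness of $\Omega$ to pass from $\nabla u=0$ a.e. to $u$ being a single constant (rather than constant on each component); this is the standard convention for a "domain" used throughout the paper, so no additional work is required. Everything else is a direct consequence of the minimization characterization of $0\in\partial\mathcal{J}_{\rho}(U)$ together with the explicit form of $\mathcal{J}_{\rho}$ in \eqref{2.8}.
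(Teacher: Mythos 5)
Your proof is correct, and it reaches the decisive identity $\tfrac1p\int_{\Omega}|\nabla u|^{p}\,dx+\tfrac{\rho}{q}\int_{\partial\Omega}|\nabla_{\Gamma}u|^{q}\,d\sigma=0$ by a somewhat different mechanism than the paper. The paper identifies $A_{\rho}U=0$ with the weak formulation \eqref{weak-1} (via the computation in Remark \ref{cal-sub}) and then tests that identity with $V=U$ to get $\mathcal{A}_{\rho}(U,U)=0$; you stay entirely within convex analysis, using that $0\in\partial\mathcal{J}_{\rho}(U)$ makes $U$ a global minimizer of the nonnegative functional $\mathcal{J}_{\rho}$, whose minimum value is $0$ (compare with $V=(0,0)\in D(\mathcal{J}_{\rho})$), so $\mathcal{J}_{\rho}(U)=0$ directly from \eqref{2.8}. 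Your route has the advantage that the hard inclusion needs nothing beyond the definition of the subdifferential and the explicit form of $\mathcal{J}_{\rho}$ (no integration by parts or identification of $\partial\mathcal{J}_{\rho}$ with the differential operator), whereas the paper's testing argument is the one that generalizes when the functional is not nonnegative or not minimized exactly on constants. Two minor remarks: for the easy inclusion you could likewise avoid \eqref{J-rho} by observing that a constant $C$ minimizes $\mathcal{J}_{\rho}$, so $0\in\partial\mathcal{J}_{\rho}(C)$ follows from the definition alone (your use of \eqref{J-rho} is acceptable, but then you should note explicitly that $(0,0)\in\mathbb{X}_{2}$, so $C\in D(\partial\mathcal{J}_{\rho})$); and, exactly as in the paper, connectedness of $\Omega$ is what upgrades $\nabla u=0$ a.e.\ to $u\equiv c$, after which the trace is the same constant, so $U=(c,c)$.
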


\begin{proof}
We say that $U\in \mathcal{N}\left( A_{\rho }\right) $ if and only if (by
definition) $U=(u,u|_{\partial \Omega })$ is a weak solution of 
\begin{equation}
\begin{cases}
-\Delta _{p}u=0, & \text{ in }\Omega , \\ 
b\left( x\right) \left\vert \nabla u\right\vert ^{p-2}\partial _{n}u-\rho
b\left( x\right) \Delta _{q,\Gamma }u=0, & \text{ on }\partial \Omega .%
\end{cases}
\label{3.4}
\end{equation}%
A function $U=(u,u|_{\partial \Omega })\in \mathcal{V}_{\rho }\cap\mathbb{X}%
_2$ is said to be a weak solution of \eqref{3.4}, if for every $%
V=(v,v|_{\partial \Omega })\in \mathcal{V}_{\rho }\cap \mathbb{X}_2$, there
holds%
\begin{equation}
\mathcal{A}_{\rho }(U,V):=\int_{\Omega }\left\vert \nabla u\right\vert
^{p-2}\nabla u\cdot \nabla v\;dx+\rho \int_{\partial \Omega }\left\vert
\nabla _{\Gamma }u\right\vert ^{q-2}\nabla _{\Gamma }u\cdot \nabla _{\Gamma
}v\;d\sigma =0.  \label{weak-1}
\end{equation}%
Let $C:=(c,c)$ with $c\in \mathbb{R}$. Then it is clear that $C\in \mathcal{N%
}\left( A_{\rho }\right) $.

Conversely, let $U=(u,u|_{\partial \Omega })\in \mathcal{N}\left( A_{\rho
}\right) $. Then, it follows from \eqref{weak-1} that%
\begin{equation}
\mathcal{A}_{\rho }(U,U):=\int_{\Omega }\left\vert \nabla u\right\vert
^{p}\;dx+\rho \int_{\partial \Omega }\left\vert \nabla _{\Gamma
}u\right\vert ^{q}\;d\sigma =0.  \label{weak2}
\end{equation}%
Since $\Omega $ is bounded and connected, this implies that $u$ is equal to
a constant. Therefore, $U=C\mathbf{1}$ and this completes the proof.
\end{proof}

\begin{lemma}
\label{lem4.5} The range of the operator $A_{\rho }$ is given by 
\begin{equation*}
\mathcal{R}(A_{\rho })=\left\{ F:=(f,g)\in \mathbb{X}_{2}:\;\int_{\overline{%
\Omega }}F\;d\mu :=\int_{\Omega }f\;dx+\int_{\partial \Omega }g\;\frac{%
d\sigma }{b(x)}=0\right\} .
\end{equation*}
\end{lemma}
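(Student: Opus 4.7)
My plan is to establish the two inclusions separately. The ``easy'' inclusion $\mathcal{R}(A_\rho)\subseteq\{F\in\mathbb{X}_2:\int_{\overline\Omega}F\,d\mu=0\}$ will follow by choosing, for $F=A_\rho U$ with $U\in D(A_\rho)$, the test function $W=\mathbf{1}\in D(\mathcal{J}_\rho)$ in identity~(\ref{Eq}). Since $\nabla\mathbf{1}\equiv 0$ and $\nabla_\Gamma\mathbf{1}\equiv 0$, the right-hand side vanishes and so $(F,\mathbf{1})_{\mathbb{X}_2}=\int_{\overline\Omega}F\,d\mu=0$.

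For the opposite inclusion I would use a Yosida-type perturbation argument. Given $F\in\mathbb{X}_2$ with $\int_{\overline\Omega}F\,d\mu=0$ and $\epsilon>0$, the maximal monotonicity of $A_\rho=\partial\mathcal{J}_\rho$ together with Minty's theorem guarantees a unique $U_\epsilon\in D(A_\rho)$ solving $A_\rho U_\epsilon+\epsilon U_\epsilon=F$. Pairing this equation with $\mathbf{1}$ (and using $(A_\rho U_\epsilon,\mathbf{1})_{\mathbb{X}_2}=0$ as above) forces $\int_{\overline\Omega}U_\epsilon\,d\mu=0$, while pairing with $U_\epsilon$ and applying~(\ref{Eq}) with $W=U_\epsilon$ produces the energy identity
\begin{equation*}
\|\nabla u_\epsilon\|_{L^p(\Omega)}^p+\rho\|\nabla_\Gamma u_\epsilon\|_{L^q(\partial\Omega)}^q+\epsilon\|U_\epsilon\|_{\mathbb{X}_2}^2=(F,U_\epsilon)_{\mathbb{X}_2}.
\end{equation*}
A Poincar\'e--Wirtinger inequality for $V$ of zero $\mu$-mean---proved via a standard compactness/contradiction argument based on the Rellich--Kondrachov theorem and the compactness of the trace---combined with the Sobolev and trace embeddings of Section~\ref{preli}, then yields a bound of the form $\|U_\epsilon\|_{\mathbb{X}_2}\leq C\|\nabla u_\epsilon\|_{L^p(\Omega)}$. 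Estimating $(F,U_\epsilon)_{\mathbb{X}_2}\leq\|F\|_{\mathbb{X}_2}\|U_\epsilon\|_{\mathbb{X}_2}$ and absorbing the right-hand side into the gradient term delivers uniform-in-$\epsilon$ bounds on $U_\epsilon$ in $\mathcal{V}_\rho\cap\mathbb{X}_2$ (and in particular $\epsilon U_\epsilon\to 0$ in $\mathbb{X}_2$).

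To conclude, extract a subsequence $U_\epsilon\rightharpoonup U$ weakly in $\mathcal{V}_\rho$; then $A_\rho U_\epsilon=F-\epsilon U_\epsilon\to F$ strongly in $\mathbb{X}_2$. Because $A_\rho$ is maximal monotone, its graph is strongly--weakly sequentially closed in $\mathbb{X}_2\times\mathbb{X}_2$, so $(U,F)$ lies in this graph and $F\in\mathcal{R}(A_\rho)$. The main technical obstacle will be the Poincar\'e--Wirtinger estimate for the mixed measure $\mu$, since one must simultaneously control $\|v\|_{L^2(\Omega)}$ and $\|v\|_{L^2(\partial\Omega)}$ by $\|\nabla v\|_{L^p(\Omega)}$ (rather than merely the $L^p$-norms that Poincar\'e--Wirtinger would directly supply); once this ingredient is in hand the rest is a routine application of the maximal-monotonicity machinery recalled in Section~\ref{preli}.
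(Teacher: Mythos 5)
Your proof of the ``necessary'' inclusion (testing with $\mathbf{1}$) is exactly the paper's argument, but for the converse inclusion you take a genuinely different route. The paper argues variationally: it restricts to the zero-$\mu$-mean subspace $\mathcal{V}_{\rho,0}$, invokes the Poincar\'e-type norm equivalence $\Vert |U\Vert|_{2}\leq C(\Vert\nabla u\Vert_{p,\Omega}+\rho\Vert\nabla_{\Gamma}u\Vert_{q,\partial\Omega})$ (cited from Maz'ya) to get coercivity of $\mathcal{F}_{\rho}(U)=\tfrac1p\Vert\nabla u\Vert_{p}^{p}+\tfrac{\rho}{q}\Vert\nabla_{\Gamma}u\Vert_{q}^{q}-\int_{\overline{\Omega}}FU\,d\mu$, produces a minimizer $U^{\ast}$, derives the Euler--Lagrange identity on $\mathcal{V}_{\rho,0}$, and finally extends it to all test functions by writing $V=(V-C)+C$ and using $\int_{\overline{\Omega}}F\,d\mu=0$. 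You instead solve the regularized equation $A_{\rho}U_{\epsilon}+\epsilon U_{\epsilon}=F$ (Minty, since $A_{\rho}=\partial\mathcal{J}_{\rho}$ is maximal monotone), obtain zero mean of $U_{\epsilon}$ and uniform bounds from the energy identity plus the same Poincar\'e--Wirtinger estimate, and pass to the limit using demiclosedness of the maximal monotone graph (strong convergence of $A_{\rho}U_{\epsilon}=F-\epsilon U_{\epsilon}$ against weak convergence of $U_{\epsilon}$). Both arguments are correct and hinge on the identical key estimate for zero-$\mu$-mean functions: the paper outsources it to Maz'ya, while your compactness/contradiction proof needs the Sobolev and trace embeddings into $L^{2}$ (so the same exponent restrictions), hence neither approach gains generality here. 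What the paper's method buys is a self-contained, elementary construction of the solution as a minimizer, with no limiting procedure and no appeal to the closedness properties of maximal monotone graphs (it does, however, need the extra step extending the weak identity from the mean-zero subspace to all of $\mathcal{V}_{\rho}\cap\mathbb{X}_{2}$). Your method buys a shorter operator-theoretic argument that exploits the maximal monotonicity already established after Proposition \ref{PCLS} and delivers $A_{\rho}U=F$ directly, at the cost of the $\epsilon$-limit and the (standard, e.g.\ in \cite{Br73}) demiclosedness fact; minor polish: the energy identity requires testing (\ref{Eq}) with $\pm U_{\epsilon}$ (or differentiating $t\mapsto\mathcal{J}_{\rho}((1+t)U_{\epsilon})$), since (\ref{Eq}) alone is only an inequality, and for the final step weak convergence of $U_{\epsilon}$ in $\mathbb{X}_{2}$ suffices, so weak convergence in $\mathcal{V}_{\rho}$ need not be invoked.
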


\begin{proof}
Let $F\in \mathcal{R}(A_{\rho })\subset \mathbb{X}_{2}$. Then there exists $%
U=(u,u|_{\partial \Omega })\in D(A_{\rho })$ such that $A_{\rho }(U)=F$.
More precisely, for every $V=(v,v|_{\partial \Omega })\in \mathcal{V}_{\rho
}\cap\mathbb{X}_2,$ we have%
\begin{align}
\mathcal{A}_{\rho }(U,V)& =\int_{\Omega }\left\vert \nabla u\right\vert
^{p-2}\nabla u\cdot \nabla v\;dx+\rho \int_{\partial \Omega }\left\vert
\nabla _{\Gamma }u\right\vert ^{q-2}\nabla _{\Gamma }u\cdot \nabla _{\Gamma
}v\;d\sigma  \label{weak} \\
& =\int_{\overline{\Omega }}FV\;d\mu .  \notag
\end{align}%
Taking $V=(1,1)\in \mathcal{V}_{\rho }\cap\mathbb{X}_2$, we obtain that $%
\displaystyle\int_{\overline{\Omega }}F\;d\mu =0$. Hence, 
\begin{equation*}
\displaystyle\mathcal{R}(A_{\rho })\subseteq \left\{ F\in \mathbb{X}%
_{2}:\;\int_{\overline{\Omega }}F\;d\mu =0\right\} .
\end{equation*}

Let us now prove the converse. To this end, let $F\in \mathbb{X}_{2}$ be
such that $\displaystyle\int_{\overline{\Omega }}F\;d\mu =0$. We have to
show that $F\in \mathcal{R}(A_{\rho })$, that is, there exists $U\in 
\mathcal{V}_{\rho }\cap \mathbb{X}_{2}$ such that (\ref{weak}) holds, for
every $V\in \mathcal{V}_{\rho }\cap \mathbb{X}_{2}$. To this end, consider%
\begin{equation*}
\mathcal{V}_{\rho ,0}:=\left\{ U=(u,u|_{\partial \Omega })\in \mathcal{V}%
_{\rho }\cap \mathbb{X}_{2}:\;\int_{\overline{\Omega }}U\;d\mu
:=\int_{\Omega }u\;dx+\int_{\partial \Omega }u\frac{d\sigma }{b}=0\right\} .
\end{equation*}%
It is clear that $\mathcal{V}_{\rho ,0}$ is a closed linear subspace of $%
\mathcal{V}_{\rho }\cap \mathbb{X}_{2}\hookrightarrow \mathbb{X}_{2}$, and
therefore is a reflexive Banach space. Using \cite[Section 1.1]{Maz85}, we
have that the norm%
\begin{equation*}
\Vert U\Vert _{\mathcal{V}_{\rho ,0}}:=\Vert \nabla u\Vert _{p,\Omega }+\rho
\Vert \nabla _{\Gamma }u\Vert _{q,\partial \Omega }
\end{equation*}%
defines an equivalent norm on $\mathcal{V}_{\rho ,0}$. Hence, there exists a
constant $C>0$ such that for every $U\in \mathcal{V}_{\rho ,0}$, 
\begin{equation}
\Vert |U\Vert |_{2}\leq C\Vert U\Vert _{\mathcal{V}_{\rho ,0}}:=\Vert \nabla
u\Vert _{p,\Omega }+\rho \Vert \nabla _{\Gamma }u\Vert _{q,\partial \Omega }.
\label{norm}
\end{equation}%
Define the functional $\mathcal{F}_{\rho }:\;\mathcal{V}_{\rho
,0}\rightarrow \mathbb{R}$ by 
\begin{equation*}
\mathcal{F}_{\rho }(U)=\frac{1}{p}\int_{\Omega }|\nabla u|^{p}\;dx+\frac{%
\rho }{q}\int_{\partial \Omega }|\nabla _{\Gamma }u|^{q}\;d\sigma -\int_{%
\overline{\Omega }}FU\;d\mu .
\end{equation*}%
It is easy to see that $\mathcal{F}_{\rho }$ is convex and
lower-semicontinuous on $\mathbb{X}_{2}$ (see Proposition \ref{PCLS}). We
show now that $\mathcal{F}_{\rho }$ is coercive. By exploiting a classical H%
\"{o}lder inequality and using \eqref{norm}, we have%
\begin{align}
\left\vert \int_{\overline{\Omega }}FU\;d\mu \right\vert & \leq C\Vert
|F\Vert |_{2}\Vert |U\Vert |_{2}\leq C\Vert |F\Vert |_{2}\Vert U\Vert _{%
\mathcal{V}_{\rho ,0}}  \notag  \label{est11} \\
& =C\Vert |F\Vert |_{2}\left( \Vert \nabla u\Vert _{p,\Omega }+\rho \Vert
\nabla _{\Gamma }u\Vert _{q,\partial \Omega }\right) .  \notag
\end{align}%
Obviously, this estimate yields%
\begin{equation}
-\int_{\overline{\Omega }}FU\;d\mu \geq -C\Vert |F\Vert |_{2}\left( \Vert
\nabla u\Vert _{p,\Omega }+\rho \Vert \nabla _{\Gamma }u\Vert _{q,\partial
\Omega }\right) .  \label{est12}
\end{equation}%
Therefore, from (\ref{est12}), we immediately get%
\begin{equation*}
\frac{\mathcal{F}_{\rho }(U)}{\Vert U\Vert _{\mathcal{V}_{\rho ,0}}}\geq 
\frac{\frac{1}{p}\Vert \nabla u\Vert _{p,\Omega }^{p}+\frac{\rho }{q}\Vert
\nabla _{\Gamma }u\Vert _{q,\partial \Omega }^{q}}{{\Vert \nabla u\Vert
_{p,\Omega }+\rho \Vert \nabla _{\Gamma }u\Vert _{q,\partial \Omega }}}%
-C\Vert |F\Vert |_{2}.
\end{equation*}%
This inequality implies that 
\begin{equation*}
\lim_{\Vert U\Vert _{\mathcal{V}_{\rho ,0}}\rightarrow +\infty }\frac{%
\mathcal{F}_{\rho }(U)}{\Vert U\Vert _{\mathcal{V}_{\rho ,0}}}=+\infty ,
\end{equation*}%
and this shows that the functional $\mathcal{F}_{\rho }$ is coercive. Since $%
\mathcal{F}_{\rho }$ is also convex, lower-semicontinuous, it follows from 
\cite[Theorem 3.3.4]{Gui} that, there exists a function $U^{\ast }\in 
\mathcal{V}_{\rho ,0}$ which minimizes $\mathcal{F}_{\rho }$. More
precisely, for all $V\in \mathcal{V}_{\rho ,0}$, $\mathcal{F}_{\rho
}(U^{\ast })\leq \mathcal{F}_{\rho }(V);$ this implies that for every $%
0<t\leq 1$ and every $V\in \mathcal{V}_{\rho ,0}$,%
\begin{equation*}
\mathcal{F}_{\rho }(U^{\ast }+tV)-\mathcal{F}_{\rho }(U^{\ast })\geq 0.
\end{equation*}%
Hence, 
\begin{equation*}
\lim_{t\downarrow 0}\frac{\mathcal{F}_{\rho }(U^{\ast }+tV)-\mathcal{F}%
_{\rho }(U^{\ast })}{t}\geq 0.
\end{equation*}%
Using the Lebesgue Dominated Convergence, an easy computation shows that%
\begin{align}
0\leq \lim_{t\downarrow 0}\frac{\mathcal{F}_{\rho }(U^{\ast }+tV)-\mathcal{F}%
_{\rho }(U^{\ast })}{t}& =\int_{\Omega }|\nabla u^{\ast }|^{p-2}\nabla
u^{\ast }\cdot \nabla v\;dx  \label{ineq8} \\
& +\rho \int_{\partial \Omega }|\nabla _{\Gamma }u^{\ast }|^{q-2}\nabla
_{\Gamma }u^{\ast }\cdot \nabla _{\Gamma }v\;d\sigma -\int_{\overline{\Omega 
}}FV\;d\mu .  \notag
\end{align}%
Changing $V$ to $-V$ into (\ref{ineq8}) gives that%
\begin{equation}
\int_{\Omega }|\nabla u^{\ast }|^{p-2}\nabla u^{\ast }\cdot \nabla
v\;dx+\rho \int_{\partial \Omega }|\nabla _{\Gamma }u^{\ast }|^{q-2}\nabla
_{\Gamma }u^{\ast }\cdot \nabla _{\Gamma }v\;d\sigma =\int_{\overline{\Omega 
}}FV\;d\mu ,  \label{ineq9}
\end{equation}%
for every $V\in \mathcal{V}_{\rho ,0}$. Now, let $V\in \mathcal{V}_{\rho
}\cap \mathbb{X}_{2}$. Writing $V=V-C+C$ with $C=(c,c),$%
\begin{equation*}
c:=\frac{1}{\left( \lambda _{1}+\lambda _{2}\right) }\left( \int_{\Omega
}v\;dx+\int_{\partial \Omega }v\;\frac{d\sigma }{b}\right) ,
\end{equation*}%
and using the fact that $\displaystyle\int_{\overline{\Omega }}F\;d\mu =0$,
we obtain, for every $V\in \mathcal{V}_{\rho }\cap \mathbb{X}_{2}$, that%
\begin{equation*}
\int_{\Omega }|\nabla u^{\ast }|^{p-2}\nabla u^{\ast }\cdot \nabla
v\;dx+\rho \int_{\partial \Omega }|\nabla _{\Gamma }u^{\ast }|^{q-2}\nabla
_{\Gamma }u^{\ast }\cdot \nabla _{\Gamma }v\;d\sigma =\int_{\overline{\Omega 
}}FV\;d\mu .
\end{equation*}%
Therefore, $A_{\rho }(U)=F$. Hence, $F\in \mathcal{R}(A_{\rho })$ and this
completes the proof of the lemma.
\end{proof}

The following result is a direct consequence of Lemmas \ref{lem4.5bis}, \ref%
{lem4.5}. This is the main result of this section.

\begin{theorem}
\label{QFA} The operator $A_{\rho }=\partial \mathcal{J}_{\rho }$ satisfies
the following type of "quasi-linear" Fredholm alternative:%
\begin{equation*}
\mathcal{R}\left( A_{\rho }\right) =\mathcal{N}\left( A_{\rho }\right)
^{\perp }=\left\{ F\in \mathbb{X}_{2}:\left\langle F,\mathbf{1}\right\rangle
_{\mathbb{X}_{2}}=0\right\} .
\end{equation*}
\end{theorem}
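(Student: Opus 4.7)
The plan is to deduce Theorem \ref{QFA} directly from the two preceding lemmas by identifying the orthogonal complement of $\mathcal{N}(A_\rho)$ explicitly in the Hilbert space $\mathbb{X}_2 = L^2(\Omega,dx) \oplus L^2(\partial\Omega, d\sigma/b)$. Since Lemma \ref{lem4.5bis} has already established $\mathcal{N}(A_\rho) = \mathbb{R}\mathbf{1}$, and Lemma \ref{lem4.5} has already established $\mathcal{R}(A_\rho) = \{F \in \mathbb{X}_2 : \int_{\bar\Omega} F\,d\mu = 0\}$, the only remaining task is to verify that these two sets coincide, which amounts to a short computation with the inner product.

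Concretely, I would write out the inner product on $\mathbb{X}_2$: for $F = (f,g) \in \mathbb{X}_2$ and $C = (c,c) \in \mathcal{N}(A_\rho)$,
\begin{equation*}
\langle F, C\rangle_{\mathbb{X}_2} = c\int_\Omega f\,dx + c\int_{\partial\Omega} g\,\frac{d\sigma}{b(x)} = c\int_{\bar\Omega} F\,d\mu.
\end{equation*}
From this it is immediate that $F \in \mathcal{N}(A_\rho)^\perp$ (that is, $\langle F, C\rangle_{\mathbb{X}_2} = 0$ for every $c \in \mathbb{R}$) if and only if $\int_{\bar\Omega} F\,d\mu = 0$, which, by Lemma \ref{lem4.5}, is exactly the characterization of $\mathcal{R}(A_\rho)$. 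Taking $c = 1$ also identifies this quantity with $\langle F, \mathbf{1}\rangle_{\mathbb{X}_2}$, giving the second equality in the statement.

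There is no real obstacle here; all the nontrivial work (the variational existence argument for the minimizer producing elements of $\mathcal{R}(A_\rho)$, the use of the equivalent norm on $\mathcal{V}_{\rho,0}$, and the characterization of the kernel via connectedness of $\Omega$) has already been carried out in Lemmas \ref{lem4.5bis} and \ref{lem4.5}. The theorem is thus best viewed as a clean restatement packaging those two lemmas into a single "Fredholm-type" identity, and the proof I would give is correspondingly short — essentially two or three lines exhibiting the chain of equalities $\mathcal{R}(A_\rho) = \{F : \int_{\bar\Omega} F\,d\mu = 0\} = \{F : \langle F,\mathbf{1}\rangle_{\mathbb{X}_2} = 0\} = (\mathbb{R}\mathbf{1})^\perp = \mathcal{N}(A_\rho)^\perp$.
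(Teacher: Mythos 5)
Your proposal is correct and matches the paper's treatment: the paper itself presents Theorem \ref{QFA} as a direct consequence of Lemmas \ref{lem4.5bis} and \ref{lem4.5}, and your inner-product computation $\langle F,C\rangle_{\mathbb{X}_2}=c\int_{\overline{\Omega}}F\,d\mu$ for $C=(c,c)$ is exactly the (implicit) link identifying $\mathcal{N}(A_\rho)^\perp$ with the zero-mean functions. Nothing further is needed.
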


\section{Necessary and sufficient conditions for existence of solutions}

\label{main}

In this section, we prove the first main result (cf. Theorem \ref{main*})\
for problem (\ref{1.1}). Before we do so, we will need the following results
from maximal monotone operators theory and convex analysis.

\begin{definition}
\label{D1} Let $\mathcal{H}$ be a real Hilbert space. Two subsets $K_{1}$
and $K_{2}$ of $\mathcal{H}$ are said to be almost equal, written, $%
K_{1}\simeq K_{2},$ if $K_{1}$ and $K_{2}$ have the same closure and the
same interior, that is, $\overline{K_{1}}=\overline{K_{2}}$ and $\mbox{int}%
\left( K_{1}\right) =\mbox{int}\left( K_{2}\right) .$
\end{definition}

The following abstract result is taken from \cite[Theorem 3 and
Generalization in p.173--174]{BH}.

\begin{theorem}[\textbf{Brezis-Haraux}]
\label{T2} Let $A$ and $B$ be subdifferentials of proper convex lower
semicontinuous functionals $\varphi_1$ and $\varphi_2$, respectively, on a
real Hilbert space $\mathcal{H}$ with $D(\varphi_1)\cap D(\varphi_2)\ne
\emptyset$, and let $C$ be the subdifferential of the proper, convex lower
semicontinuous functional $\varphi_1+\varphi_2$, that is $%
C=\partial(\varphi_1+\varphi_2)$. Then 
\begin{equation*}
\mathcal{R}(A)+\mathcal{R}(B)\subset \overline{\mathcal{R}(C)}\;\;\;%
\mbox{
and } \;\;\; \mbox{Int}\left(\mathcal{R}(A)+\mathcal{R}(B)\right)\subset%
\mathcal{R}(C)
\end{equation*}
In particular, if the operator $A+B$ is maximal monotone, then 
\begin{equation*}
\mathcal{R}\left( A+B\right) \simeq \mathcal{R}\left( A\right) +\mathcal{R}%
\left( B\right),
\end{equation*}
and this is the case if $\partial(\varphi_1+\varphi_2)=\partial\varphi_1+
\partial\varphi_2$.
\end{theorem}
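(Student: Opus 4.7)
The plan is to reduce the range-of-subdifferential question to a domain-of-conjugate question via Fenchel--Moreau duality. The workhorse is the identity $(\partial \varphi)^{-1} = \partial \varphi^{\ast}$, valid for any proper, convex, lower semicontinuous functional $\varphi$ on a real Hilbert space $\mathcal{H}$. This immediately gives $\mathcal{R}(\partial\varphi) = D(\partial \varphi^{\ast})$, and since always $D(\partial \varphi^{\ast}) \subset D(\varphi^{\ast})$, we obtain the upper bound $\mathcal{R}(\partial\varphi) \subset D(\varphi^{\ast})$.

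The key lemma I would establish is the reverse-direction inclusion $\mbox{int}(D(\varphi^{\ast})) \subset \mathcal{R}(\partial\varphi)$. I would prove this by the standard continuity-from-boundedness principle for convex functions: a convex function that is bounded above on a neighborhood of a point is continuous, hence subdifferentiable, at that point. For any $y \in \mbox{int}(D(\varphi^{\ast}))$, convexity of $\varphi^{\ast}$ lets us bound $\varphi^{\ast}$ uniformly on a small ball around $y$ by taking a simplex of finitely many nearby points where $\varphi^{\ast}$ is finite, whence $\partial\varphi^{\ast}(y) \neq \emptyset$, i.e.\ $y \in D(\partial\varphi^{\ast}) = \mathcal{R}(\partial\varphi)$. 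Sandwiching then yields $\overline{\mathcal{R}(\partial\varphi)} = \overline{D(\varphi^{\ast})}$ and $\mbox{int}(\mathcal{R}(\partial\varphi)) = \mbox{int}(D(\varphi^{\ast}))$.

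To handle the sum, I would invoke the universally valid Fenchel inequality $(\varphi_{1}+\varphi_{2})^{\ast} \leq \varphi_{1}^{\ast} \square \varphi_{2}^{\ast}$, where $\square$ denotes infimal convolution, whose effective domain is $D(\varphi_{1}^{\ast})+D(\varphi_{2}^{\ast})$. Hence $D(\varphi_{1}^{\ast})+D(\varphi_{2}^{\ast}) \subset D((\varphi_{1}+\varphi_{2})^{\ast})$. Combining with the key lemma applied separately to $\varphi_{1}$, $\varphi_{2}$, and $\varphi_{1}+\varphi_{2}$, we get
\[
\mathcal{R}(A)+\mathcal{R}(B) \subset D(\varphi_{1}^{\ast})+D(\varphi_{2}^{\ast}) \subset D\bigl((\varphi_{1}+\varphi_{2})^{\ast}\bigr) \subset \overline{\mathcal{R}(C)},
\]
and passing to interiors,
\[
\mbox{int}\bigl(\mathcal{R}(A)+\mathcal{R}(B)\bigr) \subset \mbox{int}\bigl(D((\varphi_{1}+\varphi_{2})^{\ast})\bigr) = \mbox{int}(\mathcal{R}(C)) \subset \mathcal{R}(C).
\]

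For the ``in particular'' clause, the inclusion $A+B \subset C = \partial(\varphi_{1}+\varphi_{2})$ holds by the very definition of subdifferential. If $A+B$ is itself maximal monotone, then no proper monotone extension is possible, so $A+B = C$; hence $\mathcal{R}(A+B) = \mathcal{R}(C)$. Coupling this with the trivial $\mathcal{R}(A+B) \subset \mathcal{R}(A)+\mathcal{R}(B)$ and the two inclusions proved above delivers $\overline{\mathcal{R}(C)} = \overline{\mathcal{R}(A)+\mathcal{R}(B)}$ and $\mbox{int}(\mathcal{R}(C)) = \mbox{int}(\mathcal{R}(A)+\mathcal{R}(B))$, which is exactly $\mathcal{R}(A+B) \simeq \mathcal{R}(A)+\mathcal{R}(B)$. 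The main obstacle is the non-trivial inclusion $\mbox{int}(D(\varphi^{\ast})) \subset \mathcal{R}(\partial\varphi)$ via local boundedness of convex functions; the remaining steps are definition chasing and the soft inequality $(\varphi_{1}+\varphi_{2})^{\ast} \leq \varphi_{1}^{\ast} \square \varphi_{2}^{\ast}$, which requires no constraint qualification.
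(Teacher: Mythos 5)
You should first note that the paper itself does not prove this statement: it is quoted verbatim from Brezis--Haraux \cite{BH}, whose original argument runs through the resolvent of the maximal monotone operator $C$ and the ``star-monotonicity'' of subdifferentials, not through conjugate functions. Your duality skeleton --- $(\partial\varphi)^{-1}=\partial\varphi^{\ast}$, hence $\mathcal{R}(\partial\varphi)=D(\partial\varphi^{\ast})$, the inequality $(\varphi_{1}+\varphi_{2})^{\ast}\le\varphi_{1}^{\ast}\,\square\,\varphi_{2}^{\ast}$ giving $D(\varphi_{1}^{\ast})+D(\varphi_{2}^{\ast})\subset D((\varphi_{1}+\varphi_{2})^{\ast})$, and the deduction of the ``in particular'' clause from $A+B\subset C$ plus maximality --- is a legitimate alternative route, and those parts are fine.

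However, there are two genuine gaps in the way you justify the key lemma, and one of them is exactly where the first inclusion of the theorem lives. (i) Your proof that $\mbox{int}(D(\varphi^{\ast}))\subset D(\partial\varphi^{\ast})$ bounds $\varphi^{\ast}$ above near $y$ by taking ``a simplex of finitely many nearby points''; this is a finite-dimensional argument. In the infinite-dimensional Hilbert spaces relevant here (e.g.\ $\mathbb{X}_{2}=L^{2}(\Omega)\oplus L^{2}(\partial\Omega)$) the convex hull of finitely many points has empty interior and yields no bound on any ball. The statement is still true, but because $\varphi^{\ast}$ is automatically lower semicontinuous one must use the Baire-category argument that a proper, convex, lsc function is locally bounded, hence continuous and subdifferentiable, on the interior of its domain. (ii) More seriously, the inclusion you actually use for the first display, namely $D((\varphi_{1}+\varphi_{2})^{\ast})\subset\overline{\mathcal{R}(C)}=\overline{D(\partial(\varphi_{1}+\varphi_{2})^{\ast})}$, is the density of the domain of a subdifferential in the domain of the function. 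Your ``sandwiching'' only gives $\overline{\mbox{int}\,D(g)}\subset\overline{D(\partial g)}\subset\overline{D(g)}$, which proves nothing when $\mbox{int}\,D((\varphi_{1}+\varphi_{2})^{\ast})=\emptyset$ --- a case that genuinely occurs (the conjugate's domain may lie in a proper closed affine subspace; compare the hyperplane-type ranges such as $\mathbf{1}^{\perp}$ appearing in this very paper). To close this you must invoke the Br\o ndsted--Rockafellar theorem ($D(\partial g)$ is dense in $D(g)$ for every proper, convex, lsc $g$), or fall back on the resolvent estimate of Brezis--Haraux; with that ingredient supplied, your chain of inclusions and the passage to interiors and closures do deliver the theorem.
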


\subsection{Assumptions and intermediate results}

Let us recall that the aim of this section is to establish some necessary
and sufficient conditions for the solvability of the following nonlinear
elliptic problem:%
\begin{equation}
\begin{cases}
-\Delta _{p}u+\alpha _{1}\left( u\right) =f, & \text{ in }\Omega , \\ 
&  \\ 
b\left( x\right) \left\vert \nabla u\right\vert ^{p-2}\partial _{n}u-\rho
b\left( x\right) \Delta _{q,\Gamma }u+\alpha _{2}\left( u\right) =g, & \text{
on }\partial \Omega ,%
\end{cases}
\label{3.1}
\end{equation}%
where $p,q\in (1,+\infty )$ are fixed.
We also assume that $\alpha _{j}:\mathbb{R}\rightarrow \mathbb{R}$ ($j=1,2$)
satisfy the following assumptions.

\begin{assumption}
\label{assump-1} The functions $\alpha _{j}:\;\mathbb{R}\rightarrow \mathbb{R%
}$ $(j=1,2)$ are odd, monotone nondecreasing, continuous and satisfy $\alpha
_{j}(0)=0$.
\end{assumption}

Let $\tilde{\alpha}_{j}$ be the inverse of $\alpha _{j}$. We define the
functions $\Lambda _{j},\;\widetilde{\Lambda }_{j}:\;\mathbb{R}\rightarrow 
\mathbb{R}_{+}$ ($j=1,2$) by%
\begin{equation}
\Lambda _{j}(t):=\int_{0}^{|t|}\alpha _{j}(s)ds\;\mbox{ and }\;\widetilde{%
\Lambda }_{j}(t):=\int_{0}^{|t|}\tilde{\alpha}_{j}(s)ds.  \label{Lambda-j}
\end{equation}%
Then it is clear that $\Lambda _{j}$, $\widetilde{\Lambda }_{j}$ are even,
convex and monotone increasing on $\mathbb{R}_{+},$\ with $\Lambda _{j}(0)=%
\widetilde{\Lambda }_{j}(0),$ for each $j=1,2$. Moreover, since $\alpha _{j}$
are odd, we have $\Lambda _{j}^{^{\prime }}\left( t\right) =\alpha
_{j}\left( t\right) ,$ for all $t\in \mathbb{R}$ and $j=1,2$, with a similar
relation holding for $\widetilde{\Lambda }_{j}$ as well. The following
result whose proof is included in \cite[Chap. I, Section 1.3, Theorem 3]%
{RR-1} holds.

\begin{lemma}
\label{lemma-4-4} The functions $\Lambda _{j}$ and $\widetilde{\Lambda }_{j}$
$(j=1,2)$ satisfy \eqref{st} and \eqref{tt}. More precisely, for all $s,t\in 
\mathbb{R}$,%
\begin{equation*}
st\leq \Lambda _{j}(s)+\widetilde{\Lambda }_{j}(t).
\end{equation*}%
If $s=\alpha _{j}(t)$ or $t=\tilde{\alpha}_{j}(s),$ then we also have
equality, that is,%
\begin{equation*}
\widetilde{\Lambda }_{j}(\alpha _{j}(s))=s\alpha _{j}(s)-\Lambda _{j}(s),%
\text{ }j=1,2.
\end{equation*}
\end{lemma}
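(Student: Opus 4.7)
The statement is a direct adaptation of the classical Young's inequality for complementary $\mathcal{N}$-functions (Remark~\ref{rem-N-func}, equations \eqref{st}-\eqref{tt}) to the present pair $(\Lambda_j, \widetilde{\Lambda}_j)$. My plan is to reduce it to the known form and then record why the argument applies verbatim here.

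First I would observe that $\alpha_j$ and $\tilde{\alpha}_j$ are a pair of mutually inverse, continuous, nondecreasing odd functions vanishing at zero (where by ``inverse'' I mean the generalized inverse $\tilde{\alpha}_j(s) := \inf\{t \geq 0 : \alpha_j(t) > s\}$ on $s \geq 0$, extended oddly; this handles any flat pieces of $\alpha_j$). Because of oddness, both $\Lambda_j$ and $\widetilde{\Lambda}_j$ are even, so $\Lambda_j(s) + \widetilde{\Lambda}_j(t) = \Lambda_j(|s|) + \widetilde{\Lambda}_j(|t|) \geq |s||t| \geq st$, and it suffices to establish the inequality for $s, t \geq 0$.

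For $s, t \geq 0$ I would use the standard geometric/Fenchel argument. Fix $s \geq 0$ and consider $\phi(t) := \Lambda_j(s) + \widetilde{\Lambda}_j(t) - st$. Since $\widetilde{\Lambda}_j$ is convex on $\mathbb{R}_+$ with (left) derivative $\tilde{\alpha}_j$, the function $\phi$ is convex with $s$ lying in the subdifferential of $\widetilde{\Lambda}_j$ at the point $t_0 := \alpha_j(s)$ (because $\tilde{\alpha}_j(\alpha_j(s)) \ni s$ as a subgradient inclusion). Hence $\phi$ attains its minimum at $t_0$, so it is enough to show $\phi(\alpha_j(s)) = 0$, i.e.\ the equality
\begin{equation*}
\widetilde{\Lambda}_j(\alpha_j(s)) = s\, \alpha_j(s) - \Lambda_j(s).
\end{equation*}
This I would read off geometrically: $\Lambda_j(s) = \int_0^s \alpha_j(\tau)\,d\tau$ is the area under the graph of $\alpha_j$ on $[0,s]$, while $\widetilde{\Lambda}_j(\alpha_j(s)) = \int_0^{\alpha_j(s)} \tilde{\alpha}_j(\tau)\,d\tau$ is the area to the left of the same graph on $[0,\alpha_j(s)]$; together they tile the rectangle $[0,s] \times [0,\alpha_j(s)]$ of area $s\,\alpha_j(s)$. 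Symmetrically, at $t = \tilde{\alpha}_j(s)$ one recovers the dual equality. Together with convexity this gives both $st \leq \Lambda_j(s) + \widetilde{\Lambda}_j(t)$ and the equality statements.

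The only mildly delicate point, which I would address in a short remark, is the possibility that $\alpha_j$ has flat pieces so that $\tilde{\alpha}_j$ is only a generalized inverse. In that case $\alpha_j(\tilde{\alpha}_j(s)) = s$ and $\tilde{\alpha}_j(\alpha_j(s)) = s$ may fail on an at most countable set, but $\tilde{\alpha}_j$ is monotone and therefore has only countably many discontinuities; the integrals defining $\Lambda_j, \widetilde{\Lambda}_j$ are unaffected and the Fenchel duality argument above is unchanged. This is the main technicality, but it is precisely the setting treated in the reference \cite[Chap.~I, Sec.~1.3, Thm.~3]{RR-1}, which the statement already invokes.
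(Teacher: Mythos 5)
Your proposal is correct and matches the paper's treatment: the paper gives no argument of its own but simply cites the Young-inequality theorem in Rao--Ren \cite[Chap.~I, Section 1.3, Theorem 3]{RR-1} and remarks that its conclusion persists under the weaker hypotheses of the lemma, and your Fenchel/area argument with the generalized inverse is exactly the standard proof behind that citation, carried out so as to cover the non-strictly-increasing case. (Only your closing remark is slightly off: $\tilde{\alpha}_j(\alpha_j(s))=s$ can fail on whole flat intervals of $\alpha_j$, not merely a countable set --- it is the discontinuities of $\tilde{\alpha}_j$ that are countable --- but your main argument never uses that identity, so nothing is affected.)
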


We note that in \cite{RR-1}, the statement of Lemma \ref{lemma-4-4} assumed
that $\Lambda _{j}$, $\widetilde{\Lambda }_{j}$ are $\mathcal{N}$-functions
in the sense of Definition \ref{N-func}. However, the conclusion of that
result holds under the weaker hypotheses of Lemma \ref{lemma-4-4}. 

Define the functional $\mathcal{J}_{2}:\;\mathbb{X}_{2}\rightarrow \lbrack
0,+\infty ]$ by 
\begin{equation*}
\mathcal{J}_{2}(u,v):=%
\begin{cases}
\displaystyle\int_{\Omega }\Lambda _{1}(u)\;dx+\int_{\partial \Omega
}\Lambda _{2}(v)\;\frac{d\sigma }{b},\;\;\; & \mbox{ if }\;(u,v)\in D(%
\mathcal{J}_{2}), \\ 
+\infty , & \mbox{ if }(u,v)\in \mathbb{X}_{2}\setminus D(\mathcal{J}_{2}),%
\end{cases}%
\end{equation*}%
with the effective domain%
\begin{equation*}
D(\mathcal{J}_{2}):=\left\{ (u,v)\in \mathbb{X}_{2}:\;\int_{\Omega }\Lambda
_{1}(u)\;dx+\int_{\partial \Omega }\Lambda _{2}(v)\;\frac{d\sigma }{b}%
<\infty \right\} .
\end{equation*}

\begin{lemma}
Let $\alpha _{j}$ $(j=1,2)$ satisfy Assumption \ref{assump-1}. Then the
functional $\mathcal{J}_{2}$ is proper, convex and lower semicontinuous on $%
\mathbb{X}_{2}$.
\end{lemma}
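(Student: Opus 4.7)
The plan is to verify the three properties (proper, convex, lower semicontinuous) in turn, all of which follow rather directly from elementary properties of $\Lambda_1$ and $\Lambda_2$.

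First, observe that since each $\alpha_j$ is continuous, nondecreasing and odd with $\alpha_j(0)=0$, the integrand in $\Lambda_j(t)=\int_0^{|t|}\alpha_j(s)\,ds$ is nonnegative on $[0,|t|]$. Thus $\Lambda_j\geq 0$ on $\mathbb{R}$ and $\Lambda_j(0)=0$, which shows $\mathcal{J}_2\geq 0$ and that the constant pair $(0,0)$ belongs to $D(\mathcal{J}_2)$ with $\mathcal{J}_2(0,0)=0$. Hence $\mathcal{J}_2$ is proper. For convexity, I would recall that each $\Lambda_j$ is an antiderivative of the nondecreasing function $\alpha_j$ (extended evenly), so $\Lambda_j$ is convex on $\mathbb{R}$ (equivalently, it was already noted right after \eqref{Lambda-j} that $\Lambda_j$ is convex). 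Then for $\lambda\in[0,1]$ and $(u_i,v_i)\in\mathbb{X}_2$, the pointwise inequality
\begin{equation*}
\Lambda_j\bigl(\lambda u_1+(1-\lambda)u_2\bigr)(x)\leq \lambda\Lambda_j(u_1(x))+(1-\lambda)\Lambda_j(u_2(x))
\end{equation*}
integrates, over $\Omega$ for $j=1$ and over $\partial\Omega$ against $d\sigma/b$ for $j=2$, to yield $\mathcal{J}_2(\lambda U_1+(1-\lambda)U_2)\leq \lambda\mathcal{J}_2(U_1)+(1-\lambda)\mathcal{J}_2(U_2)$, which in particular shows that $D(\mathcal{J}_2)$ is convex.

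For lower semicontinuity, the plan is the standard Fatou argument. Suppose $(u_n,v_n)\to(u,v)$ in $\mathbb{X}_2=L^2(\Omega,dx)\oplus L^2(\partial\Omega,d\sigma/b)$. We may assume $\liminf_n \mathcal{J}_2(u_n,v_n)<+\infty$, for otherwise the inequality is trivial; then extract a subsequence (not relabeled) realising this $\liminf$. From $L^2$ convergence in each component, extract a further subsequence so that $u_n\to u$ a.e.\ on $\Omega$ (with respect to $dx$) and $v_n\to v$ a.e.\ on $\partial\Omega$ (with respect to $\sigma$, equivalently $d\sigma/b$ since $b\geq b_0>0$). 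Since $\Lambda_j$ is convex and finite-valued on $\mathbb{R}$, it is continuous, so $\Lambda_1(u_n)\to\Lambda_1(u)$ a.e.\ on $\Omega$ and $\Lambda_2(v_n)\to\Lambda_2(v)$ a.e.\ on $\partial\Omega$. Applying Fatou's lemma to the nonnegative sequences yields
\begin{equation*}
\int_\Omega \Lambda_1(u)\,dx\leq \liminf_{n\to\infty}\int_\Omega \Lambda_1(u_n)\,dx,\qquad \int_{\partial\Omega}\Lambda_2(v)\,\frac{d\sigma}{b}\leq \liminf_{n\to\infty}\int_{\partial\Omega}\Lambda_2(v_n)\,\frac{d\sigma}{b},
\end{equation*}
and summing these two inequalities gives $\mathcal{J}_2(u,v)\leq \liminf_{n\to\infty}\mathcal{J}_2(u_n,v_n)$.

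There is no real obstacle here: the only point worth a comment is that, strictly speaking, the extraction of an a.e.-convergent subsequence is a property of the original converging sequence, but the standard two-step extraction (first a subsequence attaining $\liminf$, then an a.e.\ sub-subsequence) justifies the passage. No $\triangle_2$-type hypothesis is needed for this lemma; the growth conditions on $\Lambda_j$ assumed in Theorem \ref{main*} will only be used later to ensure density, compactness, and the identification of the subdifferential.
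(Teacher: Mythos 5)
Your proposal is correct and follows essentially the same route as the paper: properness and convexity are the routine pointwise facts about $\Lambda_j$, and lower semicontinuity is obtained by passing to an a.e.-convergent subsequence, using continuity of $\Lambda_j$, and applying Fatou's lemma. Your extra care with the two-step subsequence extraction (first realizing the $\liminf$, then a.e.\ convergence) is a minor refinement of the same argument, not a different approach.
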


\begin{proof}
It is routine to check that $\mathcal{J}_{2}$ is convex and proper. This
follows easily from the convexity of $\Lambda _{j}$ and the fact that $%
\Lambda _{j}(0)=0$. To show the lower semicontinuity on $\mathbb{X}_{2}$,
let $U_{n}=(u_{n},v_{n})\in D(\mathcal{J}_{2})$ be such that $%
U_{n}\rightarrow U:=(u,v)$ in $\mathbb{X}_{2}$ and $\mathcal{J}%
_{2}(U_{n})\leq C$ for some constant $C>0$. Since $U_{n}\rightarrow U$ in $%
\mathbb{X}_{2}$, then there is a subsequence, which we also denote by $%
U_{n}=(u_{n},v_{n}),$ such that $u_{n}\rightarrow u$ a.e. on $\Omega $ and $%
v_{n}\rightarrow v$ $\sigma $-a.e. on $\Gamma $. Since $\Lambda _{j}(\cdot )$
are continuous (thus, lower-semicontinuous), we have%
\begin{equation*}
\Lambda _{1}(u)\leq \liminf_{n\rightarrow \infty }\Lambda _{1}(u_{n})\;%
\mbox{ and }\;\Lambda _{2}(v)\leq \liminf_{n\rightarrow \infty }\Lambda
_{2}(v_{n}).
\end{equation*}%
By Fatou's Lemma, we obtain 
\begin{equation*}
\int_{\Omega }\Lambda _{1}(u)dx\leq \int_{\Omega }\liminf_{n\rightarrow
\infty }\Lambda _{1}(u_{n})dx\leq \liminf_{n\rightarrow \infty }\int_{\Omega
}\Lambda _{1}(u_{n})dx
\end{equation*}%
and 
\begin{equation*}
\int_{\partial \Omega }\Lambda _{2}(v)\frac{d\sigma }{b}\leq \int_{\partial
\Omega }\liminf_{n\rightarrow \infty }\Lambda _{2}(v_{n})\frac{d\sigma }{b}%
\leq \liminf_{n\rightarrow \infty }\int_{\partial \Omega }\Lambda _{2}(v_{n})%
\frac{d\sigma }{b}.
\end{equation*}%
Hence, $\mathcal{J}_{2}$ is lower semicontinuous on $\mathbb{X}_{2}$.
\end{proof}

We have the following result whose proof is contained in \cite[Chap. III,
Section 3.1, Theorem 2]{RR-1}.

\begin{lemma}
\label{vector-space} Let $\alpha _{j}$ $(j=1,2)$ satisfy Assumption \ref%
{assump-1} and assume that there exist constants $C_{j}>1$ $(j=1,2)$ such
that 
\begin{equation}
\Lambda _{j}(2t)\leq C_{j}\Lambda _{j}(t),\;\mbox{ for all }\;t\in \mathbb{R}%
.  \label{mod-delta}
\end{equation}%
Then $D(\mathcal{J}_{2})$ is a vector space.
\end{lemma}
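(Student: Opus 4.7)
The plan is to verify the two vector-space axioms directly from the pointwise $\Delta_2$-growth bound on $\Lambda_j$. Since $D(\mathcal{J}_2)$ trivially contains $(0,0)$, it suffices to show (i) closure under scalar multiplication and (ii) closure under addition, both reducing to pointwise estimates on $\Lambda_j$ which can then be integrated against $dx$ and $d\sigma/b$ respectively.

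First I would establish the auxiliary pointwise estimate that for every $\lambda\in\mathbb{R}$ there is a constant $K_j(\lambda)>0$ with
\begin{equation*}
\Lambda_j(\lambda t)\le K_j(\lambda)\,\Lambda_j(t),\qquad\forall t\in\mathbb{R},\;j=1,2.
\end{equation*}
For $|\lambda|\le 1$ this is immediate from the fact that $\Lambda_j$ is even and nondecreasing on $\mathbb{R}_+$ (take $K_j(\lambda)=1$). For $|\lambda|>1$, pick $n=n(\lambda)\in\mathbb{N}$ with $2^n\ge|\lambda|$ and iterate the hypothesis \eqref{mod-delta} $n$ times to get $\Lambda_j(\lambda t)\le \Lambda_j(2^n t)\le C_j^{\,n}\Lambda_j(t)$, so $K_j(\lambda):=C_j^{\,n(\lambda)}$ works.

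Next I would prove closure under addition using convexity together with \eqref{mod-delta}. For any real $u,v$,
\begin{equation*}
\Lambda_j(u+v)=\Lambda_j\!\left(\tfrac{2u+2v}{2}\right)\le\tfrac12\Lambda_j(2u)+\tfrac12\Lambda_j(2v)\le\tfrac{C_j}{2}\bigl(\Lambda_j(u)+\Lambda_j(v)\bigr).
\end{equation*}
Thus if $(u_1,v_1),(u_2,v_2)\in D(\mathcal{J}_2)$ and $a,b\in\mathbb{R}$, combining the two estimates gives, pointwise,
\begin{equation*}
\Lambda_1(au_1+bu_2)\le\tfrac{C_1}{2}\bigl(K_1(a)\Lambda_1(u_1)+K_1(b)\Lambda_1(u_2)\bigr),
\end{equation*}
and analogously for $\Lambda_2$ with the boundary traces. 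Integrating against $dx$ over $\Omega$ and against $d\sigma/b$ over $\partial\Omega$ and using that both $(u_j,v_j)$ lie in $D(\mathcal{J}_2)$, the right-hand sides are finite; hence $(au_1+bu_2,av_1+bv_2)\in D(\mathcal{J}_2)$, which is the vector-space property.

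I do not anticipate a genuine obstacle here: the only delicate point is keeping track of the constants when $|\lambda|$ is large, which is handled by the finite iteration of \eqref{mod-delta}. Note that the argument uses only the $\Delta_2$-type condition \eqref{mod-delta} together with evenness, monotonicity and convexity of $\Lambda_j$ from Assumption \ref{assump-1} — the full $\mathcal{N}$-function structure is not required, in agreement with the remark following Lemma \ref{lemma-4-4}.
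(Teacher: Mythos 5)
Your argument is correct. Note that the paper gives no proof of Lemma \ref{vector-space} at all: it only cites \cite[Chap. III, Section 3.1, Theorem 2]{RR-1}, and the argument behind that citation is exactly the one you carry out, namely convexity of $\Lambda_j$ together with a finite iteration of the $(\triangle_2)$-bound \eqref{mod-delta} to absorb scalars, followed by integration of the resulting pointwise estimate over $\Omega$ and $\partial\Omega$. Your self-contained version has the additional merit of confirming, in the spirit of the paper's remark after Lemma \ref{lemma-4-4}, that only evenness, monotonicity and convexity of $\Lambda_j$ are used and not the full ${\mathcal{N}}$-function structure assumed in the reference; the one point you leave implicit, which is immediate, is that $au_1+bu_2$ stays in $\mathbb{X}_2$, since $\mathbb{X}_2$ is itself a vector space.
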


Let the operator $B_{2}$ be defined by 
\begin{equation}
\begin{cases}
D\left( B_{2}\right) =\left\{ U:=\left( u,v\right) \in \mathbb{X}_{2}:\left(
\alpha _{1}\left( u\right) ,\alpha _{2}\left( v\right) \right) \in \mathbb{X}%
_{2}\right\} , \\ 
B_{2}(U)=\left( \alpha _{1}\left( u\right) ,\alpha _{2}\left( v\right)
\right) .%
\end{cases}
\label{3.3}
\end{equation}

We have the following result.

\begin{lemma}
\label{sub-1} Let the assumptions of Lemma \ref{vector-space} be satisfied.
Then the subdifferential of $\mathcal{J}_{2}$ and the operator $B_{2}$
coincide, that is, for all $(u,v)\in D(B_{2})=D(\partial \mathcal{J}_{2})$,%
\begin{equation*}
\partial \mathcal{J}_{2}(u,v)=B_{2}(u,v).
\end{equation*}
\end{lemma}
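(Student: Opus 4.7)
The strategy is to establish $B_2\subseteq\partial\mathcal{J}_2$ directly from the convexity of each $\Lambda_j$, and then to show independently that $B_2$ is maximal monotone; since $\partial\mathcal{J}_2$ is a monotone extension of $B_2$ and $B_2$ admits no proper monotone extension, equality of the two operators must follow. The identification $D(B_2)=D(\partial\mathcal{J}_2)$ drops out of this.

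For the inclusion $B_2\subseteq\partial\mathcal{J}_2$, let $U=(u,v)\in D(B_2)$. I first verify $U\in D(\mathcal{J}_2)$. Since $\Lambda_j$ is convex with $\Lambda_j(0)=0$ and left-derivative $\alpha_j$, monotonicity of $\alpha_j$ together with the oddness of $\alpha_j$ gives the pointwise bound $\Lambda_j(t)\leq t\,\alpha_j(t)$ for every $t\in\mathbb{R}$ (both sides being nonnegative). Integrating and applying the Cauchy-Schwarz inequality in $L^2(\Omega,dx)$ and $L^2(\partial\Omega,d\sigma/b)$ respectively shows $\mathcal{J}_2(U)<\infty$. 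Next, for any $\tilde U=(\tilde u,\tilde v)\in D(\mathcal{J}_2)$, convexity of $\Lambda_j$ yields the pointwise inequality $\Lambda_j(\tilde t)-\Lambda_j(t)\geq\alpha_j(t)(\tilde t-t)$; integrating (using that the right-hand side, by Cauchy-Schwarz again, is absolutely integrable) produces
\begin{equation*}
\mathcal{J}_2(\tilde U)-\mathcal{J}_2(U)\geq\int_\Omega\alpha_1(u)(\tilde u-u)\,dx+\int_{\partial\Omega}\alpha_2(v)(\tilde v-v)\,\frac{d\sigma}{b}=\langle B_2(U),\tilde U-U\rangle_{\mathbb{X}_2},
\end{equation*}
which is exactly $B_2(U)\in\partial\mathcal{J}_2(U)$.

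For the maximality of $B_2$, I invoke Minty's criterion and show $\mathcal{R}(I+B_2)=\mathbb{X}_2$. Given $(F,G)\in\mathbb{X}_2$, the map $t\mapsto t+\alpha_j(t)$ is a continuous, strictly increasing bijection of $\mathbb{R}$ onto itself: continuity and strict monotonicity are immediate, while surjectivity follows because $\alpha_j(t)$ has the same sign as $t$, so $t+\alpha_j(t)$ dominates $t$ in modulus and tends to $\pm\infty$ as $t\to\pm\infty$. Its inverse $\Phi_j:=(I+\alpha_j)^{-1}$ is $1$-Lipschitz and satisfies $\Phi_j(0)=0$. Setting $u:=\Phi_1\circ F$ and $v:=\Phi_2\circ G$ pointwise yields measurable functions (by continuity of $\Phi_j$) with $|u|\leq|F|$ and $|v|\leq|G|$, hence $u\in L^2(\Omega)$ and $v\in L^2(\partial\Omega,d\sigma/b)$; moreover $\alpha_1(u)=F-u$ and $\alpha_2(v)=G-v$ lie in the same spaces. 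Thus $(u,v)\in D(B_2)$ and $(I+B_2)(u,v)=(F,G)$. Monotonicity of $B_2$ is inherited pointwise from monotonicity of $\alpha_1,\alpha_2$, so Minty's theorem yields that $B_2$ is maximal monotone.

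Combining the two steps: $B_2\subseteq\partial\mathcal{J}_2$ and $B_2$ is maximal monotone while $\partial\mathcal{J}_2$ is monotone, so $B_2=\partial\mathcal{J}_2$ as operators (with matching domains). I do not expect any serious obstacle in this argument: the proof is a textbook Nemytskii/subdifferential identification, and the $(\triangle_2)$-hypothesis inherited from Lemma \ref{vector-space} is not actually needed for the identification itself — it is used only to guarantee that $D(\mathcal{J}_2)$ is a vector space, a fact which is exploited elsewhere in the paper.
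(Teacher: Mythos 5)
Your argument is correct, but it follows a genuinely different route from the paper. The paper proves both inclusions directly: for $\partial \mathcal{J}_{2}\subseteq B_{2}$ it takes $V=U+tW$ with $W\in D(\mathcal{J}_{2})$, which is legitimate precisely because Lemma \ref{vector-space} (and hence the $(\triangle_{2})$-condition \eqref{mod-delta}) makes $D(\mathcal{J}_{2})$ a vector space, then differentiates at $t\downarrow 0$ and identifies $f=\alpha_{1}(u)$, $g=\alpha_{2}(v)$ by testing against $\mathcal{D}(\Omega)$-functions; for the converse inclusion it uses the Young-type identity of Lemma \ref{lemma-4-4}, which is in substance the same convexity estimate you use for $B_{2}\subseteq\partial\mathcal{J}_{2}$. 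You instead prove only the inclusion $B_{2}\subseteq\partial\mathcal{J}_{2}$ (your bound $\Lambda_{j}(t)\le t\alpha_{j}(t)$ plus Cauchy--Schwarz to get $U\in D(\mathcal{J}_{2})$, then the subgradient inequality) and replace the harder inclusion by a maximality argument: the pointwise resolvent $(I+\alpha_{j})^{-1}$ is a $1$-Lipschitz bijection fixing $0$, so $\mathcal{R}(I+B_{2})=\mathbb{X}_{2}$, i.e.\ $B_{2}$ is maximal monotone in exactly the sense of the paper's definition, and a maximal monotone operator admits no proper monotone extension, whence $B_{2}=\partial\mathcal{J}_{2}$ with equal domains. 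This is the classical Br\'ezis-style identification of the subdifferential of an integral functional, and it has the genuine advantage you point out: the $(\triangle_{2})$-hypothesis plays no role in your proof, so the identification $\partial\mathcal{J}_{2}=B_{2}$ holds under Assumption \ref{assump-1} alone, whereas the paper's proof consumes \eqref{mod-delta} through Lemma \ref{vector-space}. What the paper's computation buys in exchange is an explicit, self-contained derivation (already set up for reuse in Lemma \ref{sub-sum}, where the same differentiation-plus-test-function scheme is applied to $\mathcal{J}_{3,\rho}$), while your route leans on the standard maximal-monotonicity machinery but is shorter and slightly more general.
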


\begin{proof}
Let $U=(u,v)\in D(\mathcal{J}_{2})$ and $F=(f,g)\in \partial \mathcal{J}%
_{2}(u,v)$. Then by definition, $F\in \mathbb{X}_{2}$ and, for every $%
V=(u_{1},v_{1})\in D(\mathcal{J}_{2}),$ we get%
\begin{equation*}
\int_{\overline{\Omega }}F(V-U)\;d\mu \leq \mathcal{J}_{2}(V)-\mathcal{J}%
_{2}(U).
\end{equation*}%
Let $V=U+tW,$ with $W=(u_{2},v_{2})\in D(\mathcal{J}_{2})$ and $0<t\leq 1$.
Then by Lemma \ref{vector-space}, $V=U+tW\in D(\mathcal{J}_{2})$. Now,
dividing by $t$ and taking the limit as $t\downarrow 0$, we obtain%
\begin{equation}
\int_{\Omega }FW\;d\mu \leq \int_{\Omega }\alpha
_{1}(u)u_{2}dx+\int_{\partial \Omega }\alpha _{2}(v)v_{2}\,\frac{d\sigma }{b}%
.  \label{first}
\end{equation}%
Changing $W$ to $-W$ in (\ref{first}) gives that%
\begin{equation*}
\int_{\overline{\Omega }}FW\;d\mu =\int_{\Omega }\alpha
_{1}(u)u_{2}dx+\int_{\partial \Omega }\alpha _{2}(v)v_{2}\,\frac{d\sigma }{b}%
.
\end{equation*}%
In particular, if $W=(u_{2},0)$ with $u_{2}\in \mathcal{D}(\Omega )$, we have%
\begin{equation*}
\int_{\Omega }fu_{2}\;dx=\int_{\Omega }\alpha _{1}(u)u_{2}\;dx,
\end{equation*}%
and this shows that $\alpha _{1}(u)=f$. Similarly, one obtains that $\alpha
_{2}(v)=g$. We have shown that $U\in D(B_{2})$ and%
\begin{equation*}
B_{2}(U):=B_{2}(u,v)=(\alpha _{1}(u),\alpha _{2}(v))=(f,g).
\end{equation*}%
Conversely, let $U=(u,v)\in D(B_{2})$ and set $F=(f,g):=B_{2}(u,v)=(\alpha
_{1}(u),\alpha _{2}(v))$. Since $(\alpha _{1}(u),\alpha _{2}(v))\in \mathbb{X%
}_{2}$, from (\ref{Lambda-j}) and (\ref{mod-delta}), it follows that%
\begin{equation*}
\int_{\Omega }\Lambda _{1}(u)dx+\int_{\partial \Omega }\Lambda _{2}(v)\frac{%
d\sigma }{b}<\infty .
\end{equation*}%
Hence, $U=(u,v)\in D(\mathcal{J}_{2})$. Let $V=(u_{1},v_{1})\in D(\mathcal{J}%
_{2})$. Using Lemma \ref{lemma-4-4}, we obtain%
\begin{align}
\alpha _{1}(u)(u_{1}-u)& =\alpha _{1}(u)u_{1}-\alpha _{1}(u)u \\
& \leq \Lambda _{1}(u_{1})+\Lambda _{1}(\alpha _{1}(u))-\alpha _{1}(u)u 
\notag \\
& =\Lambda _{1}(u_{1})-\Lambda _{1}(u)  \notag
\end{align}%
and similarly,%
\begin{equation*}
\alpha _{2}(v)(v_{1}-v)\leq \Lambda _{2}(v_{1})-\Lambda _{2}(v).
\end{equation*}%
Therefore,%
\begin{align*}
\int_{\overline{\Omega }}F(V-U)\;d\mu & =\int_{\Omega }\alpha
_{1}(u)(u_{1}-u)dx+\int_{\partial \Omega }\alpha _{2}(v)(v_{1}-v)\frac{%
d\sigma }{b} \\
& \leq \mathcal{J}_{2}(V)-\mathcal{J}_{2}(U).
\end{align*}%
By definition, this shows that $F=(\alpha _{1}(u),\alpha
_{2}(v))=B_{2}(U)\in \partial \mathcal{J}_{2}(U)$. We have shown that $U\in
D(\partial \mathcal{J}_{2})$ and $B_{2}(U)\in \partial \mathcal{J}_{2}(U)$.
This completes the proof of the lemma.
\end{proof}

Next, we define the functional $\mathcal{J}_{3,\rho }:\;\mathbb{X}%
_{2}\rightarrow \lbrack 0,+\infty ]$ by%
\begin{equation}
\mathcal{J}_{3,\rho }(U)=%
\begin{cases}
\mathcal{J}_{\rho }(U)+\mathcal{J}_{2}(U)\;\; & \mbox{ if }\;U\in D(\mathcal{%
J}_{3,\rho }):=D(\mathcal{J}_{\rho })\cap D(\mathcal{J}_{2}), \\ 
+\infty & \mbox{ if }\;U\in \mathbb{X}_{2}\backslash D(\mathcal{J}_{3,\rho
}).%
\end{cases}%
\end{equation}%
Note that for $\rho =0$,%
\begin{equation}
D(\mathcal{J}_{3,0})=\{U=(u,u|_{\partial \Omega })\in D(\mathcal{J}%
_{2}):u\in W^{1,p}(\Omega )\cap L^2(\Omega),\;u|_{\partial\Omega}\in
L^2(\partial\Omega)\},  \label{dom1}
\end{equation}%
while for $\rho =1$, 
\begin{equation}
D(\mathcal{J}_{3,1})=\{U=(u,u|_{\partial \Omega })\in D(\mathcal{J}%
_{2}):u\in W^{1,p}(\Omega )\cap L^2(\Omega),\;u|_{\partial \Omega }\in
W^{1,q}(\partial \Omega )\cap L^2(\partial\Omega)\}.  \label{dom2}
\end{equation}%
We have the following result.

\begin{lemma}
\label{sub-sum} Let the assumptions of Lemma \ref{vector-space} be
satisfied. Then the subdifferential of the functional $\mathcal{J}_{3,\rho }$
is given by%
\begin{align*}
D(\partial \mathcal{J}_{3,\rho })& =\left\{ U=(u,u_{\mid \partial \Omega
})\in D(\mathcal{J}_{3,\rho }):-\Delta _{p}u+\alpha _{1}(u)\in L^{2}(\Omega
)\right. \\
& \left. \text{and }b(x)|\nabla u|^{p-2}\partial _{n}u-b(x)\rho \Delta
_{q,\Gamma }u+\alpha _{2}(u)\in L^{2}(\partial \Omega ,{d\sigma }/{b}%
)\right\}
\end{align*}%
and 
\begin{equation}
\partial \mathcal{J}_{3,\rho }(U)=\bigg(-\Delta _{p}u+\alpha
_{1}(u),b(x)|\nabla u|^{p-2}\partial _{n}u-b(x)\rho \Delta _{q,\Gamma
}u+\alpha _{2}(u)\bigg).  \label{dom-sub-J3-2}
\end{equation}%
In particular, if for every $U=(u,u_{\mid \partial \Omega })\in D(\mathcal{J}%
_{3,\rho }),$ the function $\left( \alpha _{1}(u),\alpha _{2}(u)\right) \in 
\mathbb{X}_{2}$, then 
\begin{equation*}
\partial \mathcal{J}_{3,\rho }:=\partial (\mathcal{J}_{\rho }+\mathcal{J}%
_{2})=\partial \mathcal{J}_{\rho }+\partial \mathcal{J}_{2}.
\end{equation*}
\end{lemma}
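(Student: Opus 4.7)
The plan is to verify the two assertions in turn, starting from the elementary inclusion $\partial\mathcal{J}_\rho+\partial\mathcal{J}_2\subseteq\partial(\mathcal{J}_\rho+\mathcal{J}_2)=\partial\mathcal{J}_{3,\rho}$ (which holds by direct verification of the subdifferential inequality) and then refining it into the precise formula stated in the lemma.

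First I would observe that, by Lemma \ref{vector-space} and by the definition of $\mathcal{J}_\rho$, both $D(\mathcal{J}_\rho)=\mathcal{V}_\rho\cap\mathbb{X}_2$ and $D(\mathcal{J}_2)$ are real vector subspaces of $\mathbb{X}_2$, so $D(\mathcal{J}_{3,\rho})$ is itself a vector space. Fix $U=(u,u|_{\partial\Omega})\in D(\mathcal{J}_{3,\rho})$ and $F=(f,g)\in\partial\mathcal{J}_{3,\rho}(U)\subset\mathbb{X}_2$. Given any $W=(w,w|_{\partial\Omega})\in D(\mathcal{J}_{3,\rho})$ and any $t\in(0,1]$, the translate $U+tW$ still belongs to $D(\mathcal{J}_{3,\rho})$, so the subdifferential inequality reads
\[
\int_{\overline{\Omega}}FW\,d\mu\le\frac{\mathcal{J}_{3,\rho}(U+tW)-\mathcal{J}_{3,\rho}(U)}{t}.
\]
The key analytical step is then the passage to the limit $t\downarrow 0$ on the right-hand side. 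Convexity of $\Lambda_j$ and of the $p$- and $q$-th power maps, combined with the admissibility of both $U$ and $U+W$, gives the integrable majorant required by dominated convergence; the limit equals
\[
\int_\Omega|\nabla u|^{p-2}\nabla u\cdot\nabla w\,dx+\rho\int_{\partial\Omega}|\nabla_\Gamma u|^{q-2}\nabla_\Gamma u\cdot\nabla_\Gamma w\,d\sigma+\int_\Omega\alpha_1(u)w\,dx+\int_{\partial\Omega}\alpha_2(u)w\,\frac{d\sigma}{b}.
\]
Replacing $W$ by $-W$ upgrades the resulting inequality to an equality.

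Next I would interpret this variational identity as in Remark \ref{cal-sub}. Choosing $w\in\mathcal{D}(\Omega)$ (extended by $0$ on $\partial\Omega$) identifies $-\Delta_p u+\alpha_1(u)=f$ in $\mathcal{D}'(\Omega)$, hence $-\Delta_p u+\alpha_1(u)\in L^2(\Omega)$; returning to the general test function and integrating by parts then yields the boundary identity $b|\nabla u|^{p-2}\partial_n u-\rho b\Delta_{q,\Gamma}u+\alpha_2(u)=g$ weakly on $\partial\Omega$, with left-hand side in $L^2(\partial\Omega,d\sigma/b)$. This gives the claimed description of $D(\partial\mathcal{J}_{3,\rho})$ and the formula \eqref{dom-sub-J3-2}; conversely, any $U$ in this candidate domain satisfies the subdifferential inequality by summing the two analogous computations carried out for $\mathcal{J}_\rho$ (Remark \ref{cal-sub}) and for $\mathcal{J}_2$ (Lemma \ref{sub-1}).

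For the final assertion, \eqref{dom-sub-J3-2} rewrites as
\[
\partial\mathcal{J}_{3,\rho}(U)=\bigl(-\Delta_p u,\;b|\nabla u|^{p-2}\partial_n u-\rho b\Delta_{q,\Gamma}u\bigr)+\bigl(\alpha_1(u),\alpha_2(u)\bigr).
\]
Under the extra hypothesis $(\alpha_1(u),\alpha_2(u))\in\mathbb{X}_2$ the second summand is $B_2(U)=\partial\mathcal{J}_2(U)$ by Lemma \ref{sub-1}, and by subtraction the first summand lies in $\mathbb{X}_2$ and equals $\partial\mathcal{J}_\rho(U)$ in view of \eqref{J-rho}. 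Hence $\partial\mathcal{J}_{3,\rho}=\partial\mathcal{J}_\rho+\partial\mathcal{J}_2$ as desired. The main obstacle throughout is the dominated-convergence step: one must control $t^{-1}[\Lambda_j(u+tw)-\Lambda_j(u)]$ and $t^{-1}[|\nabla(u+tw)|^p-|\nabla u|^p]$ uniformly for $t\in(0,1]$ by an integrable function, and it is precisely the $(\triangle_2)$-type assumption underlying Lemma \ref{vector-space} that keeps $D(\mathcal{J}_{3,\rho})$ stable under the required translations and makes the variational computation rigorous.
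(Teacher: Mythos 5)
Your proposal is correct and follows essentially the same route as the paper: the difference-quotient computation $V=U+tW$ with $t\downarrow 0$ (justified by convexity and the vector-space structure of $D(\mathcal{J}_{3,\rho})$ from Lemma \ref{vector-space}), the identification of $f$ and $g$ exactly as in Remark \ref{cal-sub} and Lemma \ref{sub-1}, and the splitting of \eqref{dom-sub-J3-2} under the extra hypothesis $(\alpha_1(u),\alpha_2(u))\in\mathbb{X}_2$ to get $\partial\mathcal{J}_{3,\rho}=\partial\mathcal{J}_\rho+\partial\mathcal{J}_2$. If anything, you supply a bit more detail (the integrable majorant for the limit and the converse verification of the subdifferential inequality) than the paper, which simply refers back to those earlier arguments.
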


\begin{proof}
We calculate the subdifferential $\partial \mathcal{J}_{3,\rho }$. Let $%
F=(f,g)\in \partial \mathcal{J}_{3,\rho }(U)$, that is, $F\in \mathbb{X}_{2}$%
, $U\in D(\mathcal{J}_{3,\rho })=D(\mathcal{J}_{\rho })\cap D(\mathcal{J}%
_{2})$ and for every $V\in D(\mathcal{J}_{3,\rho })$, we have%
\begin{equation*}
\int_{\overline{\Omega }}F(V-U)d\mu \leq \mathcal{J}_{3,\rho }(V)-\mathcal{J}%
_{3,\rho }(U).
\end{equation*}%
Proceeding as in Remark \ref{cal-sub} and the proof of Lemma \ref{sub-1}, we
obtain that%
\begin{equation*}
-\Delta _{p}u+\alpha _{1}(u)=f\;\;\mbox{ in }\;\mathcal{D}(\Omega )^{\prime
},
\end{equation*}%
and 
\begin{equation*}
b(x)|\nabla u|^{p-2}\partial _{n}u-b(x)\rho \Delta _{q,\Gamma }u+\alpha
_{2}(u)=g\;\mbox{ weakly on }\;\partial \Omega .
\end{equation*}%
Noting that $\partial \mathcal{J}_{3,\rho }$ is also a single-valued
operator (which follows from the assumptions on $\alpha _{j}$ and $\Lambda
_{j}$), we easily obtain \eqref{dom-sub-J3-2}, and this completes the proof
of the first part.

To show the last part, note that it is clear that $\partial \mathcal{J}%
_{\rho }+\partial \mathcal{J}_{2}\subset \partial \mathcal{J}_{3,\rho }$
always holds. To show the converse inclusion, let assume that for every $%
U=(u,u_{\mid \partial \Omega })\in D(\mathcal{J}_{3,\rho }),$ the function $%
(\alpha _{1}(u),\alpha _{2}(u))\in \mathbb{X}_{2}$. Then it follows from %
\eqref{J-rho}, \eqref{3.3} (since $\partial \mathcal{J}_{2}=B_{2}$) and %
\eqref{dom-sub-J3-2}, that $D(\partial \mathcal{J}_{3,\rho })=D(\partial 
\mathcal{J}_{\rho })\cap D(\partial \mathcal{J}_{2})$ and 
\begin{align*}
\partial \mathcal{J}_{3,\rho }(U)& =\left( -\Delta _{p}u+\alpha
_{1}(u),b(x)|\nabla u|^{p-2}\partial _{n}u-b(x)\rho \Delta _{q,\Gamma
}u+\alpha _{2}(u)\right)  \\
& =\left( -\Delta _{p}u,b(x)|\nabla u|^{p-2}\partial _{n}u-b(x)\rho \Delta
_{q,\Gamma }u\right) +\left( \alpha _{1}(u),\alpha _{2}(u)\right)  \\
& =\partial \mathcal{J}_{\rho }(U)+\partial \mathcal{J}_{2}(U).
\end{align*}%
This completes the proof.
\end{proof}

The following lemma is the main ingredient in the proof of Theorem \ref{T3}
below.

\begin{lemma}
Let $B_{1}:=A_{\rho }$ and set $B_{3}:=\partial \mathcal{J}_{3,\rho }$. Then 
\begin{equation}
\mathcal{R}\left( B_{1}\right) +\mathcal{R}\left( B_{2}\right) \subset 
\overline{\mathcal{R}(B_{3})}\;\mbox{ and }\;\mbox{Int}(\mathcal{R}\left(
B_{1}\right) +\mathcal{R}\left( B_{2}\right) )\subset \mathcal{R}(B_{3}).
\label{sum-subdi}
\end{equation}%
In particular, if for every $U=(u,u_{\mid \partial \Omega })\in D(\mathcal{J}%
_{3,\rho }),$ the function $(\alpha _{1}(u),\alpha _{2}(u))\in \mathbb{X}%
_{2} $, then 
\begin{equation}
\mathcal{R}\left( B_{3}\right) :=\mathcal{R}\left( B_{1}+B_{2}\right) \simeq 
\mathcal{R}\left( B_{1}\right) +\mathcal{R}\left( B_{2}\right) .  \label{3.9}
\end{equation}
\end{lemma}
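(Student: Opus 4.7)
The plan is to apply the Brezis--Haraux theorem (Theorem \ref{T2}) directly to the operators $B_1$ and $B_2$, using the identifications already established. First I would note that $B_1 = A_\rho = \partial \mathcal{J}_\rho$ by Remark \ref{cal-sub} (combined with Proposition \ref{PCLS}), and that $B_2 = \partial \mathcal{J}_2$ by Lemma \ref{sub-1}, so both $B_1$ and $B_2$ are subdifferentials of proper, convex, lower semicontinuous functionals on the real Hilbert space $\mathbb{X}_2$. Moreover, every constant $C = (c,c)$ belongs to $D(\mathcal{J}_\rho) \cap D(\mathcal{J}_2)$ (since $\alpha_j(0) = 0$ gives $\Lambda_j(0) = 0$, so $\mathcal{J}_2(C) = 0$, and $\mathcal{J}_\rho(C) = 0$ trivially), so $D(\mathcal{J}_\rho) \cap D(\mathcal{J}_2) \neq \emptyset$. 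By the very definition of $\mathcal{J}_{3,\rho}$ we have $\mathcal{J}_{3,\rho} = \mathcal{J}_\rho + \mathcal{J}_2$, so $B_3 = \partial(\mathcal{J}_\rho + \mathcal{J}_2)$.

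With these identifications, the first part of Theorem \ref{T2} applies verbatim, yielding
\begin{equation*}
\mathcal{R}(B_1) + \mathcal{R}(B_2) \subset \overline{\mathcal{R}(B_3)} \quad \text{and} \quad \mathrm{Int}\bigl(\mathcal{R}(B_1) + \mathcal{R}(B_2)\bigr) \subset \mathcal{R}(B_3),
\end{equation*}
which establishes \eqref{sum-subdi}. This first step is essentially immediate once the identification of subdifferentials is in place.

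For the second assertion, I would invoke the last part of Lemma \ref{sub-sum}: under the additional hypothesis that $(\alpha_1(u),\alpha_2(u)) \in \mathbb{X}_2$ for every $U = (u, u|_{\partial \Omega}) \in D(\mathcal{J}_{3,\rho})$, one has $\partial \mathcal{J}_{3,\rho} = \partial \mathcal{J}_\rho + \partial \mathcal{J}_2$, i.e. $B_3 = B_1 + B_2$. Since $B_3$ is the subdifferential of a proper, convex, lower semicontinuous functional, it is maximal monotone by Minty's theorem (as recalled in Section \ref{preli}). Therefore $B_1 + B_2$ is maximal monotone, and the final clause of Theorem \ref{T2} gives
\begin{equation*}
\mathcal{R}(B_3) = \mathcal{R}(B_1 + B_2) \simeq \mathcal{R}(B_1) + \mathcal{R}(B_2),
\end{equation*}
which is precisely \eqref{3.9}.

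The only real obstacle in this argument is the verification that $\partial(\mathcal{J}_\rho + \mathcal{J}_2) = \partial \mathcal{J}_\rho + \partial \mathcal{J}_2$ under the given integrability condition on $(\alpha_1(u),\alpha_2(u))$; however this has already been accomplished in the second part of Lemma \ref{sub-sum} via the explicit computation of the three subdifferentials, so the present lemma reduces to a direct citation of that fact together with the Brezis--Haraux theorem. No further estimates or constructions are required.
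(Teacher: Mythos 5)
Your proposal is correct and follows essentially the same route as the paper: identify $B_1$, $B_2$, $B_3$ as the subdifferentials of the proper, convex, lower semicontinuous functionals $\mathcal{J}_\rho$, $\mathcal{J}_2$, $\mathcal{J}_\rho+\mathcal{J}_2$ via Remark \ref{cal-sub} and Lemmas \ref{sub-1}, \ref{sub-sum}, then quote the Brezis--Haraux theorem for \eqref{sum-subdi}, and use the equality $B_3=B_1+B_2$ from Lemma \ref{sub-sum} plus maximal monotonicity for \eqref{3.9}. Your explicit check that the constants lie in $D(\mathcal{J}_\rho)\cap D(\mathcal{J}_2)$ (so the intersection of effective domains is nonempty, as Theorem \ref{T2} requires) is a small point the paper leaves implicit, and is a welcome addition.
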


\begin{proof}
By Remark \ref{cal-sub} and Lemmas \ref{sub-1}, \ref{sub-sum}, the operators 
$B_{1}$, $B_{2}$ and $B_{3}$ are subdifferentials of proper, convex and
lower semicontinuous functionals $\mathcal{J}_{\rho },\mathcal{J}_{2}$ and $%
\mathcal{J}_{\rho }+\mathcal{J}_{2}$, respectively, on $\mathbb{X}_{2}$.
Hence, $B_{1}$, $B_{2}$ and $B_{3}$ are maximal monotone operators. In
particular, if $(\alpha _{1}(u),\alpha _{2}(u))\in \mathbb{X}_{2}$, for
every $U=(u,u_{\mid \partial \Omega })\in D(\mathcal{J}_{3,\rho }),$ then by
Lemma \ref{sub-sum}, one has $B_{3}=B_{1}+B_{2}$. Now, the lemma follows
from the celebrated Brezis-Haraux result in Theorem \ref{T2}.
\end{proof}

\subsection{Statement and proof of the main result}

Next, let $\mathcal{V}_{\rho }:=D(\mathcal{J}_{3,\rho })$ be given by %
\eqref{dom1} if $\rho =0$ and by \eqref{dom2} if $\rho =1$.

\begin{definition}
\label{def-weak-sol} Let $F=(f,g)\in \mathbb{X}_{2}$. A function $u\in
W^{1,p}(\Omega )$ is said to be a weak solution of \eqref{3.1}, if $\alpha
_{1}(u)\in L^{1}(\Omega ),$ $\alpha _{2}(u)\in L^{1}(\partial \Omega )$, $%
u|_{\partial \Omega }\in W^{1,q}(\partial \Omega ),$ if $\rho >0$ and 
\begin{align}
& \int_{\Omega }|\nabla u|^{p-2}\nabla u\cdot \nabla vdx+\rho \int_{\partial
\Omega }|\nabla _{\Gamma }u|^{q-2}\nabla _{\Gamma }u\cdot \nabla _{\Gamma
}vd\sigma  \label{weak-sol} \\
& +\int_{\Omega }\alpha _{1}(u)vdx+\int_{\partial \Omega }\alpha _{2}(u)v%
\frac{d\sigma }{b}=\int_{\Omega }fvdx+\int_{\partial \Omega }gv\frac{d\sigma 
}{b},  \notag
\end{align}%
for every $v\in W^{1,p}(\Omega )\cap C(\overline{\Omega })$ with $%
v|_{\partial \Omega }\in W^{1,q}(\partial \Omega ),$ if $\rho >0$.
\end{definition}

Recall that $\lambda _{1}:=\int_{\Omega }dx$ and $\displaystyle\lambda
_{2}:=\int_{\partial \Omega }\frac{d\sigma }{b}$. We also define the average 
$\left\langle F\right\rangle _{\overline{\Omega }}$ of $F=\left( f,g\right) $
with respect to the measure $\mu ,$ as follows:%
\begin{equation*}
\left\langle F\right\rangle _{\overline{\Omega }}:=\frac{1}{\mu \left( 
\overline{\Omega }\right) }\int_{\overline{\Omega }}Fd\mu =\frac{1}{\mu
\left( \overline{\Omega }\right) }\left( \int_{\Omega }fdx+\int_{\partial
\Omega }g\frac{d\sigma }{b}\right) ,
\end{equation*}%
where $\mu \left( \overline{\Omega }\right) =\lambda _{1}+\lambda _{2}$.
Now, we are ready to state the main result of this section.

\begin{theorem}
\label{T3} Let $\alpha _{j}$ $(j=1,2)$ satisfy Assumption \ref{assump-1} and
assume that the functions $\Lambda _{j}$ $(j=1,2)$ satisfy \eqref{mod-delta}%
. Let $F=\left( f,g\right) \in \mathbb{X}_{2}$. The following hold:

\begin{enumerate}
\item Suppose that the nonlinear elliptic problem \eqref{3.1} possesses a
weak solution. Then%
\begin{equation}
\left\langle F\right\rangle _{\overline{\Omega }}\in \frac{\lambda _{1}%
\mathcal{R}\left( \alpha _{1}\right) +\lambda _{2}\mathcal{R}\left( \alpha
_{2}\right) }{\lambda _{1}+\lambda _{2}}.  \label{3.7}
\end{equation}

\item Assume that 
\begin{equation}
\left\langle F\right\rangle _{\overline{\Omega }}\in \mbox{int}\left( \frac{%
\lambda _{1}\mathcal{R}\left( \alpha _{1}\right) +\lambda _{2}\mathcal{R}%
\left( \alpha _{2}\right) }{\lambda _{1}+\lambda _{2}}\right) .  \label{3.8}
\end{equation}%
Then the nonlinear elliptic problem \eqref{3.1} has at least one weak
solution.
\end{enumerate}
\end{theorem}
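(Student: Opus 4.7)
The plan is to identify weak solutions of \eqref{3.1} with elements of $\mathcal{R}(B_{3})$, where $B_{3}=\partial\mathcal{J}_{3,\rho}$, and then to characterize this range through the Brezis--Haraux type inclusion established in the preceding lemma combined with Theorem~\ref{QFA}. The first step is to reconcile Definition~\ref{def-weak-sol} with the functional-analytic condition ``$U=(u,u_{|\partial\Omega})\in D(B_{3})$ and $B_{3}U=F$''. The formal equivalence follows from the computation of $\partial\mathcal{J}_{3,\rho}$ in Lemma~\ref{sub-sum}; the $(\triangle_{2})$-type hypothesis \eqref{mod-delta} is invoked via Lemma~\ref{vector-space} to place the candidate solution in the effective domain $D(\mathcal{J}_{3,\rho})$.

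For the necessity (Part 1), my approach is completely direct: taking the constant test function $v\equiv 1$ in the weak formulation \eqref{weak-sol} (which is admissible in either case $\rho\in\{0,1\}$) yields
\begin{equation*}
\int_{\Omega}\alpha_{1}(u)\,dx+\int_{\partial\Omega}\alpha_{2}(u)\,\frac{d\sigma}{b}=\int_{\overline{\Omega}}F\,d\mu.
\end{equation*}
Since $\alpha_{j}$ is continuous and monotone, its range $\mathcal{R}(\alpha_{j})$ is an interval, and $\alpha_{j}(u(\cdot))$ takes values in it pointwise; a short argument excluding a.e.\ constancy at an endpoint of $\mathcal{R}(\alpha_{j})$ that is not attained shows that each integral on the left actually belongs to $\lambda_{j}\mathcal{R}(\alpha_{j})$, $j=1,2$. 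Dividing the resulting identity by $\mu(\overline{\Omega})=\lambda_{1}+\lambda_{2}$ yields \eqref{3.7}.

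For the sufficiency (Part 2), I would invoke the inclusion $\mathrm{Int}(\mathcal{R}(B_{1})+\mathcal{R}(B_{2}))\subset\mathcal{R}(B_{3})$ from the preceding lemma, so it suffices to verify $F\in\mathrm{Int}(\mathcal{R}(B_{1})+\mathcal{R}(B_{2}))$. Let $\Phi:\mathbb{X}_{2}\rightarrow\mathbb{R}$ denote the continuous surjective linear functional $\Phi(F):=\int_{\overline{\Omega}}F\,d\mu$. By Theorem~\ref{QFA}, $\mathcal{R}(B_{1})=\ker\Phi$. Using constant ``lifts'' $(c_{1},c_{2})\in\mathbb{R}^{2}$ for $B_{2}$ (so that $(\alpha_{1}(c_{1}),\alpha_{2}(c_{2}))\in\mathcal{R}(B_{2})$), together with the endpoint argument used for necessity, one obtains the identification
\begin{equation*}
\mathcal{R}(B_{1})+\mathcal{R}(B_{2})=\Phi^{-1}\bigl(\lambda_{1}\mathcal{R}(\alpha_{1})+\lambda_{2}\mathcal{R}(\alpha_{2})\bigr).
\end{equation*}
Since $\Phi$ is open as a nonzero continuous linear map onto $\mathbb{R}$, this gives $\mathrm{Int}(\mathcal{R}(B_{1})+\mathcal{R}(B_{2}))=\Phi^{-1}(\mathrm{int}\,J)$ with $J:=\lambda_{1}\mathcal{R}(\alpha_{1})+\lambda_{2}\mathcal{R}(\alpha_{2})$. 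Hypothesis \eqref{3.8} says exactly that $\Phi(F)\in\mathrm{int}\,J$, hence $F\in\mathcal{R}(B_{3})$, which produces the desired weak solution.

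The hardest step will be the precise identification of the sum of ranges above and the transfer of the (one-dimensional) interior condition on $\Phi(F)$ to a genuine interior condition on $F$ in the infinite-dimensional space $\mathbb{X}_{2}$. The forward inclusion $\mathcal{R}(B_{1})+\mathcal{R}(B_{2})\supset\Phi^{-1}(J)$ requires a constructive decomposition $F=G+H$ with $H$ a suitable constant pair in $\mathcal{R}(B_{2})$ and $G\in\ker\Phi$; the reverse inclusion relies on the interval structure of $\mathcal{R}(\alpha_{j})$ together with the pointwise endpoint argument. Once these are in place the abstract machinery does the remaining work, with the $(\triangle_{2})$-type assumption entering only ``at the operator level'' to guarantee $\partial\mathcal{J}_{3,\rho}=\partial\mathcal{J}_{\rho}+\partial\mathcal{J}_{2}$, making the comparison between $\mathcal{R}(B_{3})$ and $\mathcal{R}(B_{1})+\mathcal{R}(B_{2})$ legitimate.
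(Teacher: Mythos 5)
Your proof is correct and follows essentially the same route as the paper: necessity by testing with $v\equiv 1$ (your endpoint argument is the implicit justification the paper omits), and sufficiency by decomposing $F=(F-C)+C$ with a constant pair $C\in\mathcal{R}(B_{2})$, using Theorem \ref{QFA} for $\mathcal{R}(B_{1})$ and the Brezis--Haraux inclusion \eqref{sum-subdi}; your openness-of-$\Phi$ step is just a repackaging of the paper's $\varepsilon$--$\delta$ continuity argument for the map $F\mapsto\left\langle F\right\rangle _{\overline{\Omega }}$. One small correction to your closing remark: neither you nor the paper needs $\partial \mathcal{J}_{3,\rho }=\partial \mathcal{J}_{\rho }+\partial \mathcal{J}_{2}$ (the remark following Theorem \ref{T3} stresses that this equality would require growth conditions and is not used); the hypothesis \eqref{mod-delta} enters instead through Lemma \ref{vector-space} (making $D(\mathcal{J}_{2})$ a vector space), which underlies Lemmas \ref{sub-1} and \ref{sub-sum} and hence the inclusion \eqref{sum-subdi} you invoke.
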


\begin{proof}
We show that condition \eqref{3.7} is necessary. Let $F:=(f,g)\in \mathbb{X}%
_{2}$ and let $U=\left(u,u_{\mid \partial \Omega }\right) \in
D(B_{3})\subset \mathcal{V}_{\rho }$ be a weak solution of $B_{3}U=F$. Then,
by definition, for every $V=(v,v|_{\partial \Omega })\in \mathcal{V}_{\rho
}, $ (\ref{weak-sol}) holds. Taking $v\equiv 1$ in (\ref{weak-sol}) yields%
\begin{equation*}
\int_{\Omega }f\;dx+\int_{\partial \Omega }g\;\frac{d\sigma }{b}%
=\int_{\Omega }\alpha _{1}\left( u\right) dx+\int_{\partial \Omega }\alpha
_{2}\left( u\right) \frac{d\sigma }{b}.
\end{equation*}%
Hence,%
\begin{equation*}
\int_{\Omega }f\;dx+\int_{\partial \Omega }g\;\frac{d\sigma }{b}\in
\left(\lambda _{1}\mathcal{R}\left( \alpha _{1}\right) +\lambda _{2}\mathcal{%
R}\left( \alpha _{2}\right)\right ),
\end{equation*}%
and so \eqref{3.7} holds. This completes the proof of part (a).\newline

We show that the condition \eqref{3.8} is sufficient.

(i) First, let $C\in \mathbf{C}$, where%
\begin{equation*}
\mathbf{C:}=\left\{ C=(c_{1},c_{2}):(c_{1},c_{2})\in \mathcal{R}(\alpha
_{1})\times \mathcal{R}(\alpha _{2})\right\} .
\end{equation*}%
By definition, one has that $\mathbf{C}\subset \mathcal{R}\left(
B_{2}\right) $ since $c_{1}=\alpha _{1}\left( d_{1}\right) $ for some
constant function $d_{1}$ on $\Omega $ and $c_{2}=\alpha _{2}(d_{2})$ for
some constant function $d_{2}$ on $\partial \Omega $. Let $F\in \mathbb{X}%
_{2}$ be such that (\ref{3.8}) holds. We must show $F\in \mathcal{R}\left(
B_{3}\right) $. By (\ref{3.8}), we may choose $C=\left( c_{1},c_{2}\right)
\in \mathbf{C}$ such that%
\begin{equation*}
\left\langle F\right\rangle _{\overline{\Omega }}=\frac{\lambda
_{1}c_{1}+\lambda _{2}c_{2}}{\lambda _{1}+\lambda _{2}}\in \mbox{int}\left( 
\frac{\lambda _{1}\mathcal{R}\left( \alpha _{1}\right) +\lambda _{2}\mathcal{%
R}\left( \alpha _{2}\right) }{\lambda _{1}+\lambda _{2}}\right) .
\end{equation*}%
Then, for $F\in \mathbb{X}_{2}$, we have $F=F_{1}+F_{2}$ with%
\begin{equation*}
F_{1}:=F-C\mbox{ and }\;F_{2}=C.
\end{equation*}%
First, $F_{1}\in \mathcal{R}\left( B_{1}\right) =\mathcal{N}\left(
B_{1}\right) ^{\perp }=\mathbf{1}^{\perp }$, since%
\begin{eqnarray*}
\displaystyle\int_{\overline{\Omega }}F_{1}d\mu &=&\int_{\overline{\Omega }%
}\left( F-C\right) d\mu \\
&=&\int_{\Omega }f\text{ }dx+\int_{\partial \Omega }g\text{ }\frac{d\sigma }{%
b}-\left( \lambda _{1}c_{1}+\lambda _{2}c_{2}\right) \\
&=&\left( \lambda _{1}+\lambda _{2}\right) \left\langle F\right\rangle _{%
\overline{\Omega }}-\left( \lambda _{1}c_{1}+\lambda _{2}c_{2}\right) =0.
\end{eqnarray*}%
Obviously, $F_{2}=C\in \mathcal{R}\left( B_{2}\right) $. Hence, it is
readily seen that 
\begin{equation*}
F\in (\mathcal{R}\left( B_{1}\right) +\mathcal{R}\left( B_{2}\right) ).
\end{equation*}

(ii) Next, denote by $\mathbb{B}_{\mathbb{R}}(x,r)$ the open ball in $%
\mathbb{R}$ of center $x$ and radius $r>0$. Since%
\begin{equation*}
\displaystyle\left\langle F\right\rangle _{\overline{\Omega }}\in \mbox{int}%
\left( \frac{\lambda _{1}\mathcal{R}\left( \alpha _{1}\right) +\lambda _{2}%
\mathcal{R}\left( \alpha _{2}\right) }{\lambda _{1}+\lambda _{2}}\right) ,
\end{equation*}%
there exists $\delta >0$ such that the open ball%
\begin{equation*}
\displaystyle\mathbb{B}_{\mathbb{R}}(\left\langle F\right\rangle _{\overline{%
\Omega }},\delta )\subset \left( \frac{\lambda _{1}\mathcal{R}\left( \alpha
_{1}\right) +\lambda _{2}\mathcal{R}\left( \alpha _{2}\right) }{\lambda
_{1}+\lambda _{2}}\right) .
\end{equation*}%
Since the mapping $F\mapsto \left\langle F\right\rangle _{\overline{\Omega }%
} $ from $\mathbb{X}_{2}$ into $\mathbb{R}$ is continuous, then there exists 
$\varepsilon >0$ such that 
\begin{equation*}
\left\langle G\right\rangle _{\overline{\Omega }}\in \mathbb{B}_{\mathbb{R}%
}(\left\langle F\right\rangle _{\overline{\Omega }},\delta )\subset \left( 
\frac{\lambda _{1}\mathcal{R}\left( \alpha _{1}\right) +\lambda _{2}\mathcal{%
R}\left( \alpha _{2}\right) }{\lambda _{1}+\lambda _{2}}\right) ,
\end{equation*}%
for all $G\in \mathbb{X}_{2}$ satisfying $\Vert |F-G\Vert |_{2}<\varepsilon $%
. It finally follows from part (i) above that $(\mathcal{R}\left(
B_{1}\right) +\mathcal{R}\left( B_{2}\right) )$ contains an $\varepsilon $%
-ball in $\mathbb{X}_{2}$ centered at $F$. Therefore, 
\begin{equation*}
F\in \mbox{int}(\mathcal{R}\left( B_{1}\right) +\mathcal{R}\left(
B_{2}\right) )\subset \mathcal{R}(B_{3}).
\end{equation*}%
%
%
%
Consequently, problem (\ref{3.1}) is (weakly) solvable for every function $%
F=(f,g)\in \mathbb{X}_{2},$ if \eqref{3.8} holds. This completes the proof
of the theorem.
\end{proof}

\begin{remark}
It is important to remark that in order to prove Theorem \ref{T3}, we do not
require that $(\alpha _{1}(u),\alpha _{2}(u))$ should belong to $\mathbb{X}%
_{2},$ for every $U=(u,u_{\mid \Gamma })\in D(\mathcal{J}_{3,\rho }).$ In
particular, only the assumption (\ref{sum-subdi}) was needed. However, if
this happens, then we get the much stronger result in (\ref{3.9}) which
would require that the nonlinearities $\alpha _{1},\alpha _{2}$ satisfy
growth assumptions at infinity.
\end{remark}

We conclude this section with the following corollary and some examples.

\begin{corollary}
\label{Cor}Let the assumptions of Theorem \ref{T3} be satisfied. Let $%
F=\left( f,g\right) \in \mathbb{X}_{2}$. Assume that at least one of the
sets $\mathcal{R}\left( \alpha _{1}\right) $, $\mathcal{R}\left( \alpha
_{2}\right) $ is open. Then the nonlinear elliptic problem \eqref{3.1}
possesses a weak solution if and only if \eqref{3.8} holds.
\end{corollary}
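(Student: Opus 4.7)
The plan is to reduce the corollary to Theorem~\ref{T3} by observing that the openness hypothesis forces the necessary condition \eqref{3.7} to already be the sufficient condition \eqref{3.8}. Sufficiency is immediate: if \eqref{3.8} holds, Theorem~\ref{T3}(b) directly yields a weak solution, so nothing new is required here.

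For the nontrivial direction, suppose \eqref{3.1} has a weak solution. By Theorem~\ref{T3}(a) we obtain
\[
\langle F\rangle_{\overline{\Omega}} \in \frac{\lambda_1 \mathcal{R}(\alpha_1)+\lambda_2 \mathcal{R}(\alpha_2)}{\lambda_1+\lambda_2}.
\]
The key observation is that if one of $\mathcal{R}(\alpha_1)$ or $\mathcal{R}(\alpha_2)$ is open in $\mathbb{R}$, then so is the Minkowski sum $\lambda_1\mathcal{R}(\alpha_1)+\lambda_2 \mathcal{R}(\alpha_2)$. Indeed, say $\mathcal{R}(\alpha_1)$ is open; since $\lambda_1>0$, the dilate $\lambda_1\mathcal{R}(\alpha_1)$ is open, and then
\[
\lambda_1\mathcal{R}(\alpha_1)+\lambda_2\mathcal{R}(\alpha_2) \;=\; \bigcup_{y\in \lambda_2\mathcal{R}(\alpha_2)}\bigl(\lambda_1\mathcal{R}(\alpha_1)+y\bigr)
\]
is a union of open sets (and is nonempty since $\alpha_j(0)=0\in\mathcal{R}(\alpha_j)$). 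Dividing by the positive constant $\lambda_1+\lambda_2$ preserves openness, so the set on the right-hand side of \eqref{3.7} coincides with its own interior. Consequently \eqref{3.7} is equivalent to \eqref{3.8} in this setting, and the necessary condition upgrades to the sufficient one.

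Combining the two directions gives the stated equivalence. There is no real obstacle here: the work has already been done in Theorem~\ref{T3}, and the corollary is a purely topological consequence of the fact that adding (or scaling by a positive constant) an open set to an arbitrary nonempty set produces an open set.
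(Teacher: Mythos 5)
Your argument is correct and is exactly the (omitted) reasoning the paper intends: since one of $\mathcal{R}(\alpha_1)$, $\mathcal{R}(\alpha_2)$ is open and $\lambda_1,\lambda_2>0$, the set $\frac{\lambda_1\mathcal{R}(\alpha_1)+\lambda_2\mathcal{R}(\alpha_2)}{\lambda_1+\lambda_2}$ is open, so conditions \eqref{3.7} and \eqref{3.8} coincide and both directions follow from Theorem \ref{T3}. Nothing is missing.
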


\begin{remark}
Similar results to Theorem \ref{T3} and Corollary \ref{Cor} were also
obtained in \cite[Theorem 4.4]{GGGR}, but only when $p=q=2.$
\end{remark}

\subsection{Examples}

We will now give some examples as applications of Theorem \ref{T3}. Let$%
p,q\in (1,+\infty)$ be fixed. 

\begin{example}
\label{e4.5}Let $\alpha _{1}\left( s\right) $ or $\alpha _{2}\left( s\right) 
$ be equal to $\alpha \left( s\right) =c\left\vert s\right\vert ^{r-1}s,$
where $c,$ $r>0$. Note that $\mathcal{R}\left( \alpha \right) =\mathbb{R}$.
It is easy to check that $\alpha $ satisfies all the conditions of
Assumption \ref{assump-1} and that the function $\Lambda
(t)=\int_{0}^{|t|}\alpha (s)ds$ satisfies (\ref{mod-delta}). Then, it
follows that problem (\ref{3.1}) is solvable for any $f\in L^{2}\left(
\Omega \right) ,$ $g\in L^{2}\left( \partial \Omega \right) $.
\end{example}

\begin{example}
\label{e4.6}Consider the case when $\rho =\alpha _{2}\equiv 0$ in (\ref{3.1}%
), that is, consider the following boundary value problem:%
\begin{equation*}
\left\{ 
\begin{array}{c}
-\Delta _{p}u+\alpha _{1}\left( u\right) =f\text{ in }\Omega , \\ 
b\left( x\right) \left\vert \nabla u\right\vert ^{p-2}\partial _{n}u=g \text{
on }\Gamma .%
\end{array}%
\right.
\end{equation*}%
Then, by Theorem \ref{T3}, this problem has a weak solution if%
\begin{equation*}
\int_{\Omega }f\text{ }dx+\int_{\partial \Omega }g\text{ }\frac{d\sigma }{b}%
\in \lambda _{1}\mbox{int}(\mathcal{R}\left( \alpha _{1}\right)) ,
\end{equation*}%
which yields the classical Landesman-Lazer result (see (\ref{1.6})) for $%
g\equiv 0$ and $p=2$.
\end{example}

\begin{example}
\label{e4.7}Let us now consider the case when $\alpha _{1}\equiv \alpha $
and $\alpha _{2}\equiv 0,$ where $\alpha $ is a continuous, odd and
nondecreasing function on $\mathbb{R}$ such that $\alpha \left( 0\right) =0$%
. The problem%
\begin{equation}
\left\{ 
\begin{array}{cc}
-\Delta _{p}u+\alpha \left( u\right) =f, & \text{in }\Omega , \\ 
b\left( x\right) \left\vert \nabla u\right\vert ^{p-2}\partial _{n}u-\rho
b\left( x\right) \Delta _{q,\Gamma }u=g, & \text{on }\partial \Omega ,%
\end{array}%
\right.  \label{e}
\end{equation}%
has a weak solution if%
\begin{equation}
\int_{\Omega }f\text{ }dx+\int_{\partial \Omega }g\text{ }\frac{d\sigma }{b}%
\in \lambda _{2}\mbox{int}\bigg(\mathcal{R}\left( \alpha \right)\bigg) .
\label{ee}
\end{equation}%
Let us now choose $\alpha \left( s\right) =\arctan \left( s\right) $ in (\ref%
{e}). Then, it is easy to check that%
\begin{equation*}
\Lambda (t):=\int_{0}^{|t|}\alpha (s)ds=\left\vert t\right\vert \arctan
\left( \left\vert t\right\vert \right) -\frac{1}{2}\ln \left( 1+t^{2}\right)
,\text{ }t\in \mathbb{R}
\end{equation*}%
is monotone increasing on $\mathbb{R}_{+}$ and that it satisfies $\Lambda
(2t)\leq C_{2}\Lambda (t),$\ $\forall t\in \mathbb{R}$, for some constant $%
C_{2}>1$. Therefore, (\ref{ee}) becomes the necessary and sufficient
condition%
\begin{equation}
\left\vert \frac{1}{\lambda _{2}}\left( \int_{\Omega }f\text{ }%
dx+\int_{\partial \Omega }g\text{ }\frac{d\sigma }{b}\right) \right\vert <%
\frac{\pi }{2}.
\end{equation}
\end{example}

\section{A priori estimates}

\label{priori}

Let $\Omega \subset \mathbb{R}^{N}$ be a bounded Lipschitz domain with
boundary $\partial \Omega $. Recall that $1<p,q<\infty $, $\rho \in \{0,1\}$
and $b\in L^{\infty }(\partial \Omega )$ with $b(x)\geq b_{0}>0,$ for some
constant $b_{0}$. We consider the nonlinear elliptic boundary value problem
formally given by 
\begin{equation}
\begin{cases}
\displaystyle-\Delta _{p}u+\alpha _{1}(x,u)+|u|^{p-2}u=f, & \mbox{ in }%
\;\Omega \\ 
&  \\ 
\displaystyle-\rho b(x)\Delta _{q,\Gamma }u+\rho b(x)|u|^{q-2}u+b(x)|\nabla
u|^{p-2}\partial _{n}u+\alpha _{2}(x,u)=g, & \mbox{ on }\;\partial \Omega ,%
\end{cases}
\label{eq-weak}
\end{equation}%
where $f\in L^{p_{1}}(\Omega )$ and $g\in L^{q_{1}}(\partial \Omega )$ for
some $1\leq p_{1},q_{1}\leq \infty $. If $\rho =0$, then the boundary
conditions in \eqref{eq-weak} are of Robin type. Existence and regularity of
weak solutions for this case have been obtained in \cite{BW} for $p=2$ (see
also \cite{W2} for the linear case) and for general $p$ in \cite{BW2}.
Therefore, we will concentrate our attention to the case $\rho =1$ only; in
this case, the boundary condition in (\ref{eq-weak}) is a generalized
Wentzell-Robin boundary condition. For the sake of simplicity, from now on
we will also take $b\equiv 1$.

\subsection{General assumptions}

Throughout this section, we assume that the functions $\alpha _{1}:\Omega
\times \mathbb{R}\rightarrow \mathbb{R}$ and $\alpha _{2}:\partial \Omega
\times \mathbb{R}\rightarrow \mathbb{R}$ satisfy the following conditions:

\begin{assumption}
\label{asump-51}%
\begin{equation*}
\begin{cases}
\displaystyle\alpha _{j}(x,\cdot )\mbox{ is odd and strictly increasing}, \\ 
\displaystyle\alpha _{j}(x,0)=0,\;\;\displaystyle\alpha _{j}(x,\cdot )%
\mbox{ is
continuous }, \\ 
\displaystyle\lim_{t\rightarrow 0}\frac{\alpha _{j}(x,t)}{t}=0,\;\;%
\displaystyle\lim_{t\rightarrow \infty }\frac{\alpha _{j}(x,t)}{t}=\infty,%
\end{cases}%
\end{equation*}%
for $\lambda _{N}$-a.e. $x\in \Omega $ if $j=1$ and $\sigma $-a.e. $x\in
\partial \Omega $ if $j=2$.
\end{assumption}

Since $\alpha _{j}(x,\cdot )$ are strictly increasing for $\lambda _{N}$%
-a.e. $x\in \Omega $ if $j=1$ and $\sigma $-a.e. $x\in \partial \Omega $ if $%
j=2$, then they have inverses which we denote by $\widetilde{\alpha }%
_{j}(x,\cdot )$ (cf. also Section 4). We define the functions $\Lambda _{1},$
$\widetilde{\Lambda }_{1}:\Omega \times \mathbb{R}\rightarrow \lbrack
0,\infty )$ and $\Lambda _{2},$ $\widetilde{\Lambda }_{2}:\partial \Omega
\times \mathbb{R}\rightarrow \lbrack 0,\infty )$ by 
\begin{equation*}
\Lambda _{j}(x,t):=\int_{0}^{|t|}\alpha _{j}(x,s)\;ds\;\mbox{ and
}\;\widetilde{\Lambda }_{j}(x,t):=\int_{0}^{|t|}\widetilde{\alpha }%
_{j}(x,s)\;ds.
\end{equation*}%
Then, it is clear that, for $\lambda _{N}$-a.e. $x\in \Omega $ if $j=1$ and $%
\sigma $-a.e. $x\in \partial \Omega $ if $j=2$, $\Lambda _{j}(x,\cdot )$ and 
$\widetilde{\Lambda }_{j}(x,\cdot )$ are differentiable, monotone and convex
with $\Lambda _{j}(x,0)=\widetilde{\Lambda }_{j}(x,0)=0.$ Furthermore, $%
\Lambda _{j}(x,\cdot )$ is an ${\mathcal{N}}$-function and $\widetilde{%
\Lambda }_{j}(x,\cdot )$ is its complementary ${\mathcal{N}}$-function. The
function $\widetilde{\Lambda }_{j}$ is then the complementary
Musielak-Orlick function of $\Lambda _{j}$ in the sense of Young (see
Definition \ref{def-24}).

\begin{assumption}
\label{asump-52} We assume, for $\lambda _{N}$-a.e. $x\in \Omega $ if $j=1$
and $\sigma $-a.e. $x\in \partial \Omega $ if $j=2$, that $\Lambda
_{j}(x,\cdot )$ and $\widetilde{\Lambda }_{j}(x,\cdot )$ satisfy the ($%
\triangle _{2}$)-condition in the sense of Definition \ref{N-func}.
\end{assumption}

It follows from Assumption \ref{asump-52} that there exist two constants $%
c_{1},c_{2}\in (0,1]$ such that for $\lambda _{N}$-a.e. $x\in \Omega $ if $%
j=1$ and $\sigma $-a.e. $x\in \partial \Omega $ if $j=2$ and for all $t\in 
\mathbb{R}$,%
\begin{equation}
c_{j}t\alpha _{j}(x,t)\leq \Lambda _{j}(x,t)\leq t\alpha _{j}(x,t).
\label{delta-22}
\end{equation}

Next, let 
\begin{equation*}
L_{\Lambda _{1}}(\Omega ):=\left\{ u:\Omega \rightarrow \mathbb{R}\;%
\mbox{
measurable: }\;\int_{\Omega }\Lambda _{1}(x,u)dx<\infty \right\}
\end{equation*}%
and 
\begin{equation*}
L_{\Lambda _{2}}(\partial \Omega ):=\left\{ u:\partial \Omega \rightarrow 
\mathbb{R}\;\mbox{ measurable: }\;\int_{\partial \Omega }\Lambda
_{2}(x,u)d\sigma <\infty \right\} .
\end{equation*}%
Since $\Lambda _{j}(x,\cdot )$ and $\widetilde{\Lambda }_{j}(x,\cdot )$
satisfy the $(\triangle _{2})$-condition, it follows from \cite[Theorem 8.19]%
{Adam}, that $L_{\Lambda _{1}}(\Omega )$ and $L_{\Lambda _{2}}(\partial
\Omega ),$ endowed respectively with the norms 
\begin{equation*}
\Vert u\Vert _{\Lambda _{1},\Omega }:=\inf \left\{ k>0:\int_{\Omega }\Lambda
_{1}\left( x,\frac{u(x)}{k}\right) dx\leq 1\right\} ,
\end{equation*}%
and 
\begin{equation*}
\Vert u\Vert _{\Lambda _{2},\partial \Omega }:=\inf \left\{
k>0:\int_{\partial \Omega }\Lambda _{2}\left( x,\frac{u(x)}{k}\right)
d\sigma \leq 1\right\} ,
\end{equation*}%
are reflexive Banach spaces. Moreover, by \cite[Section 8.11, p.234]{Adam},
the following generalized versions of H\"{o}lder's inequality will also
become useful in the sequel,%
\begin{equation}
\left\vert \int_{\Omega }uvdx\right\vert \leq 2\Vert u\Vert _{\Lambda
_{1},\Omega }\Vert v\Vert _{\widetilde{\Lambda }_{1},\Omega }  \label{hold-1}
\end{equation}%
and 
\begin{equation}
\left\vert \int_{\partial \Omega }uv\;d\sigma\right\vert \leq 2\Vert u\Vert
_{\Lambda _{2},\partial \Omega }\Vert v\Vert _{\widetilde{\Lambda }%
_{2},\partial \Omega }.  \label{hold-2}
\end{equation}

\subsection{Existence and uniqueness of weak solutions of perturbed equations%
}

Let 
\begin{equation*}
\mathcal{V}:=\{U:=(u,u|_{\partial \Omega }):\;u\in W^{1,p}(\Omega )\cap
L_{\Lambda _{1}}(\Omega ),\;u_{\mid \partial \Omega }\in W^{1,q}(\partial
\Omega )\cap L_{\Lambda _{2}}(\partial \Omega )\}.
\end{equation*}%
Then for every $1<p,q<\infty $, $\mathcal{V}$ endowed with the norm%
\begin{equation*}
\Vert U\Vert _{\mathcal{V}}=\Vert u\Vert _{W^{1,p}(\Omega )}+\Vert u\Vert
_{\Lambda _{1},\Omega }+\Vert u\Vert _{W^{1,q}(\partial \Omega )}+\Vert
u\Vert _{\Lambda _{2},\partial \Omega }
\end{equation*}%
is a reflexive Banach space. Recall that $\rho =1$. Throughout the
following, we denote by $\mathcal{V}^{\prime }$ the dual of $\mathcal{V}$. 

\begin{definition}
\label{def-form}A function $U=(u,u|_{\partial \Omega })\in \mathcal{V}$ is
said to be a weak solution of \eqref{eq-weak}, if for every $V\in \mathcal{V}%
=(v,v|_{\partial \Omega }),$%
\begin{equation}
\mathcal{A}(U,V)=\int_{\Omega }fvdx+\int_{\partial \Omega }gvd\sigma ,
\label{weak111}
\end{equation}%
provided that the integrals on the right-hand side exist. Here,%
\begin{align*}
\mathcal{A}(U,V)& :=\int_{\Omega }|\nabla u|^{p-2}\nabla u\cdot \nabla
vdx+\int_{\Omega }|u|^{p-2}uvdx \\
& +\int_{\Omega }\alpha _{1}(x,u)vdx+\int_{\partial \Omega }|\nabla _{\Gamma
}u|^{q-2}\nabla _{\Gamma }u\cdot \nabla _{\Gamma }vd\sigma \\
& +\int_{\partial \Omega }|u|^{q-2}uvd\sigma +\int_{\partial \Omega }\alpha
_{2}(x,u)vd\sigma .
\end{align*}
\end{definition}

\begin{lemma}
\label{lem-hemi-2} Assume Assumptions \ref{asump-51} and \ref{asump-52}. Let 
$1<p,q<\infty $ and $U\in \mathcal{V}$ be fixed. Then the functional $%
V\mapsto \mathcal{A}(U,V)$ belongs to $\mathcal{V}^{\prime }$. Moreover, $%
\mathcal{A}$ is strictly monotone, hemicontinuous and coercive.
\end{lemma}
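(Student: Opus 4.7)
The plan is to verify the four claims in the natural order: boundedness of $V\mapsto \mathcal{A}(U,V)$, strict monotonicity, hemicontinuity, and coercivity. Linearity of $V\mapsto \mathcal{A}(U,V)$ is immediate from the definition, so for the first claim I only need to bound each of the six terms. For the gradient term I would use the classical Hölder inequality with exponents $p/(p-1)$ and $p$, getting a bound by $\|\nabla u\|_{p,\Omega}^{p-1}\|\nabla v\|_{p,\Omega}$; similarly for the $|u|^{p-2}u$, $|\nabla_\Gamma u|^{q-2}\nabla_\Gamma u$, and $|u|^{q-2}u$ terms. For the $\alpha_j(x,u)$ terms I would invoke the Orlicz-type Hölder inequalities \eqref{hold-1}--\eqref{hold-2} together with the identity $\widetilde\Lambda_j(x,\alpha_j(x,u))=u\,\alpha_j(x,u)-\Lambda_j(x,u)$ from Lemma \ref{lemma-4-4} and the upper estimate in \eqref{delta-22} to show that $\alpha_1(x,u)\in L_{\widetilde\Lambda_1}(\Omega)$ and $\alpha_2(x,u)\in L_{\widetilde\Lambda_2}(\partial\Omega)$ whenever $u\in L_{\Lambda_1}(\Omega)\cap L_{\Lambda_2}(\partial\Omega)$, since $\widetilde\Lambda_j$ satisfies the $(\triangle_2)$-condition.

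For strict monotonicity I would compute $\mathcal{A}(U,U-V)-\mathcal{A}(V,U-V)$ and show each of the six resulting integrals is non-negative. The two gradient differences are controlled below by Lemma \ref{lem:ab} (inequality \eqref{in-ab}); the two $|w|^{r-2}w$ differences are handled similarly by the scalar version of the same inequality; and the two $\alpha_j$ differences are non-negative by the monotonicity of $\alpha_j(x,\cdot)$. Strictness then follows because if the sum vanishes, the pointwise estimates force $\nabla u=\nabla v$ a.e.\ in $\Omega$, $\nabla_\Gamma u=\nabla_\Gamma v$ a.e.\ on $\partial\Omega$, together with $u=v$ a.e.\ from the zero-order monotone terms, hence $U=V$.

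Hemicontinuity is a routine Lebesgue dominated convergence argument: fix $U,V,W\in \mathcal{V}$ and let $t\downarrow 0$. The integrands $|a+tb|^{r-2}(a+tb)\cdot c$ and the scalar analogues converge pointwise, and for $t\in(0,1]$ they are dominated, using the elementary inequality $|a+tb|^{r-1}\le C_r(|a|^{r-1}+|b|^{r-1})$, by integrable majorants (exactly those majorants already used in the boundedness step). Continuity of $\alpha_j(x,\cdot)$ and the Orlicz Hölder inequality dispose of the remaining two terms.

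Coercivity is the step I expect to require the most care. Using the lower estimate in \eqref{delta-22}, I obtain
\begin{equation*}
\mathcal{A}(U,U)\ge \|\nabla u\|_{p,\Omega}^{p}+\|u\|_{p,\Omega}^{p}+c_1\rho_{\Lambda_1}(u)+\|\nabla_\Gamma u\|_{q,\partial\Omega}^{q}+\|u\|_{q,\partial\Omega}^{q}+c_2\rho_{\Lambda_2}(u).
\end{equation*}
Since $\widetilde\Lambda_j$ satisfies $(\triangle_2^0)$, Corollary \ref{cor:coercive} yields $\rho_{\Lambda_j}(u)/\|u\|_{\Lambda_j}\to\infty$ as $\|u\|_{\Lambda_j}\to\infty$. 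Combined with the standard facts that $\|\cdot\|_{W^{1,p}(\Omega)}^{p}/\|\cdot\|_{W^{1,p}(\Omega)}\to\infty$ and $\|\cdot\|_{W^{1,q}(\partial\Omega)}^{q}/\|\cdot\|_{W^{1,q}(\partial\Omega)}\to\infty$, and the fact that $\|U\|_{\mathcal{V}}$ is the sum of the four norms $\|u\|_{W^{1,p}(\Omega)},\|u\|_{\Lambda_1,\Omega},\|u\|_{W^{1,q}(\partial\Omega)},\|u\|_{\Lambda_2,\partial\Omega}$, the ratio $\mathcal{A}(U,U)/\|U\|_{\mathcal{V}}$ tends to $+\infty$ as $\|U\|_{\mathcal{V}}\to\infty$; the main subtlety is handling the case where only one of the four norms blows up, which is exactly where the coercivity of $\rho_{\Lambda_j}$ relative to $\|\cdot\|_{\Lambda_j}$ provided by Corollary \ref{cor:coercive} is essential.
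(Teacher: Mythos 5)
Your proposal is correct and follows essentially the same route as the paper: boundedness via the classical and Orlicz--Hölder inequalities \eqref{hold-1}--\eqref{hold-2} (with $\alpha_j(x,u)$ placed in the complementary Orlicz space through \eqref{tt} and \eqref{delta-22}), strict monotonicity via \eqref{in-ab} and the monotonicity of $\alpha_j(x,\cdot)$, hemicontinuity by continuity plus dominated convergence, and coercivity from \eqref{delta-22} together with Proposition \ref{prop:coercive}/Corollary \ref{cor:coercive}. In fact your write-up supplies slightly more detail than the paper (explicit majorants for hemicontinuity, the component-splitting of $\Vert U\Vert_{\mathcal{V}}$ in the coercivity step), but the underlying argument is the same.
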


\begin{proof}
Let $U=(u,u|_{\partial \Omega })\in \mathcal{V}$ be fixed. It is clear that $%
\mathcal{A}(U,\cdot )$ is linear. Let $V=(v,v|_{\partial \Omega })\in 
\mathcal{V}$. Then, exploiting \eqref{hold-1} and \eqref{hold-2}, we obtain%
\begin{align}  \label{bounded}
& \left\vert \mathcal{A}(U,V)\right\vert \leq \Vert u\Vert
_{W^{1,p}(\Omega)}^{p-1}\|v\|_{W^{1,p}(\Omega)}+\Vert u\Vert
_{W^{1,q}(\partial \Omega )}^{q-1}\Vert v\Vert _{W^{1,q}(\partial \Omega )}
\\
& +2\max \left\{ 1,\int_{\Omega }\widetilde{\Lambda }_{1}(x,\alpha
_{1}(x,u))\;dx\right\} \Vert v\Vert _{\Lambda _{1},\Omega }  \notag \\
& +2\max \left\{ 1,\int_{\partial \Omega }\widetilde{\Lambda }_{2}(x,\alpha
_{2}(x,u))\;d\sigma \right\} \Vert v\Vert _{\Lambda _{2},\partial \Omega } 
\notag \\
& \leq K(U)\Vert V\Vert _{\mathcal{V}},  \notag
\end{align}%
where%
\begin{align*}
K(U):=& \Vert u\Vert _{W^{1,p}(\Omega )}^{p-1}+2\max \left\{ 1,\int_{\Omega }%
\widetilde{\Lambda }_{1}(x,\alpha _{1}(x,u))\;dx\right\} \\
& +\Vert u\Vert _{W^{1,q}(\partial \Omega )}^{q-1}+2\max \left\{
1,\int_{\partial \Omega }\widetilde{\Lambda }_{2}(x,\alpha
_{2}(x,u))\;d\sigma \right\} .
\end{align*}%
This shows $\mathcal{A}(U,\cdot )\in \mathcal{V}^{\prime },$ for every $U\in 
\mathcal{V}$.

Next, let $U,V\in \mathcal{V}$. Then, using \eqref{in-ab} and the fact that $%
\alpha _{j}(x,\cdot )$ are monotone nondecreasing, that is, $(\alpha
_{j}(x,t)-\alpha _{j}(x,s))(t-s)\geq 0,$ for all $t,s\in \mathbb{R},$ we
obtain%
\begin{align}
& \mathcal{A}(U,U-V)-\mathcal{A}(V,U-V)  \label{strictt} \\
& =\int_{\Omega }\left( |\nabla u|^{p-2}\nabla u-|\nabla v|^{p-2}\nabla
v\right) \cdot \nabla (u-v)dx+\int_{\Omega }\left(
|u|^{p-2}u-|v|^{p-2}v\right) (u-v)dx  \notag \\
& +\int_{\Omega }\left( \alpha _{1}(x,u)-\alpha _{1}(x,v)\right)
(u-v)dx+\int_{\partial \Omega }\left( |u|^{q-2}u-|v|^{q-2}v\right)
(u-v)d\sigma  \notag \\
& +\int_{\partial \Omega }\left( |\nabla _{\Gamma }u|^{q-2}\nabla _{\Gamma
}u-|\nabla _{\Gamma }v|^{q-2}\nabla v\right) \cdot \nabla _{\Gamma
}(u-v)d\sigma  \notag \\
& +\int_{\partial \Omega }\left( \alpha _{2}(x,u)-\alpha _{1}(x,v)\right)
(u-v)d\sigma  \notag \\
& \geq \int_{\Omega }\left( |\nabla u|+|\nabla v|\right) ^{p-2}|\nabla
(u-v)|^{2}dx+\int_{\Omega }\left( |u|+|v|\right) ^{p-2}|u-v|^{2}dx  \notag \\
& +\int_{\partial \Omega }\left( |\nabla _{\Gamma }u|+|\nabla _{\Gamma
}v|\right) ^{p-2}|\nabla _{\Gamma }(u-v)|^{2}d\sigma +\int_{\partial \Omega
}\left( |u|+|v|\right) ^{p-2}|u-v|^{2}d\sigma  \notag \\
& \geq 0.  \notag
\end{align}%
This shows that $\mathcal{A}$ is monotone. The estimate \eqref{strictt} also
shows that%
\begin{equation*}
\mathcal{A}(U,U-V)-\mathcal{A}(V,U-V)>0,
\end{equation*}%
for all $U,V\in V$ with $U\neq V$, that is, $u\ne v$ or $u|_{\partial\Omega}%
\ne v|_{\partial\Omega}$. Thus, $\mathcal{A}$ is strictly monotone.

The continuity of the norm function and the continuity of $\alpha
_{j}(x,\cdot ),$ $j=1,2$ imply that $\mathcal{A}$ is hemicontinuous.

Finally, since $\Lambda _{j}$ and $\widetilde{\Lambda }_{j}$ satisfy the $%
(\triangle _{2}^{0})$-condition, from Proposition \ref{prop:coercive} and
Corollary \ref{cor:coercive}, it follows%
\begin{align*}
\lim_{\Vert u\Vert _{\Lambda _{1},\Omega }\rightarrow +\infty }\frac{%
\int_{\Omega }u\alpha _{1}(x,u)\;dx}{\Vert u\Vert _{\Lambda _{1},\Omega }}
=+\infty , \mbox{ and }\; \lim_{\Vert u\Vert _{\Lambda _{2},\partial \Omega
}\rightarrow +\infty }\frac{\int_{\partial \Omega }u\alpha
_{2}(x,u)\;d\sigma }{\Vert u\Vert _{\Lambda _{2},\partial \Omega }}&
=+\infty .
\end{align*}%
Consequently, we deduce%
\begin{equation}
\lim_{\Vert U\Vert _{\mathcal{V}}\rightarrow +\infty }\frac{\mathcal{A}(U,U)%
}{\Vert U\Vert _{\mathcal{V}}}=+\infty ,  \label{coerc}
\end{equation}%
which shows that $\mathcal{A}$ is coercive. The proof of the lemma is
finished.
\end{proof}

The following result is concerned with the existence and uniqueness of weak
solutions to problem \eqref{eq-weak}.

\begin{theorem}
\label{sol-1} Assume Assumptions \ref{asump-51} and \ref{asump-52}. Let $%
1<p,q<\infty $, $p_{1}\geq p\ast $ and $q_{1}\geq q\ast $, where $p\ast
:=p/(p-1)$ and $q\ast :=q/(q-1)$. Then for every $(f,g)\in X^{p_{1},q_{1}}(%
\overline{\Omega },\mu )$, there exists a unique function $U\in \mathcal{V}$
which is a weak solution to \eqref{eq-weak}.
\end{theorem}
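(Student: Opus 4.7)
The proof is a direct application of the Browder-Minty surjectivity theorem for monotone operators on reflexive Banach spaces, essentially packaging the properties already established in Lemma~\ref{lem-hemi-2}.

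First, I would verify that the right-hand side of \eqref{weak111} defines a bounded linear functional $L$ on $\mathcal{V}$. Writing $p_1':=p_1/(p_1-1)$ and $q_1':=q_1/(q_1-1)$, the hypotheses $p_1\geq p^{\ast}=p/(p-1)$ and $q_1\geq q^{\ast}=q/(q-1)$ yield $p_1'\leq p$ and $q_1'\leq q$. Since $\Omega$ and $\partial\Omega$ have finite measure, the continuous inclusions $L^p(\Omega)\hookrightarrow L^{p_1'}(\Omega)$ and $L^q(\partial\Omega)\hookrightarrow L^{q_1'}(\partial\Omega)$ combined with H\"{o}lder's inequality yield, for every $V=(v,v|_{\partial\Omega})\in\mathcal{V}$,
\[
\Bigl|\int_\Omega fv\,dx+\int_{\partial\Omega}gv\,d\sigma\Bigr|\leq C\bigl(\|f\|_{L^{p_1}(\Omega)}\|v\|_{W^{1,p}(\Omega)}+\|g\|_{L^{q_1}(\partial\Omega)}\|v\|_{W^{1,q}(\partial\Omega)}\bigr)\leq C\|V\|_{\mathcal{V}},
\]
so $L\in\mathcal{V}'$.

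Next, I would define the nonlinear operator $\mathbf{A}:\mathcal{V}\to\mathcal{V}'$ by $\langle\mathbf{A}(U),V\rangle:=\mathcal{A}(U,V)$. By Lemma~\ref{lem-hemi-2}, $\mathbf{A}$ is well-defined (bounded), strictly monotone, hemicontinuous, and coercive, while $\mathcal{V}$ is reflexive. The classical Browder-Minty surjectivity theorem (see e.g.\ \cite{Br73,Scho}) then produces some $U\in\mathcal{V}$ satisfying $\mathbf{A}(U)=L$, which is precisely a weak solution of \eqref{eq-weak} in the sense of Definition~\ref{def-form}. Uniqueness follows from strict monotonicity: if $U_1,U_2\in\mathcal{V}$ are two weak solutions, subtracting the corresponding identities and testing with $V=U_1-U_2$ yields
\[
\mathcal{A}(U_1,U_1-U_2)-\mathcal{A}(U_2,U_1-U_2)=0,
\]
and the strict positivity of this expression whenever $U_1\neq U_2$, as established in the chain of inequalities \eqref{strictt}, forces $U_1=U_2$.

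No step of this proof constitutes a serious obstacle, since the substantive content has already been absorbed into Lemma~\ref{lem-hemi-2}. The only mildly delicate ingredient is the H\"{o}lder-type estimate showing $L\in\mathcal{V}'$, which crucially relies on the integrability assumptions $p_1\geq p^{\ast}$, $q_1\geq q^{\ast}$ together with the finiteness of the measures on $\Omega$ and $\partial\Omega$ to produce the embeddings $L^p(\Omega)\hookrightarrow L^{p_1'}(\Omega)$ and $L^q(\partial\Omega)\hookrightarrow L^{q_1'}(\partial\Omega)$.
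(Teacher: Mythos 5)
Your proposal is correct and follows essentially the same route as the paper: the paper also packages the form into an operator $A:\mathcal{V}\to\mathcal{V}'$, invokes Browder's surjectivity theorem using the boundedness, monotonicity, hemicontinuity and coercivity from Lemma \ref{lem-hemi-2}, and deduces uniqueness from strict monotonicity. The only cosmetic difference is that the paper realizes $(f,g)$ as an element of $\mathcal{V}'$ via the duality embeddings $X^{p_{1},q_{1}}(\overline{\Omega},\mu)\hookrightarrow X^{p\ast,q\ast}(\overline{\Omega},\mu)\hookrightarrow\mathcal{V}'$, whereas you verify the same fact directly with H\"older's inequality.
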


\begin{proof}
Let $\langle \cdot ,\cdot \rangle $ denote the duality between $\mathcal{V}$
and $\mathcal{V}^{\prime }$. Then, from Lemma \ref{lem-hemi-2}, it follows
that for each $U\in \mathcal{V}$, there exists $A(U)\in \mathcal{V}^{\prime
} $ such that 
\begin{equation*}
\mathcal{A}(U,V)=\langle A(U),V\rangle ,
\end{equation*}%
for every $V\in \mathcal{V}$. Hence, this relation defines an operator $A:\;%
\mathcal{V}\rightarrow \mathcal{V}^{\prime },$ which is bounded by (\ref%
{bounded}). Exploiting Lemma \ref{lem-hemi-2} once again, it is easy to see
that $A$ is monotone and coercive. It follows from Brodwer's theorem (see,
e.g., \cite[Theorem 5.3.22]{DM}), that $A(\mathcal{V})=\mathcal{V}^{\prime }$%
. Therefore, for every $F\in \mathcal{V}^{\prime }$ there exists $U\in 
\mathcal{V}$ such that $A(U)=F$, that is, for every $V\in \mathcal{V}$,%
\begin{equation*}
\langle A(U),V\rangle =\mathcal{A}(U,V)=\langle V,F\rangle .
\end{equation*}%
Since $W^{1,p}(\Omega )\hookrightarrow L^{p}(\Omega )$ and $W^{1,q}(\partial
\Omega )\hookrightarrow L^{q}(\partial \Omega )$ with dense injection, by
duality, we have $X^{p\ast ,q\ast }(\overline{\Omega },\mu )\hookrightarrow 
\mathcal{V}^{\prime }$. Since $\Omega $ is bounded and $\sigma (\partial
\Omega )<\infty $, we obtain that%
\begin{equation*}
X^{p_{1},q_{1}}(\overline{\Omega },\mu )\hookrightarrow X^{p\ast ,q\ast }(%
\overline{\Omega },\mu )\hookrightarrow \mathcal{V}^{\prime }.
\end{equation*}%
This shows the existence of weak solutions. The uniqueness follows from the
fact that $\mathcal{A}$ is strictly monotone (cf. Lemma \ref{lem-hemi-2}).
This completes the proof of the theorem.
\end{proof}

\begin{corollary}
Let the assumptions of Theorem \ref{sol-1} be satisfied. Let%
\begin{equation}
p_{h}:=\frac{Np}{N(p-1)+p},\;\;q_{h}:=\frac{p(N-1)}{N(p-1)},\;\mbox{ and }%
q_{k}:=\frac{q(N-1)}{N(q-1)+1}.  \label{p-q-h}
\end{equation}

\begin{enumerate}
\item Let $1<p<N$, $1<q<p(N-1)/N$, $p_{1}\geq p_{h}$ and $q_{1}\geq q_{k}$.
Then for every $(f,g)\in X^{p_{1},q_{1}}(\Omega ,\mu )$, there exists a
function $U\in \mathcal{V}$ which is the unique weak solution to %
\eqref{eq-weak}.

\item Let $1<q<N-1$, $1<p<Nq/(N-1)$, $p_{1}\geq p_{h}$ and $q_{1}\geq q_{h}$%
. Then for every $(f,g)\in X^{p_{1},q_{1}}(\Omega ,\mu )$, there exists a
function $U\in \mathcal{V}$ which is the unique weak solution to %
\eqref{eq-weak}.
\end{enumerate}
\end{corollary}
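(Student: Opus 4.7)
The plan is to deduce the corollary directly from Theorem \ref{sol-1}. Inspecting the last paragraph of the proof of that theorem, one sees that the hypotheses $p_{1}\geq p_{\ast }$, $q_{1}\geq q_{\ast }$ are used \emph{only} to guarantee the embedding $X^{p_{1},q_{1}}(\overline{\Omega },\mu )\hookrightarrow \mathcal{V}'$, via the trivial embeddings $W^{1,p}(\Omega )\hookrightarrow L^{p}(\Omega )$ and $W^{1,q}(\partial \Omega )\hookrightarrow L^{q}(\partial \Omega )$. All of the remaining ingredients---the monotonicity, hemicontinuity and coercivity of $\mathcal{A}$ established in Lemma \ref{lem-hemi-2} and the Browder--Minty surjectivity argument---are independent of $(p_{1},q_{1})$. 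It therefore suffices to re-establish $X^{p_{1},q_{1}}(\overline{\Omega },\mu )\hookrightarrow \mathcal{V}'$ under the weakened hypotheses of the corollary, using the sharper Sobolev embeddings \eqref{sob-ine1}, \eqref{trace}, and \eqref{qt} in place of the trivial ones.

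For the interior term, the assumption $1<p<N$ combined with \eqref{sob-ine1} gives $W^{1,p}(\Omega )\hookrightarrow L^{p_{s}}(\Omega )$ with $p_{s}=Np/(N-p)$. Dualising, and using that $\Omega $ has finite measure, one obtains $L^{p_{1}}(\Omega )\hookrightarrow L^{p_{h}}(\Omega )\hookrightarrow (W^{1,p}(\Omega ))'\hookrightarrow \mathcal{V}'$ whenever $p_{1}\geq p_{h}=(p_{s})'$, which is the hypothesis imposed in both (1) and (2). Hence $f\in L^{p_{1}}(\Omega )$ yields a continuous linear functional $v\mapsto \int_{\Omega }fv\,dx$ on $\mathcal{V}$.

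For the boundary term, two Sobolev embeddings for the trace of $V\in \mathcal{V}$ are available: the trace embedding \eqref{trace}, $W^{1,p}(\Omega )\hookrightarrow L^{q_{s}}(\partial \Omega )$ with $q_{s}=p(N-1)/(N-p)$, whose dual yields $L^{q_{h}}(\partial \Omega )\hookrightarrow (W^{1,p}(\Omega ))'$ where $q_{h}=(q_{s})'$; and, when $q<N-1$, the intrinsic boundary embedding \eqref{qt}, $W^{1,q}(\partial \Omega )\hookrightarrow L^{q_{t}}(\partial \Omega )$ with $q_{t}=q(N-1)/(N-1-q)$, whose dual yields $L^{q_{k}}(\partial \Omega )\hookrightarrow (W^{1,q}(\partial \Omega ))'$ where $q_{k}=(q_{t})'$. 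A short computation shows that $q_{s}>q_{t}$ if and only if $q<p(N-1)/N$, equivalently $q_{h}<q_{k}$. In part (1), the hypothesis $q<p(N-1)/N$ combined with $p<N$ forces $q<N-1$ and $q_{k}>q_{h}$, so $q_{1}\geq q_{k}$ gives $L^{q_{1}}(\partial \Omega )\hookrightarrow L^{q_{h}}(\partial \Omega )\hookrightarrow \mathcal{V}'$ through the trace embedding. In part (2), $p<Nq/(N-1)$ is equivalent to $q>p(N-1)/N$, hence $q_{h}>q_{k}$, and the explicit hypothesis $q<N-1$ permits the use of the intrinsic boundary embedding: $q_{1}\geq q_{h}$ yields $L^{q_{1}}(\partial \Omega )\hookrightarrow L^{q_{k}}(\partial \Omega )\hookrightarrow \mathcal{V}'$. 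In either case, $g\in L^{q_{1}}(\partial \Omega )$ produces a continuous linear functional on $\mathcal{V}$.

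Combining the two estimates, the datum $(f,g)\in X^{p_{1},q_{1}}(\overline{\Omega },\mu )$ defines an element of $\mathcal{V}'$, and Theorem \ref{sol-1} then produces the unique weak solution $U\in \mathcal{V}$ to \eqref{eq-weak}. The main work is the routine bookkeeping that identifies $p_{h}=(p_{s})'$, $q_{h}=(q_{s})'$, and $q_{k}=(q_{t})'$ and verifies that the threshold $q=p(N-1)/N$ correctly selects between the trace and the intrinsic boundary embedding; there is no genuine obstacle beyond this exponent arithmetic.
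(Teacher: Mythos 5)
Your proposal is correct, and its overall skeleton is the same as the paper's: isolate the only place where $(p_{1},q_{1})$ enters the proof of Theorem \ref{sol-1} (namely the embedding $X^{p_{1},q_{1}}(\overline{\Omega },\mu )\hookrightarrow \mathcal{V}'$), re-derive that embedding from the sharper Sobolev/trace inequalities, and then quote the Browder argument and strict monotonicity from Lemma \ref{lem-hemi-2} verbatim. The one genuine difference is which boundary embedding you dualize in each part, and it is exactly the opposite of the paper's choice: for part (1) the paper uses the intrinsic embedding $W^{1,q}(\partial \Omega )\hookrightarrow L^{q_{t}}(\partial \Omega )$, whose dual exponent is precisely the stated $q_{k}=(q_{t})'$, while you use the trace embedding $W^{1,p}(\Omega )\hookrightarrow L^{q_{s}}(\partial \Omega )$ together with the comparison $q_{1}\geq q_{k}>q_{h}=(q_{s})'$ and the finiteness of $\sigma (\partial \Omega )$; for part (2) the paper uses the trace embedding (dual exponent exactly $q_{h}$), while you use the intrinsic one together with $q_{1}\geq q_{h}>q_{k}$. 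Both routings are legitimate because in each part the stated exponent dominates both dual exponents, so your extra step is only the exponent comparison $q_{s}\gtrless q_{t}\Leftrightarrow q\lessgtr p(N-1)/N$, which you verify correctly. What your variant buys is the observation that the hypotheses of the corollary are not sharp for the route you take (in part (1), $q_{1}\geq q_{h}$ would already suffice via the trace embedding, and symmetrically in part (2)); what the paper's matching of each part to the embedding whose dual exponent equals the stated threshold buys is that no comparison between $q_{h}$ and $q_{k}$ is needed at all. Incidentally, your chain $X^{p_{1},q_{1}}\hookrightarrow X^{p_{h},q_{k}}\hookrightarrow \mathcal{V}'$ is what the paper intends in part (1); its displayed $X^{p_{h},q_{h}}$ there is a typo.
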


\begin{proof}
We first prove (1). Let $1<p<N$ and $1<q<p(N-1)/N$ and let $p_{1}\geq p_{h}$
and $q_{1}\geq q_{k},$ where $p_{h}$ and $q_{k}$ are given by \eqref{p-q-h}.
Let $p_{s}:=Np/(N-p)$ and $q_{t}:=(N-1)q/(N-1-q)$. Since $W^{1,p}(\Omega
)\hookrightarrow L^{p_{s}}(\Omega )$ and $W^{1,q}(\partial \Omega
)\hookrightarrow L^{q_{t}}(\partial \Omega )$ with dense injection, then by
duality, 
$X^{p_{h},q_{k}}(\overline{\Omega },\mu )\hookrightarrow \mathcal{V}^{\prime
}$, 
where $\displaystyle1/p_{s}+1/p_{h}=1$ and $\displaystyle1/q_{t}+1/q_{k}=1$.
Since $\mu (\overline{\Omega })<\infty $, we have that 
\begin{equation*}
X^{p_{1},q_{1}}(\overline{\Omega },\mu )\hookrightarrow X^{p_{h},q_{h}}(%
\overline{\Omega },\mu )\hookrightarrow \mathcal{V}^{\prime }.
\end{equation*}%
Hence, for every $F:=(f,g)\in X^{p_{1},q_{1}}(\overline{\Omega },\mu
)\hookrightarrow \mathcal{V}^{\prime }$, there exists $U\in \mathcal{V}$
such that for every $V\in \mathcal{V}$, 
\begin{equation*}
\langle A(U),V\rangle =\mathcal{A}(U,V)=\int_{\Omega }fv\;dx+\int_{\partial
\Omega }gv\;d\sigma .
\end{equation*}%
The uniqueness of the weak solution follows again from the fact that $%
\mathcal{A}$ is strictly monotone.

In order to prove the second part, we use the the embeddings $W^{1,p}(\Omega
)\hookrightarrow L^{p_{s}}(\Omega )$, $W^{1,p}(\Omega )\hookrightarrow
L^{q_{s}}(\partial \Omega )$ and proceed exactly as above. We omit the
details.
\end{proof}

\subsection{Properties of the solution operator of the perturbed equation}

In the sequel, we establish some interesting properties of the solution
operator $A$ to problem \eqref{eq-weak}. We begin by assuming the following.

\begin{assumption}
\label{assump}Suppose that $\alpha _{j},$ $j=1,2,$ satisfy the following
conditions:%
\begin{equation}
\begin{cases}
\displaystyle\mbox{there are constants }\;c_{j}\in (0,1]\;\mbox{ such that }
\\ 
\displaystyle c_{j}\left\vert \alpha _{j}(x,\xi -\eta )\right\vert \leq
\left\vert \alpha _{j}(x,\xi )-\alpha _{j}(x,\eta )\right\vert \;%
\mbox{ for
all }\;\xi ,\eta \in \mathbb{R}.%
\end{cases}
\label{G}
\end{equation}
\end{assumption}

\begin{theorem}
\label{cor-existence} Assume Assumptions \ref{asump-51}, \ref{asump-52} and %
\ref{assump}. Let $p,q\geq 2$ and let $A:\;\mathcal{V}\rightarrow \mathcal{V}%
^{\prime }$ be the continuous and bounded operator constructed in the proof
of Theorem \ref{sol-1}. Then $A$ is injective and hence, invertible and its
inverse $A^{-1}$ is also continuous and bounded.
\end{theorem}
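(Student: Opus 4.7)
My plan is to deduce injectivity, boundedness, and continuity of $A^{-1}$ from a quantitative sharpening of the strict monotonicity of $\mathcal{A}$ that is available when $p,q\ge 2$. Injectivity is immediate: from the proof of Theorem \ref{sol-1} we already know $A(\mathcal{V}) = \mathcal{V}'$, and if $A(U_1) = A(U_2)$, testing $\mathcal{A}(U_1,V) = \mathcal{A}(U_2,V)$ at $V = U_1 - U_2$ and invoking the strict monotonicity of Lemma \ref{lem-hemi-2} forces $U_1 = U_2$. Hence $A^{-1}:\mathcal{V}'\to\mathcal{V}$ is well-defined.

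The heart of the argument is a quantitative monotonicity estimate. For $p,q\ge 2$, inequality \eqref{ine-BW} of Lemma \ref{lem:ab} applied separately to the $p$-Laplacian, the $|u|^{p-2}u$, the $q$-Laplace-Beltrami, and the $|u|^{q-2}u$ terms gives pointwise lower bounds of the form $c_p|\nabla(u_1-u_2)|^p$, $c_p|u_1-u_2|^p$, and similarly with $q$. For the nonlinear $\alpha_j$ contributions, the key observation is that both $\alpha_j(x,u_1) - \alpha_j(x,u_2)$ and $\alpha_j(x,u_1-u_2)$ share the sign of $u_1 - u_2$ (by oddness and monotonicity), so \eqref{G} upgrades to $c_j\,\alpha_j(x,|u_1-u_2|)\,|u_1-u_2| \le (\alpha_j(x,u_1) - \alpha_j(x,u_2))(u_1-u_2)$; the right-hand inequality in \eqref{delta-22} then bounds the left by $c_j\Lambda_j(x,u_1-u_2)$. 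Integrating all contributions yields a master estimate
\begin{equation*}
\langle A(U_1) - A(U_2),\, U_1 - U_2\rangle \ge c_p\|u_1-u_2\|_{W^{1,p}(\Omega)}^p + c_q\|u_1-u_2\|_{W^{1,q}(\partial\Omega)}^q + c_1\rho_{\Lambda_1}(u_1-u_2) + c_2\rho_{\Lambda_2}(u_1-u_2).
\end{equation*}

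Boundedness of $A^{-1}$ is then a direct consequence of the coercivity \eqref{coerc}: from $\mathcal{A}(U,U) = \langle F, U\rangle \le \|F\|_{\mathcal{V}'}\|U\|_{\mathcal{V}}$ and the fact that $\mathcal{A}(U,U)/\|U\|_{\mathcal{V}} \to \infty$ as $\|U\|_{\mathcal{V}} \to \infty$, bounded $F\in\mathcal{V}'$ forces bounded $U = A^{-1}(F)$. For continuity I take $F_n \to F$ in $\mathcal{V}'$ and set $U_n := A^{-1}(F_n)$, $U := A^{-1}(F)$; by what has just been shown $(U_n)$ is bounded in $\mathcal{V}$, so the master estimate applied at $(U_n, U)$ has right-hand side dominated by $\|F_n-F\|_{\mathcal{V}'}\|U_n-U\|_{\mathcal{V}} \to 0$. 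Each non-negative summand on its left must therefore vanish, giving at once $u_n \to u$ in both $W^{1,p}(\Omega)$ and $W^{1,q}(\partial\Omega)$, and $\rho_{\Lambda_j}(u_n - u) \to 0$ for $j=1,2$.

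The only delicate point will be upgrading this modular convergence to Luxemburg-norm convergence in $L_{\Lambda_j}$, which is needed to close $\|U_n - U\|_{\mathcal{V}} \to 0$. I plan to handle this via the $(\triangle_2^0)$-condition of Assumption \ref{asump-52}: iterating its defining inequality produces for any $k > 0$ a constant $C(k)$ with $\rho_{\Lambda_j}(v/k) \le C(k)\rho_{\Lambda_j}(v)$, so once $\rho_{\Lambda_j}(u_n-u) \le 1/C(k)$ one has $\rho_{\Lambda_j}((u_n-u)/k) \le 1$, i.e., the Luxemburg norm of $u_n - u$ is at most $k$. Since $k > 0$ is arbitrary, modular convergence implies Luxemburg norm convergence, and combining the four components completes the proof.
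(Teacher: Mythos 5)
Your proposal is correct, and its core coincides with the paper's: the quantitative monotonicity estimate you call the ``master estimate'' is exactly the paper's inequality \eqref{cool}, obtained in the same way from \eqref{ine-BW}, from the sign observation that turns \eqref{G} into $(\alpha_j(x,t)-\alpha_j(x,s))(t-s)\ge c_j\,\alpha_j(x,t-s)(t-s)$ (the paper's \eqref{cond-G}), and from the right-hand inequality of \eqref{delta-22}; injectivity and boundedness of $A^{-1}$ via the coercivity \eqref{coerc} are likewise identical. Where you genuinely diverge is the continuity of $A^{-1}$: the paper argues by contradiction, extracting a weakly convergent subsequence $U_n\rightharpoonup V$ in the reflexive space $\mathcal{V}$, passing to the limit in $\langle A(U_n)-A(V),U_n-V\rangle$ (strong times weak), and then identifying $V=U$ via the continuity and injectivity of $A$. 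You instead estimate directly $\langle A(U_n)-A(U),U_n-U\rangle=\langle F_n-F,U_n-U\rangle\le \Vert F_n-F\Vert_{\mathcal{V}'}\Vert U_n-U\Vert_{\mathcal{V}}\to 0$, using only the already-proved boundedness of $A^{-1}$; this avoids weak compactness, subsequences, and any appeal to the continuity of $A$, so it is both shorter and slightly more elementary. You also make explicit a step the paper leaves implicit: both arguments end with modular convergence $\rho_{\Lambda_j}(u_n-u)\to 0$, and upgrading this to Luxemburg-norm convergence (needed to conclude $\Vert U_n-U\Vert_{\mathcal{V}}\to 0$) requires the $(\triangle_2)$-condition; your iteration argument producing $\rho_{\Lambda_j}(v/k)\le C(k)\rho_{\Lambda_j}(v)$ and hence $\Vert u_n-u\Vert_{\Lambda_j}\le k$ once $\rho_{\Lambda_j}(u_n-u)\le 1/C(k)$ is the standard and correct way to fill this in.
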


\begin{proof}
First, we remark that, since%
\begin{equation*}
\left( \alpha _{j}(x,t)-\alpha _{j}(x,s)\right) (t-s)\geq 0\text{, for all }%
t,s\in \mathbb{R},
\end{equation*}%
for $\lambda _{N}$-a.e.$x\in \Omega $ if $j=1$ and $\sigma $-a.e. $x\in
\partial \Omega $ if $j=2$, it follows from (\ref{G}) that, for all $t,s\in 
\mathbb{R}$, 
\begin{equation}
\left( \alpha _{j}(x,t)-\alpha _{j}(x,s)\right) (t-s)\geq c_{j}\alpha
_{j}(x,t-s)\cdot (t-s).  \label{cond-G}
\end{equation}%
Let $U,V\in \mathcal{V}$ and $p,q\in \lbrack 2,\infty )$. Then, exploiting %
\eqref{ine-BW}, \eqref{cond-G} and the ($\triangle _{2}$)-condition, we
obtain%
\begin{align}  \label{cool}
&\langle A(U)-A(V),U-V\rangle =\mathcal{A}(U,U-V)-\mathcal{A}(V,U-V) \\
& =\int_{\Omega }\left( |\nabla u|^{p-2}\nabla u-|\nabla v|^{p-2}\nabla
v\right) \cdot \nabla (u-v)dx +\int_{\Omega }\left(
|u|^{p-2}u-|v|^{p-2}v\right) (u-v)dx  \notag \\
& +\int_{\Omega }\left( \alpha _{1}(x,u)-\alpha _{1}(x,v)\right) (u-v)dx
+\int_{\partial \Omega }\left( |\nabla _{\Gamma }u|^{q-2}\nabla _{\Gamma
}u-|\nabla _{\Gamma }v|^{q-2}\nabla _{\Gamma }v\right) \cdot \nabla _{\Gamma
}(u-v)d\sigma  \notag \\
& +\int_{\partial \Omega }\left( |u|^{q-2}u-|v|^{q-2}v\right) (u-v)d\sigma
+\int_{\partial \Omega }\left( \alpha _{2}(x,u)-\alpha _{2}(x,v)\right)
(u-v)d\sigma  \notag \\
& \geq \Vert u-v\Vert _{W^{1,p}(\Omega )}^{p}+c_{1}\int_{\Omega }\Lambda
_{1}(x,u-v)dx +\Vert u-v\Vert _{W^{1,q}(\partial \Omega
)}^{q}+c_{2}\int_{\partial \Omega }\Lambda _{2}(x,u-v)d\sigma .  \notag
\end{align}%
This implies that $\langle A(U)-A(V),U-V\rangle >0,$ for all $U,V\in 
\mathcal{V}$ with $U\neq V$ (that is, $u\neq v$, or $u|_{\partial \Omega
}\neq v|_{\partial \Omega }$). Therefore, the operator $A$ is injective and
hence, $A^{-1}$ exists. Since for every $U\in \mathcal{V}$, 
\begin{equation*}
\mathcal{A}(U,U)=\langle A(U),U\rangle \leq \Vert A(U)\Vert _{\mathcal{V}%
^{\prime }}\Vert U\Vert _{\mathcal{V}},
\end{equation*}%
from the coercivity of $\mathcal{A}$ (see (\ref{coerc})), it is not
difficult to see that%
\begin{equation}
\lim_{\Vert U\Vert _{\mathcal{V}}\rightarrow +\infty }\Vert A(U)\Vert _{%
\mathcal{V}^{\prime }}=+\infty .  \label{in-coe}
\end{equation}%
Thus, $A^{-1}:\;\mathcal{V}^{\prime }\rightarrow \mathcal{V}$ is bounded.

Next, we show that $A^{-1}:\;\mathcal{V}^{\prime }\rightarrow \mathcal{V}$
is continuous. Assume that $A^{-1}$ is not continuous. Then there is a
sequence $F_{n}\in \mathcal{V}^{\prime }$ with $F_{n}\rightarrow F$ in $%
\mathcal{V}^{\prime }$ and a constant $\delta >0$ such that 
\begin{equation}
\Vert A^{-1}(F_{n})-A^{-1}(F)\Vert _{\mathcal{V}}\geq \delta ,
\label{in-cont}
\end{equation}%
for all $n\in \mathbb{N}$. Let $U_{n}:=A^{-1}(F_{n})$ and $U=A^{-1}(F)$.
Since $\left\{ F_{n}\right\} $ is a bounded sequence and $A^{-1}$ is
bounded, we have that $\left\{ U_{n}\right\} $ is bounded in $\mathcal{V}$.
Thus, we can select a subsequence, which we still denote by $\left\{
U_{n}\right\} ,$ which converges weakly to some function $V\in \mathcal{V}$.
Since $A(U_{n})-A(V)\rightarrow F-A(V)$ strongly in $\mathcal{V}$ and $%
U_{n}-V$ converges weakly to zero in $\mathcal{V}$, we deduce%
\begin{equation}
\lim_{n\rightarrow \infty }\langle A(U_{n})-A(V),U_{n}-V\rangle =0.
\label{zero}
\end{equation}%
From \eqref{cool} and \eqref{zero}, it follows that 
\begin{equation*}
\lim_{n\rightarrow \infty }\Vert u_{n}-v\Vert _{W^{1,p}(\Omega )}=0\;%
\mbox{and }\;\lim_{n\rightarrow \infty }\int_{\Omega }\Lambda
_{1}(x,u_{n}-v)dx=0,
\end{equation*}%
while%
\begin{equation*}
\lim_{n\rightarrow \infty }\Vert u_{n}-v\Vert _{W^{1,q}(\partial \Omega
)}=0\;\mbox{ and }\;\lim_{n\rightarrow \infty }\int_{\partial \Omega
}\Lambda _{2}(x,u_{n}-v)d\sigma =0.
\end{equation*}%
Therefore, $U_{n}\rightarrow V$ strongly in $\mathcal{V}$. Since $A$ is
continuous and%
\begin{equation*}
F_{n}=A(U_{n})\rightarrow A(V)=F=A(U)
\end{equation*}%
it follows from the injectivity of $A,$ that $U=V$. This shows that 
\begin{equation*}
\lim_{n\rightarrow \infty }\Vert A^{-1}(F_{n})-A^{-1}(F)\Vert _{\mathcal{V}%
}=\lim_{n\rightarrow \infty }\Vert U_{n}-U\Vert _{\mathcal{V}}=0,
\end{equation*}%
which contradicts \eqref{in-cont}. Hence, $A^{-1}:\;\mathcal{V}^{\prime
}\rightarrow \mathcal{V}$ is continuous. The proof is finished.
\end{proof}

\begin{corollary}
Let the assumptions of Theorem \ref{cor-existence} be satisfied. Let $%
p_{h},q_{h}$ and $q_{k}$ be as in \eqref{p-q-h} and let $A:\;\mathcal{V}%
\rightarrow \mathcal{V}^{\prime }$ be the continuous and bounded operator
constructed in the proof of Theorem \ref{sol-1}.

\begin{enumerate}
\item If $2\leq p<N$, $2\leq q<p(N-1)/N$, $p_{1}\geq p_{h}$ and $q_{1}\geq
q_{k}$, then $A^{-1}:\;X^{p_{1},q_{1}}(\overline{\Omega },\mu )\rightarrow
X^{p_{s},q_{t}}(\overline{\Omega },\mu )$ is continuous and bounded.
Moreover, $A^{-1}:\;X^{p_{1},q_{1}}(\overline{\Omega },\mu )\rightarrow 
\mathcal{V}\cap X^{r,s}(\overline{\Omega },\mu )$ is compact for every $r\in
(1,p_s)$ and $s\in (1,q_s)$.

\item If $2\leq q<N-1$, $2\leq p<qN/(N-1)$, $p_{1}\geq p_{h}$ and $q_{1}\geq
q_{h}$, then the operator $A^{-1}:\;X^{p_{1},q_{1}}(\overline{\Omega },\mu
)\rightarrow X^{p_{s},q_{s}}(\overline{\Omega },\mu )$ is continuous and
bounded. Moreover, $A^{-1}:\;X^{p_{1},q_{1}}(\overline{\Omega },\mu
)\rightarrow \mathcal{V}\cap X^{r,s}(\overline{\Omega },\mu )$ is compact
for every $r\in (1,p_s)$ and $s\in (1,q_s)$.
\end{enumerate}
\end{corollary}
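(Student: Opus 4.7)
The plan is to deduce both parts from three ingredients already established: (i) the continuous, bounded, injective operator $A^{-1}:\mathcal{V}'\to\mathcal{V}$ from Theorem \ref{cor-existence}; (ii) the continuous embedding $X^{p_{1},q_{1}}(\overline{\Omega},\mu)\hookrightarrow\mathcal{V}'$ proved in the previous corollary, obtained by duality from the Sobolev and trace inequalities together with $\mu(\overline{\Omega})<\infty$; and (iii) a continuous Sobolev-type embedding of $\mathcal{V}$ into the target space $X^{p_{s},q_{t}}(\overline{\Omega},\mu)$ in case (1), respectively $X^{p_{s},q_{s}}(\overline{\Omega},\mu)$ in case (2). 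Once (iii) is verified, the continuity and boundedness of $A^{-1}$ between the asserted spaces will follow at once by composing the three maps.

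First I would verify (iii). For $U=(u,u|_{\partial\Omega})\in\mathcal{V}$, the Sobolev inequality \eqref{sob-ine1} gives $\|u\|_{p_{s},\Omega}\leq C\|u\|_{W^{1,p}(\Omega)}$. In case (1), the hypothesis $q<p(N-1)/N$ forces $q<N-1$, so the intrinsic boundary Sobolev inequality \eqref{qt} yields $\|u\|_{q_{t},\partial\Omega}\leq C\|u\|_{W^{1,q}(\partial\Omega)}$, whence $\mathcal{V}\hookrightarrow X^{p_{s},q_{t}}(\overline{\Omega},\mu)$. In case (2), a short computation shows the hypothesis $p<qN/(N-1)$ is equivalent to $q_{s}<q_{t}$, so the weaker boundary integrability $L^{q_{s}}(\partial\Omega)$ is already delivered by the trace inequality \eqref{trace} applied to $u\in W^{1,p}(\Omega)$, giving $\mathcal{V}\hookrightarrow X^{p_{s},q_{s}}(\overline{\Omega},\mu)$.

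For the compactness statement I would invoke the Rellich--Kondrachov theorem in $\Omega$ and its analogue on the Lipschitz boundary $\partial\Omega$: the embeddings $W^{1,p}(\Omega)\hookrightarrow L^{r}(\Omega)$ and $W^{1,q}(\partial\Omega)\hookrightarrow L^{s}(\partial\Omega)$, together with the compact trace $W^{1,p}(\Omega)\to L^{s}(\partial\Omega)$, are compact for every $r\in(1,p_{s})$ and $s\in(1,q_{s})$. A bounded sequence $\{F_{n}\}\subset X^{p_{1},q_{1}}(\overline{\Omega},\mu)$ is sent by the bounded operator $A^{-1}$ to a bounded sequence $\{U_{n}\}\subset\mathcal{V}$; extracting a subsequence which converges weakly in $\mathcal{V}$ and then, by the above compact embeddings, strongly in $X^{r,s}(\overline{\Omega},\mu)$ yields the asserted compactness into $\mathcal{V}\cap X^{r,s}(\overline{\Omega},\mu)$, where the intersection is understood with its $X^{r,s}$-topology (its $\mathcal{V}$-membership being automatic from the factorization through $\mathcal{V}$).

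The main, though essentially elementary, obstacle is the exponent bookkeeping needed to pick the correct boundary integrability in each of the two cases, and in particular to recognize that the thresholds $q<p(N-1)/N$ and $p<qN/(N-1)$ are precisely the ones at which the intrinsic boundary Sobolev embedding, respectively the trace embedding, determines the weaker of the two integrabilities on $\partial\Omega$. The verifications that $p_{h}$, $q_{h}$ and $q_{k}$ in \eqref{p-q-h} are the H\"older conjugates of $p_{s}$, $q_{s}$ and $q_{t}$, which ensure that $X^{p_{1},q_{1}}(\overline{\Omega},\mu)$ indeed embeds into $\mathcal{V}'$, are straightforward but should be recorded explicitly. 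Apart from this, the argument is a routine composition of continuous, bounded, and (where required) compact linear embeddings with the nonlinear homeomorphism $A^{-1}$.
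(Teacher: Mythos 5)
Your argument for the continuity and boundedness of $A^{-1}:X^{p_{1},q_{1}}(\overline{\Omega },\mu )\rightarrow X^{p_{s},q_{t}}(\overline{\Omega },\mu )$ (resp.\ $X^{p_{s},q_{s}}(\overline{\Omega },\mu )$) is exactly the paper's: compose $X^{p_{1},q_{1}}(\overline{\Omega },\mu )\hookrightarrow \mathcal{V}^{\prime }$, the continuous bounded map $A^{-1}:\mathcal{V}^{\prime }\rightarrow \mathcal{V}$ from Theorem \ref{cor-existence}, and the embedding of $\mathcal{V}$ into the target space; your exponent checks (e.g.\ that $p<qN/(N-1)$ is equivalent to $q_{s}<q_{t}$, and that $p_{h},q_{h},q_{k}$ are the conjugates of $p_{s},q_{s},q_{t}$) are correct. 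The gap is in the compactness statement. The conclusion asserts that $A^{-1}$ is compact as a map into $\mathcal{V}\cap X^{r,s}(\overline{\Omega },\mu )$, i.e.\ images of bounded sets must be relatively compact in the $\mathcal{V}$-norm as well, and this is what the paper proves. Your argument --- extract $U_{n_{k}}\rightharpoonup U$ weakly in $\mathcal{V}$ and push it through the compact embeddings $\mathcal{V}\hookrightarrow X^{r,s}(\overline{\Omega },\mu )$ --- only yields strong convergence in $X^{r,s}(\overline{\Omega },\mu )$; weak convergence in $\mathcal{V}$ can never be upgraded to norm convergence in $\mathcal{V}$ by an embedding of $\mathcal{V}$ into a larger space. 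You noticed this and quietly redefined the intersection to carry the $X^{r,s}$-topology, but that proves a strictly weaker statement than the one claimed.

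The missing idea is a duality transfer of compactness: since $\mathcal{V}\hookrightarrow X^{r,s}(\overline{\Omega },\mu )$ is compact, by duality the embedding $X^{r^{\prime },s^{\prime }}(\overline{\Omega },\mu )\hookrightarrow \mathcal{V}^{\prime }$ is compact for $r^{\prime }>p_{h}$, $s^{\prime }>q_{h}$; hence a bounded sequence $F_{n}$ in $X^{p_{1},q_{1}}(\overline{\Omega },\mu )$ has a subsequence converging \emph{strongly} in $\mathcal{V}^{\prime }$, and the continuity of $A^{-1}:\mathcal{V}^{\prime }\rightarrow \mathcal{V}$ (the substantive content of Theorem \ref{cor-existence}, resting on the monotonicity estimate \eqref{cool}) then gives $U_{n_{k}}=A^{-1}(F_{n_{k}})$ convergent strongly in $\mathcal{V}$. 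Note that because $A^{-1}$ is nonlinear, there is no way around using its continuity on $\mathcal{V}^{\prime }$ together with strong $\mathcal{V}^{\prime }$-convergence of the data (equivalently, one can run the estimate \eqref{cool} directly with $\langle A(U_{n})-A(U),U_{n}-U\rangle $, but this again requires $F_{n}\rightarrow F$ in $\mathcal{V}^{\prime }$, i.e.\ the compact dual embedding). Once strong convergence in $\mathcal{V}$ is in hand, the $X^{r,s}$-part follows either by your compact-embedding observation or, as in the paper, by interpolating between convergence in $X^{p,p}(\overline{\Omega },\mu )$ and boundedness in $X^{p_{s},q_{s}}(\overline{\Omega },\mu )$. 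With that duality step inserted, your proof closes; without it, the $\mathcal{V}$-component of the compactness claim is unproved.
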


\begin{proof}
We only prove the first part. The second part of the proof follows by
analogy and is left to the reader. Let $2\leq p<N$, $2\leq q<p(N-1)/N$, $%
p_{1}\geq p_{h}$ and $q_{1}\geq q_{k}$ and let $F\in X^{p_{1},q_{1}}(%
\overline{\Omega },\mu )$. Proceeding exactly as in the proof of Theorem \ref%
{cor-existence}, we obtain%
\begin{equation*}
\Vert A^{-1}(F)\Vert _{p_{s},q_{t}}\leq C_{1}\Vert A^{-1}(F)\Vert _{\mathcal{%
V}}\leq C\Vert F\Vert _{{\mathcal{V}^{\prime }}}\leq C_{2}\Vert F\Vert
_{p_{1},q_{1}}.
\end{equation*}%
Hence, the operator $A^{-1}:\;X^{p_{1},q_{1}}(\overline{\Omega },\mu
)\rightarrow X^{p_{s},q_{t}}(\overline{\Omega },\mu )$ is bounded. Finally,
using the facts that $X^{p_{1},q_{1}}(\overline{\Omega },\mu
)\hookrightarrow \mathcal{V}^{\prime }$, $A^{-1}:\;\mathcal{V}^{\prime
}\rightarrow \mathcal{V}$ is continuous and $\mathcal{V}\hookrightarrow
X^{p_{s},q_{t}}(\overline{\Omega },\mu )$, we easily deduce that $%
A^{-1}:\;X^{p_{1},q_{1}}(\overline{\Omega },\mu )\rightarrow X^{p_{s},q_{t}}(%
\overline{\Omega },\mu )$ is continuous.

Now, let $1<r<p_s$ and $1<s<q_s$. Since the injection $\mathcal{V}%
\hookrightarrow X^{r,s}(\overline{\Omega },\mu)$ is compact, then by
duality, the injection $X^{r^{\prime},s^{\prime}}(\overline{\Omega }%
,\mu)\hookrightarrow (\mathcal{V})^*$ is compact for every $%
r^{\prime}>p_s^{\prime}=p_h$ and $s^{\prime}>q_s^{\prime}=q_h$. This,
together with the fact that $A^{-1}:\; (\mathcal{V})^*\to \mathcal{V}$ is
continuous and bounded, imply that $A^{-1}:\; X^{p_1,q_1}(\overline{\Omega }%
,\mu)\to \mathcal{V}$ is compact for every $p_1>p_h$ and $q_1>q_h$.

It remains to show that $A^{-1}$ is also compact as a map into $X^{r,s}(%
\overline{\Omega },\mu)$ for every $r\in (1,p_s)$ and $s\in (1,q_s)$. Since $%
A^{-1}$ is bounded, we have to show that the image of every bounded set $%
\mathcal{B}\subset \mathbb{X}^{p_1,q_1}(\Omega,\mu)$ is relatively compact
in $X^{r,s}(\overline{\Omega },\mu)$ for every $r\in (1,p_s)$ and $s\in
(1,q_s)$. Let $U_n$ be a sequence in $A^{-1}(\mathcal{B})$. Let $%
F_n=A(U_n)\in\mathcal{B}$. Since $\mathcal{B}$ is bounded, then the sequence 
$F_n$ is bounded. Since $A^{-1}$ is compact as a map into $\mathcal{V}$, it
follows that there is a subsequence $F_{n_k}$ such that $A^{-1}(F_{n_k})\to
U\in \mathcal{V}$. We may assume that $U_n=A^{-1}(F_{n})\to U$ in $\mathcal{V%
}$ and hence, in $X^{p,p}(\overline{\Omega },\mu)$. It remains to show that $%
U_n\to U$ in $X^{r,s}(\overline{\Omega },\mu)$. Let $r\in [p,p_s)$ and $s\in
[p,q_s)$. Since $U_n:=(u_n,u_n|_{\partial \Omega})$ is bounded in $%
X^{p_s,q_s}(\overline{\Omega },\mu)$, a standard interpolation inequality
shows that there exists $\tau\in (0,1)$ such that 
\begin{equation*}
\||U_n-U_m\||_{r,s}\le\||U_n-U_m\||_{p,p}^\tau\||U_n-U_m\||_{p_s,q_s}^{1-%
\tau}\le C\||U_n-U_m\||_{p,p}^\tau.
\end{equation*}
As $U_n$ converges in $X^{p,p}(\overline{\Omega },\mu)$, it follows from the
preceding inequality that $U_n$ is a Cauchy sequence in $X^{r,s}(\overline{%
\Omega },\mu)$ and therefore converges in $X^{r,s}(\overline{\Omega },\mu)$.
Hence, $A^{-1}:\; X^{p_1,q_1}(\overline{\Omega },\mu)\to \mathcal{V}\cap
X^{r,s}(\overline{\Omega },\mu)$ is compact for every $r\in [p,p_s)$ and $%
s\in [p,q_s)$. The case $r,s\in (1,p)$ follows from the fact that $X^{p,p}(%
\overline{\Omega },\mu)\hookrightarrow X^{r,s}(\overline{\Omega },\mu)$ and
the proof is finished
\end{proof}

\subsection{Statement and proof of the main result}

We will now establish under what conditions the operator $A^{-1}$ maps $%
X^{p_{1},q_{1}}(\overline{\Omega },\mu )$ boundedly and continuously into $%
X^{\infty }(\overline{\Omega },\mu )$. The following is the main result of
this section.

\begin{theorem}
\label{sol-bounded} Let the assumptions of Theorem \ref{cor-existence} be
satisfied.

\begin{enumerate}
\item Suppose $2\leq p<N$ and $2\leq q<\infty $. Let 
\begin{equation*}
p_{1}>\frac{p_{s}}{p_{s}-p}=\frac{N}{p}\;\mbox{ and }\;q_{1}>\frac{q_{s}}{%
q_{s}-p}=\frac{N-1}{p-1}.
\end{equation*}%
Let $f\in L^{p_{1}}(\Omega ),\;g\in L^{q_{1}}(\partial \Omega )$ and $U,V\in 
\mathcal{V}$ be such that for every function $\Phi =(\varphi ,\varphi
|_{\partial \Omega })\in \mathcal{V}$, 
\begin{equation}
\mathcal{A}(U,\Phi )-\mathcal{A}(V,\Phi )=\int_{\Omega }f\varphi
\;dx+\int_{\partial \Omega }g\varphi \;d\sigma .  \label{eq-main}
\end{equation}%
Then there is a constant $C=C(N,p,q,\Omega )>0$ such that 
\begin{equation*}
\Vert |U-V\Vert |_{\infty }^{p-1}\leq C(\Vert f\Vert _{p_{1},\Omega }+\Vert
g\Vert _{q_{1},\partial \Omega }).
\end{equation*}

\item Suppose $2\leq p=q<N-1$. Let 
\begin{equation*}
p_{1}>\frac{p_{s}}{p_{s}-p}=\frac{N}{p}\;\mbox{ and }\;q_{1}>\frac{p_{t}}{%
p_{t}-p}=\frac{N-1}{p}.
\end{equation*}%
Let $f\in L^{p_{1}}(\Omega ),\;g\in L^{q_{1}}(\partial \Omega )$ and $U,V\in 
\mathcal{V}$ satisfy \eqref{eq-main}. Then there is a constant $%
C=C(N,p,q,\Omega )>0$ such that 
\begin{equation*}
\Vert |U-V\Vert |_{\infty }^{p-1}\leq C(\Vert f\Vert _{p_{1},\Omega }+\Vert
g\Vert _{q_{1},\partial \Omega }).
\end{equation*}
\end{enumerate}
\end{theorem}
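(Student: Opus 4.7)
The plan is to adapt the classical Moser--Stampacchia iteration to the Wentzell setting, using truncations of $w:=u-v$ as test functions in \eqref{eq-main}. For each $k\geq 0$, plug $\Phi_k:=((w-k)_+,(w-k)_+|_{\partial\Omega})\in\mathcal{V}$ into the identity, and denote $A(k):=\{x\in\Omega:w(x)>k\}$ and $B(k):=\{x\in\partial\Omega:w(x)>k\}$. The inequality \eqref{ine-BW} (applicable since $p,q\geq 2$) applied simultaneously to the $p$-Laplacian, $q$-Laplace--Beltrami and the pointwise $|\cdot|^{p-2}\cdot$, $|\cdot|^{q-2}\cdot$ terms, together with the non-negativity of the $\alpha_j$-contributions $(\alpha_j(x,u)-\alpha_j(x,v))(w-k)_+$ (by monotonicity), yields
\begin{equation*}
c\,\|(w-k)_+\|_{W^{1,p}(\Omega)}^p + c\,\|(w-k)_+\|_{W^{1,q}(\partial\Omega)}^q \leq \int_\Omega f(w-k)_+\,dx + \int_{\partial\Omega} g(w-k)_+\,d\sigma.
\end{equation*}

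Next, I would invoke the Sobolev embedding $W^{1,p}(\Omega)\hookrightarrow L^{p_s}(\Omega)$ together with the trace embedding $W^{1,p}(\Omega)\hookrightarrow L^{q_s}(\partial\Omega)$ in case (1), and the boundary Sobolev embedding $W^{1,q}(\partial\Omega)\hookrightarrow L^{q_t}(\partial\Omega)$ (with $q=p$) in case (2). Writing $r$ for the boundary exponent ($r=q_s$ in case (1), $r=q_t$ in case (2)) and
\begin{equation*}
M(k):=\|(w-k)_+\|_{L^{p_s}(\Omega)}+\|(w-k)_+\|_{L^{r}(\partial\Omega)},
\end{equation*}
Hölder's inequality on the right-hand side with exponents $\theta_1:=1-1/p_1-1/p_s$ and $\theta_2:=1-1/q_1-1/r$ (both strictly positive under the stated assumptions) gives
\begin{equation*}
M(k)^{p-1}\leq C\bigl(\|f\|_{p_1,\Omega}\,|A(k)|^{\theta_1}+\|g\|_{q_1,\partial\Omega}\,|B(k)|^{\theta_2}\bigr).
\end{equation*}

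To close the iteration, set $\psi(k):=|A(k)|^{1/p_s}+|B(k)|^{1/r}$, which is non-increasing with $\psi(k)\to 0$ as $k\to\infty$ thanks to Theorem \ref{cor-existence}, and note the elementary Chebyshev-type bound $(h-k)\,\psi(h)\leq 2M(k)$ for every $h>k$. Combining this with the previous display, and restricting to $k\geq k_\star$ with $\psi(k_\star)\leq 1$ so that the smaller power dominates, produces
\begin{equation*}
\psi(h)\leq \frac{C\,(\|f\|_{p_1,\Omega}+\|g\|_{q_1,\partial\Omega})^{1/(p-1)}}{h-k}\,\psi(k)^{\delta},\qquad \delta:=\frac{\min\{\theta_1 p_s,\,\theta_2 r\}}{p-1}.
\end{equation*}
A short arithmetic check shows $\delta>1$ precisely under the hypotheses $p_1>N/p$ and $q_1>(N-1)/(p-1)$ (respectively $q_1>(N-1)/p$ in case (2)), so Lemma \ref{lem:fallend} furnishes $d\leq C(\|f\|_{p_1,\Omega}+\|g\|_{q_1,\partial\Omega})^{1/(p-1)}$ with $\psi(k_\star+d)=0$, that is, $w\leq k_\star+d$ both $\lambda_N$-a.e.\ in $\Omega$ and $\sigma$-a.e.\ on $\partial\Omega$. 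Running the entire argument with $-w$ in place of $w$ delivers the matching lower bound and hence the stated $L^\infty$ estimate. The main obstacle will be the simultaneous bookkeeping of the two level-set measures $|A(k)|$ and $|B(k)|$ inside a single scalar iteration function $\psi$, and checking that the arithmetic $\theta_j\cdot(p_s\text{ or }r)/(p-1)>1$ lines up exactly with the sharp integrability thresholds on $(p_1,q_1)$; the second case with $p=q$ further requires swapping the trace embedding for the sharper boundary Sobolev embedding, which is precisely what permits the weaker condition $q_1>(N-1)/p$ there.
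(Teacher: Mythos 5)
Your proposal follows the same route as the paper's proof: test \eqref{eq-main} with truncations of $u-v$, use \eqref{ine-BW} together with the monotonicity of $\alpha _{j}(x,\cdot )$ to get coercivity of the left-hand side over the level set (this is the paper's estimate (\ref{calc*})), estimate the right-hand side by H\"{o}lder's inequality against the characteristic function of the level set combined with the embeddings $W^{1,p}(\Omega )\hookrightarrow L^{p_{s}}(\Omega )$, $W^{1,p}(\Omega )\hookrightarrow L^{q_{s}}(\partial \Omega )$ (resp. $W^{1,q}(\partial \Omega )\hookrightarrow L^{q_{t}}(\partial \Omega )$ in case (2)), and close by Stampacchia's Lemma \ref{lem:fallend}; your exponent arithmetic for $\theta _{1},\theta _{2}$ and $\delta >1$ matches the paper's thresholds. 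The bookkeeping differences are harmless: you use one-sided truncations $(w-k)_{+}$ and repeat with $-w$, whereas the paper uses the signed truncation $(|u-v|-k)^{+}\sgn (u-v)$ in one pass, and your scalar $\psi (k)=|A(k)|^{1/p_{s}}+|B(k)|^{1/r}$ is essentially the paper's $\Vert |\chi _{A_{k}}\Vert |_{p_{s},q_{s}}$.

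There is, however, one genuine loose end. You run the iteration only for $k\geq k_{\star }$, where $k_{\star }$ is defined by $\psi (k_{\star })\leq 1$, and conclude $|u-v|\leq k_{\star }+d$ with $d\leq C(\Vert f\Vert _{p_{1},\Omega }+\Vert g\Vert _{q_{1},\partial \Omega })^{1/(p-1)}$. But $k_{\star }$ as you introduce it depends on the pair $U,V$ and not on the data, so this does not yet give the stated estimate $\Vert |U-V\Vert |_{\infty }^{p-1}\leq C(\Vert f\Vert _{p_{1},\Omega }+\Vert g\Vert _{q_{1},\partial \Omega })$, which has no additive constant and must vanish as $(f,g)\rightarrow 0$. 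The paper avoids this by normalizing the characteristic functions with $C_{3}:=\Vert |1_{\overline{\Omega }}\Vert |_{p_{s},q_{s}}$ (see (\ref{111})--(\ref{444})), which makes the ``smaller power dominates'' step valid for every $k\geq 0$ with a constant depending only on $\Omega $, so Lemma \ref{lem:fallend} is applied from $k_{0}=0$ and the bound comes from $d$ alone. Within your framework the fix is easy but must be stated: testing at level $k=0$ and using H\"{o}lder with $|A(0)|,|B(0)|$ bounded by constants of $\Omega $ gives $M(0)^{p-1}\leq C(\Vert f\Vert _{p_{1},\Omega }+\Vert g\Vert _{q_{1},\partial \Omega })$, and your Chebyshev bound gives $\psi (k)\leq 2M(0)/k$, so one may take $k_{\star }:=2M(0)\leq C(\Vert f\Vert _{p_{1},\Omega }+\Vert g\Vert _{q_{1},\partial \Omega })^{1/(p-1)}$, after which $k_{\star }+d$ has the right form. (A minor remark: the decay $\psi (k)\rightarrow 0$ follows from Chebyshev and the integrability of $u-v$ alone; Theorem \ref{cor-existence} is not needed for that point.)
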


\begin{proof}
Let $U,V\in \mathcal{V}$ satisfy \eqref{eq-main}. Let $k\geq 0$ be a real
number and set 
\begin{equation*}
w_{k}:=(|u-v|-k)^{+}\func{sgn}(u-v)\; W_k:=(w_k,w_k|_{\partial\Omega}) \;%
\mbox{ and }\;w:=|u-v|.
\end{equation*}%
Let $A_{k}:=\{x\in \overline{\Omega }:|w(x)|\geq k\}$, and $%
A_{k}^{+}:=\{x\in \overline{\Omega }:w(x)\geq k\},\;\;A_{k}^{-}:=\{x\in 
\overline{\Omega }:w(x)\leq -k\}$. Clearly $W_k\in\mathcal{V}$ and $%
A_{k}=A_{k}^{+}\cup A_{k}^{-}$. We claim that there exists a constant $C>0$
such that%
\begin{equation}
C\mathcal{A}(W_{k},W_{k})\leq \mathcal{A}(U,W_{k})-\mathcal{A}(V,W_{k}),
\label{calc*}
\end{equation}%
for all $U,V\in \mathcal{V}$. Using the definition of the form $\mathcal{A}$%
, we have%
\begin{align}
& \mathcal{A}(U,W_{k})-\mathcal{A}(V,W_{k})  \label{calc1} \\
& =\int_{\Omega }(|\nabla u|^{p-2}\nabla u-|\nabla v|^{p-2}\nabla v)\cdot
\nabla w_{k}dx+\int_{\Omega }(|u|^{p-2}u-|v|^{p-2}v)w_{k}dx  \notag \\
& +\int_{\Omega }(\alpha _{1}(x,u)-\alpha _{2}(x,v))w_{k}dx+\int_{\partial
\Omega }(|u|^{q-2}u-|v|^{q-2}v)w_{k}d\sigma  \notag \\
& +\int_{\partial \Omega }(|\nabla _{\Gamma }u|^{p-2}\nabla _{\Gamma
}u-|\nabla _{\Gamma }v|^{p-2}\nabla _{\Gamma }v)\cdot \nabla _{\Gamma
}w_{k}d\sigma +\int_{\partial \Omega }(\alpha _{2}(x,u)-\alpha
_{2}(x,v))w_{k}d\sigma .  \notag
\end{align}%
Since%
$\nabla w_{k}=%
\begin{cases}
\nabla (u-v) & \mbox{ in }A(k), \\ 
0 & \mbox{ otherwise, }%
\end{cases}%
$ 
we can rewrite (\ref{calc1}) as follows:%
\begin{align}  \label{calc2}
& \mathcal{A}(U,W_{k})-\mathcal{A}(V,W_{k}) =\int_{A(k)\cap \Omega }(|\nabla
u|^{p-2}\nabla u-|\nabla v|^{p-2}\nabla v)\cdot \nabla (u-v)dx \\
&{}+\int_{A(k)\cap \partial \Omega }(|\nabla _{\Gamma }u|^{q-2}\nabla
_{\Gamma }u-|\nabla _{\Gamma }v|^{q-2}\nabla _{\Gamma }v)\cdot \nabla
_{\Gamma }(u-v)d\sigma  \notag \\
& {}+\lambda \int_{A(k)\cap \Omega }(|u|^{p-2}u-|v|^{p-2}v)w_{k}dx
+\int_{A(k)\cap \Omega }(\alpha _{1}(x,u)-\alpha _{1}(x,v))w_{k}dx  \notag \\
&{}+\int_{A(k)\cap \partial \Omega }(\alpha _{2}(x,u)-\alpha
_{2}(x,v))w_{k}d\sigma .  \notag
\end{align}%
Exploiting inequality \eqref{ine-BW}, from (\ref{calc2}) and (\ref{cond-G}),
we deduce%
\begin{align}
& \mathcal{A}(U,W_{k})-\mathcal{A}(V,W_{k})  \label{calc3} \\
& \geq \int_{A(k)\cap \Omega }\left( |\nabla w_{k}|^{p}+|w_{k}|^{p}\right)
dx+\int_{A(k)\cap \Omega }c_{1}\alpha _{1}(x,w_{k})w_{k}dx  \notag \\
& +\int_{A(k)\cap \Omega }(|u|^{p-2}uw_{k}-|v|^{p-2}vw_{k}-|w_{k}|^{p})dx 
\notag \\
& +\int_{A(k)\cap \Omega }(\alpha _{1}(x,u)-\alpha _{1}(x,v)-c_{1}\alpha
_{1}(x,w_{k}))w_{k}dx  \notag \\
& +\int_{A(k)\cap \partial \Omega }\left( |\nabla _{\Gamma
}w_{k}|^{q}+|w_{k}|^{q}\right) d\sigma +\int_{A(k)\cap \partial \Omega
}c_{2}\alpha _{2}(x,w_{k})w_{k}d\sigma  \notag \\
& +\int_{A(k)\cap \partial \Omega
}(|u|^{q-2}uw_{k}-|v|^{q-2}vw_{k}-|w_{k}|^{q})d\sigma  \notag \\
& +\int_{A(k)\cap \partial \Omega }(\alpha _{2}(x,u)-\alpha
_{2}(x,v)-c_{2}\alpha _{2}(x,w_{k}))w_{k}d\sigma  \notag \\
& \geq C\mathcal{A}(W_{k},W_{k}) +\int_{A(k)\cap \Omega
}(|u|^{p-2}uw_{k}-|v|^{p-2}vw_{k}-|w_{k}|^{p})dx  \notag \\
&{} +\int_{A(k)\cap \Omega }(\alpha _{1}(x,u)-\alpha _{1}(x,v)-c_{1}\alpha
_{1}(x,w_{k}))w_{k}dx  \notag \\
&{} +\int_{A(k)\cap \partial \Omega
}(|u|^{q-2}uw_{k}-|v|^{q-2}vw_{k}-|w_{k}|^{q})d\sigma  \notag \\
&{} +\int_{A(k)\cap \partial \Omega }(\alpha _{2}(x,u)-\alpha
_{2}(x,v)-c_{2}\alpha _{2}(x,w_{k}))w_{k}d\sigma ,  \notag
\end{align}%
where $c_{1},c_{2}$ are the constants from (\ref{cond-G}). Using (\ref{G})\
and the fact that $\alpha _{j}(x,\cdot )$ are strictly increasing, for $x\in
A_{k}^{+}$, we have%
\begin{align*}
c_{j}\alpha _{j}(x,w_{k}(x))& =c_{j}\alpha _{j}(x,u(x)-v(x)-k)\leq
c_{j}\alpha _{j}(x,u(x)-v(x)) \\
& \leq \alpha _{j}(x,u(x))-\alpha _{j}(x,v(x)).
\end{align*}%
Multiplying this inequality by $w_{k}(x)\geq 0,$ $x\in A_{k}^{+},$ yields%
\begin{equation}
(\alpha _{j}(x,u(x))-\alpha _{j}(x,v(x))-c_{j}\alpha
_{j}(x,w_{k}(x)))w_{k}(x)\geq 0.  \label{eq-pos1}
\end{equation}%
Similarly, for $x\in A_{k}^{-},$%
\begin{align*}
c_{j}\alpha _{j}(x,w_{k}(x))& =c_{j}\alpha _{j}(x,u(x)-v(x)+k)\geq
c_{j}\alpha _{j}(x,u(x)-v(x)) \\
& \geq \alpha _{j}(x,u(x))-\alpha _{j}(x,v(x)).
\end{align*}%
Hence, multiplying this inequality by $w_{k}(x)\leq 0$, we get%
\begin{equation}
(\alpha _{j}(x,u(x))-\alpha _{j}(x,v(x))-c_{j}\alpha
_{j}(x,w_{k}(x)))w_{k}(x)\geq 0,  \label{eq-pos2}
\end{equation}%
for all $x\in A_{k}^{-}$. Hence, on account of \eqref{eq-pos1} and %
\eqref{eq-pos2}, from (\ref{calc3}) we obtain the required estimate of (\ref%
{calc*}).

(a) To prove this part, note that from Definition \ref{def-form} it is clear
that, 
\begin{equation}
\Vert w_{k}\Vert _{W^{1,p}(\Omega )}^{p}\leq \mathcal{A}(W_{k},W_{k}).
\label{eq-norm}
\end{equation}%
Let $f\in L^{p_{1}}(\Omega )$ and $g\in L^{q_{1}}(\partial \Omega )$ with 
\begin{equation*}
p_{1}>\frac{p_{s}}{p_{s}-p}=\frac{N}{p}\;\mbox{ and }\;q_{1}>\frac{q_{s}}{%
q_{s}-p}=\frac{N-1}{p-1},
\end{equation*}%
and let $B\subset \overline{\Omega }$ be any $\mu $-measurable set. We claim
that there exists a constant $C\geq 0$ such that, for every $F\in
X^{p_{1},q_{1}}(\overline{\Omega },\mu )$ and $\varphi \in W^{1,p}(\Omega ),$
we have%
\begin{equation}
\Vert |F\varphi 1_{B}\Vert |_{1,1}\leq C\Vert |F\Vert |_{p_{1},q_{1}}\Vert
\varphi \Vert _{W^{1,p}(\Omega )}\Vert |\chi _{B}\Vert |_{p_{3},q_{3}},
\label{holder-ine-2}
\end{equation}%
where $p_{3}$ and $q_{3}$ are such that $1/p_{3}+1/p_{1}+1/p_{s}=1$ and $%
1/q_{3}+1/q_{1}+1/q_{s}=1$. In fact, note that if $n\in \mathbb{N}$ and $%
p_{i},$ $q_{i}\in \lbrack 1,\infty ],\;(i=1,\dots ,n)$ are such that%
\begin{equation*}
\sum_{i=1}^{n}\frac{1}{p_{i}}=\sum_{i=1}^{n}\frac{1}{q_{i}}=1,
\end{equation*}%
and, if $F_{i}\in X^{p_{i},q_{i}}(\overline{\Omega },\mu ),\;(i=1,\dots ,n)$%
, then by H\"{o}lder's inequality,%
\begin{equation}
\Vert |\prod_{i=1}^{n}F_{i}\Vert |_{1,1}\leq \prod_{i=1}^{n}\Vert
|F_{i}\Vert |_{p_{i},q_{i}}.  \label{holder-ine}
\end{equation}%
Since $W^{1,p}(\Omega )\hookrightarrow X^{p_{s},q_{s}}(\overline{\Omega }%
,\mu )$, \eqref{holder-ine-2} follows immediately from \eqref{holder-ine}
and the claim (\ref{holder-ine-2}) is proved. Next, it follows from (\ref%
{holder-ine-2}), that%
\begin{eqnarray*}
\int_{\overline{\Omega }}FW_{k}d\mu =\Vert |FW_{k}\Vert |_{1,1} &=&\Vert
|FW_{k}\chi _{A_{k}}\Vert |_{1,1} \\
&\leq &\Vert |F\Vert |_{p_{1},q_{1}}\Vert |w_{k}\Vert |_{W^{1,p}(\Omega
)}\Vert |\chi _{A_{k}}\Vert |_{p_{3},q_{3}},
\end{eqnarray*}%
where we recall that $1/p_{3}=\left( 1-1/p_{s}-1/p_{1}\right) >\left(
p-1\right) /p_{s}$ and $q_{3}<q_{s}/(p-1)$. Therefore, for every $k\geq 0$,%
\begin{equation*}
\mathcal{A}(U,W_{k})-\mathcal{A}(V,W_{k})\leq \Vert |F\Vert
|_{p_{1},q_{1}}\Vert w_{k}\Vert _{W^{1,p}(\Omega )}\Vert |\chi _{A_{k}}\Vert
|_{p_{3},q_{3}},
\end{equation*}%
which together with estimate (\ref{calc*})\ yields the desired inequality 
\begin{equation}
C\mathcal{A}(W_{k},W_{k})\leq \mathcal{A}(U,W_{k})-\mathcal{B}(V,W_{k})\leq
\Vert |F\Vert |_{p_{1},q_{1}}\Vert w_{k}\Vert _{W^{1,p}(\Omega )}\Vert |\chi
_{A_{k}}\Vert |_{p_{3},q_{3}},  \label{eq:fg-k-1}
\end{equation}%
It follows from \eqref{eq-norm} and \eqref{eq:fg-k-1}, that for every $k>0$, 
\begin{align*}
C\Vert w_{k}\Vert _{W^{1,p}(\Omega )}^{p}& \leq \mathcal{A}(W_{k},W_{k})\leq 
\mathcal{A}(U,W_{k})-\mathcal{A}(V,W_{k}) \\
& \leq \Vert |F\Vert |_{p_{1},q_{1}}\Vert w_{k}\Vert _{W^{1,p}(\Omega
)}\Vert |\chi _{A_{k}}|\Vert _{p_{3},q_{3}}.
\end{align*}%
Hence, for every $k>0$, $\Vert w_{k}\Vert _{W^{1,p}(\Omega )}^{p-1}\leq
C_{1}\Vert |\chi _{A_{k}}\Vert |_{p_{3},q_{3}}$. Using the fact $%
W^{1,p}(\Omega )\hookrightarrow X^{p_{s},q_{s}}(\overline{\Omega },\mu )$,
we obtain for every $k>0$, that%
\begin{equation*}
\Vert |w_{k}\Vert |_{p_{s},q_{s}}^{p-1}\leq C\Vert |F\Vert
|_{p_{1},q_{1}}\Vert |\chi _{A_{k}}\Vert |_{p_{3},q_{3}}.
\end{equation*}%
Let $h>k$. Then $A_{h}\subset A_{k}$ and on $A_{h}$ the inequality $%
|w_{k}|\geq \left( h-k\right) $ holds. Therefore, 
\begin{equation*}
\Vert |(h-k)\chi _{A_{h}}\Vert |_{p_{s},q_{s}}^{p-1}\leq C\Vert |F\Vert
|_{p_{1},q_{1}}\Vert |\chi _{A_{k}}\Vert |_{p_{3},q_{3}},
\end{equation*}%
which shows that%
\begin{equation}
\Vert |\chi _{A_{h}}\Vert |_{p_{s},q_{s}}^{p-1}\leq C\Vert |F\Vert
|_{p_{1},q_{1}}(h-k)^{-(p-1)}\Vert |\chi _{A_{k}}\Vert |_{p_{3},q_{3}}.
\label{333}
\end{equation}%
Let $C_{3}:=\Vert |1_{\overline{\Omega }}\Vert |_{p_{s},q_{s}}$, and%
\begin{equation*}
\delta :=\min \left\{ \frac{p_{s}}{p_{3}},\frac{q_{s}}{p_{3}}\right\}
>p-1,\;\delta _{0}:=\frac{\delta }{p-1}>1.
\end{equation*}%
Then 
\begin{align}
\Vert C_{3}^{-p_{s}/p_{3}}\chi _{A_{k}}\Vert _{\Omega ,p_{3}}& =\Vert
C_{3}^{-1}\chi _{A_{k}}\Vert _{\Omega ,p_{s}}^{p_{s}/p_{3}}\leq \Vert
C_{3}^{-1}\chi _{A_{k}}\Vert _{\Omega ,p_{s}}^{\delta }  \label{111} \\
& \leq \Vert |\chi _{A_{k}}\Vert |_{p_{s},q_{s}}^{\delta }C_{3}^{-\delta } 
\notag
\end{align}%
and 
\begin{align}
\Vert C_{3}^{-q_{s}/q_{3}}\chi _{A_{k}}\Vert _{\partial \Omega ,q_{3}}&
=\Vert C_{3}^{-1}\chi _{A_{k}}\Vert _{\partial \Omega
,q_{s}}^{q_{s}/q_{3}}\leq \Vert C_{3}^{-1}\chi _{A_{k}}\Vert _{\partial
\Omega ,q_{s}}^{\delta }  \label{222} \\
& \leq \Vert |\chi _{A_{k}}\Vert |_{p_{s},q_{s}}^{\delta }C_{3}^{-\delta }. 
\notag
\end{align}%
Choosing $C_{\Omega }:=C_{3}^{p_{s}/p_{3}-\delta }+C_{3}^{q_{s}/q_{3}-\delta
}$, from (\ref{111})-(\ref{222}) we have%
\begin{equation}
\Vert |\chi _{A_{k}}\Vert |_{p_{3},q_{3}}\leq C_{\Omega }\Vert |\chi
_{A_{k}}\Vert |_{p_{s},q_{s}}^{\delta }.  \label{444}
\end{equation}%
Therefore, combining (\ref{333}) with (\ref{444}), we get%
\begin{align}
\Vert |\chi _{A_{h}}\Vert |_{p_{s},q_{s}}^{p-1}& \leq C\Vert |F\Vert
|_{p_{1},q_{1}}(h-k)^{-(p-1)}\Vert |\chi _{A_{k}}\Vert
|_{p_{s},q_{s}}^{\delta }  \label{555} \\
& =C\Vert |F\Vert |_{p_{1},q_{1}}(h-k)^{-(p-1)}\left[ \Vert |\chi
_{A_{k}}\Vert |_{p_{s},q_{s}}^{p-1}\right] ^{\delta _{0}}.  \notag
\end{align}%
Setting $\psi (h):=\Vert |\chi _{A_{h}}\Vert |_{p_{s},q_{s}}^{p-1}$ in Lemma %
\ref{lem:fallend}, on account of (\ref{555}), we can find a constant $C_{2}$
(independent of $F$) such that%
\begin{equation*}
\Vert |\chi _{A_{K}}\Vert |_{p_{s},q_{s}}^{p-1}=0\;\text{ with }%
\;K:=C_{2}\Vert |F\Vert |_{p_{1},q_{1}}^{1/(p-1)}.
\end{equation*}%
This shows that $\mu (A_{K})=0,$ where $A_{K}=\{x\in \overline{\Omega }%
:|(u-v)(x)|\geq K\}$. Hence, we have $|u-v|\leq K,$ $\mu $-a.e. on $%
\overline{\Omega }$ so that 
\begin{equation*}
\Vert |U-V\Vert |_{\infty }^{p-1}\leq C_{2}\Vert |F\Vert
|_{p_{1},q_{1}}=C_{2}\left( \Vert f\Vert _{p_{1},\Omega }+\Vert g\Vert
_{q_{1},\partial \Omega }\right) ,
\end{equation*}%
which completes the proof of part (a).

(b) To prove this part, instead of \eqref{eq-norm} and \eqref{holder-ine-2},
one uses 
$\Vert W_{k}\Vert _{\mathcal{V}_{1}}^{p}\leq \mathcal{A}(W_{k},W_{k})$ 
and 
$\Vert |F\varphi 1_{B}\Vert |_{1,1}\leq C\Vert |F\Vert |_{p_{1},q_{1}}\Vert
\varphi \Vert _{W^{1,p}(\Omega )}\Vert |\chi _{B}\Vert |_{p_{3},q_{3}}$, 
(where $p_{3}$ and $q_{3}$ are such that $1/p_{3}+1/p_{1}+1/p_{s}=1$ and $%
1/q_{3}+1/q_{1}+1/p_{t}=1$) and the embedding $\mathcal{V}\hookrightarrow 
\mathcal{V}_{1}\hookrightarrow X^{p_{s},p_{t}}(\overline{\Omega },\mu )$.
The remainder of the proof follows as in the proof of part (a).
\end{proof}

We conclude this section with the following example.

\begin{example}
Let $p\in \lbrack 2,\infty )$, $b:\partial \Omega \rightarrow (0,\infty )$
be a strictly positive and $\sigma $-measurable function and let 
\begin{equation*}
\beta (x,\xi ):=b(x)|\xi |^{p-2}\xi ,\quad \xi \in \mathbb{R}.
\end{equation*}%
Then, it is easy to verify that $\beta $ satisfies Assumptions \ref{asump-51}%
, \ref{asump-52} and \ref{assump} (see, e.g., \cite[Example 4.17]{BW}).
\end{example}


\begin{thebibliography}{99}
\bibitem{Adam} R.~A.~Adams. \emph{Sobolev Spaces}. Pure and Applied
Mathematics, Vol. 65. Academic Press, New York, 1975.

\bibitem{ADN} S. Agmon, A. Douglis and L. Nirenberg. \emph{Estimates near
the boundary for solutions of elliptic partial differential equations
satisfying general boundary conditions I}. Comm. Pure Appl. Math. \textbf{12}
(1959), 623--727.

\bibitem{Gui} H.~Attouch, G.~Buttazo and G.~ Michaille. \emph{\ Variational
Analysis in Sobolev and BV Spaces}. Applications to PDEs and optimization.
MPS/SIAM Series on Optimization, 6. Society for Industrial and Applied
Mathematics (SIAM), Philadelphia, PA, 2006.

\bibitem{BC} Ph.~B\'enilan and M.G.~Crandall. \emph{Completely accretive
operators}. Semigroup theory and evolution equations (Delft, 1989), 41--75,
Lecture Notes in Pure and Appl. Math., 135, Dekker, New York, 1991.

\bibitem{BW} M.~Biegert and M.~Warma. \emph{The heat equation with nonlinear
generalized Robin boundary conditions}. J. Differential Equations \textbf{247%
} (2009), 1949---1979.

\bibitem{BW2} M.~Biegert and M.~Warma. \emph{Some Quasi-linear elliptic
Equations with inhomogeneous generalized Robin boundary conditions on
``bad'' domains}. Adv. Differential Equations \textbf{15} (2010), 893---924.

\bibitem{Br73} H.~Br\'ezis. \emph{Op\'erateurs Maximaux Monotones et
Semi-groupes de Contractions dans les Espaces de Hilbert}. American Elsevier
Publishing Co., Inc., New York, 1973.

\bibitem{BH} H.~Br\'ezis and A.~Haraux. \emph{Image d'une somme d'op\'{e}%
rateurs monotones et applications}. Israel J. Math. \textbf{23} (1976),
165--186.

\bibitem{BN} H.~Br\'{e}zis and L.~Nirenberg. \emph{Image d'une somme d'op%
\'{e}rateurs non lin\'{e}aires et applications}. C. R. Acad. Sci. Paris S%
\'{e}r. A-B \textbf{284} (1977), no. 21, A1365--A1368.

\bibitem{Bene} E.~DiBenedetto. \emph{Degenerate Parabolic Equations}.
Springer, New York, 1993.


\bibitem{DM} P.~Dr\'abek and J.~Milota. \emph{Methods of Nonlinear Analysis.
Applications to Differential Equations}. Birkh\"auser Advanced Texts,
Birkh\"auser, Basel, 2007.


\bibitem{GGGR} C.G.~Gal, G.R.~Goldstein, J.A.~Goldstein, S.~Romanelli and
M.~Warma. \emph{Fredholm alternative, semilinear eliptic problems, and
Wentzell boundary conditions}. Preprint.


\bibitem{GM} C.G.~Gal, M.~Grasselli and A.~Miranville. \emph{Nonisothermal
Allen-Cahn equations with coupled dynamic boundary conditions}. Nonlinear
phenomena with energy dissipation, GAKUTO Internat. Ser. Math. Sci. Appl., 
\textbf{29} (2008), 117---139.

\bibitem{GM2} C.G.~Gal and A.~Miranville. \emph{Uniform global attractors
for non-isothermal viscous and non-viscous Cahn--Hilliard equations with
dynamic boundary conditions}. Nonlinear Analysis: Real World Applications 
\textbf{10} (2009), 1738--1766.

\bibitem{G} J.A.~Goldstein. \emph{Nonlinear Semigroups}. 
\newblock{Lecture
Notes}.


\bibitem{H} L.~H\"{o}rmander. \emph{Linear Partial Differential Operators}.
Springer-Verlag, Berlin, 1976.

\bibitem{LL} E.M.~Landesman and A.C.~Lazer. \emph{Nonlinear perturbations of
linear elliptic boundary value problems at resonance}. J. Math. Mech. 
\textbf{19} (1969/1970), 609--623.

\bibitem{Maz85} V.~G.~Maz'ya. \emph{\emph{Sobolev Spaces}}. Springer-Verlag,
Berlin, 1985.

\bibitem{MP} V.G.~Maz'ya and S.V.~Poborchi. \emph{Differentiable Functions
on Bad Domains}. World Scientific Publishing, 1997.

\bibitem{Min1} G.~J.~Minty. \emph{Monotone (nonlinear) operators in Hilbert
space}. Duke Math. J. \textbf{29} (1962), 341--346.

\bibitem{Min2} G.~J.~Minty. \emph{On the solvability of nonlinear functional
equations of monotonic type}. Pacific J. Math. \textbf{14} (1964), 249--255.

\bibitem{MS} M.~K.~V.~Murthy and G.~Stampacchia. \emph{Boundary value
problems for some degenerate-elliptic operators}. Ann. Mat. Pura Appl. 
\textbf{80} (1968), 1--122.

\bibitem{necas} J.~Ne\v cas. \emph{Les M\'ethodes Directes en Th\'eorie des
\'Equations Elliptiques}. Masson et Cie, \'Editeurs, Paris; Academia,
\'Editeurs, Prague, 1967.

\bibitem{Pe} J.~Peetre. \emph{Another approach to elliptic boundary value
problems}. Comm. Pure Appl. Math. \textbf{14} (1961), 711--731.

\bibitem{RR-1} M.~M.~Rao and Z.~D.~Ren. \emph{Theory of Orlicz Spaces}.
Monographs and Textbooks in Pure and Applied Mathematics, 146. Marcel
Dekker, Inc., New York, 1991.

\bibitem{RR} M.~M.~Rao and Z.~D.~Ren. \emph{Applications of Orlicz Spaces}.
Monographs and Textbooks in Pure and Applied Mathematics, 250. Marcel
Dekker, Inc., New York, 2002.

\bibitem{Scho} R.~E.~Showalter. \emph{Monotone Operators in Banach Space and
Nonlinear Partial Differential Equations.} Amer. Math. Soc., Providence, RI,
1997.

\bibitem{V} M.I.~Vishik. \emph{On general boundary problems for elliptic
differential equations}. Trudy Moskow. Math. Obsc. \textbf{1} (1952),
187--246.


\bibitem{W2} M.~Warma. \emph{The Robin and Wentzell-Robin Laplacians on
Lipschitz domains}. Semigroup Forum \textbf{73} (2006), 10--30.
\end{thebibliography}
\end{document}